\documentclass[12pt, a4paper]{amsart}
\usepackage{amsfonts,amsmath, amsthm, amssymb, amscd}
\usepackage{latexsym}
\usepackage{tkz-euclide}
\input{amssym.def}
\input{amssym}
\usepackage{hyperref}
\usepackage{aligned-overset}
\usepackage{tikz}
\numberwithin{figure}{section}
\usetikzlibrary{arrows}

\newtheorem{defi}{Definition}[section]
\newtheorem{satz}[defi]{Theorem}
\newtheorem{prop}[defi]{Proposition}

\newtheorem{lemma}[defi]{Lemma}
\newtheorem{bem}[defi]{Remark}
\newtheorem{folg}[defi]{Corollary}

\newcommand {\N}{\mathbb{N}} 
\newcommand {\R}{\mathbb{R}} 
\newcommand {\C}{\mathbb{C}} 

\DeclareMathOperator{\vol}{vol}

\DeclareMathOperator{\grad}{grad}

\begin{document}
\title{The shifted Wave equation on non flat harmonic manifolds }

\author{Oliver Brammen}
\date{\today}
\address{Faculty of Mathematics,
Ruhr University Bochum, 44780 Bochum, Germany}
\email{oliver.brammen@rub.de}
\thanks{The author would like to thank Gerhard Knieper and Norbert Peyerimhoff for their support, helpful comments and advise. The author is partially supported by the German Research Foundation (DFG), CRC TRR 191, Symplectic structures in geometry, algebra and dynamics.}

\begin{abstract}
We solve the shifted wave equation 
\begin{align*}
\frac{\partial^2}{\partial t^2}\varphi(x,t)=(\Delta_x+\rho^2)\varphi(x,t)
\end{align*}
on a non compact simply connected harmonic manifold with mean curvature of the horospheres $2\rho>0$. We give an explicit representation of the solution as the inverse dual Abel transform of the spherical means of there initial conditions using the local injectivity of the Abel transform and symmetry properties of the spherical mean value operator. Furthermore we investigate the wave equation using the Fourier transform on harmonic manifolds of rank one. 
Additionally we show an analogous of the classical Paley-Wiener theorem and use it to show an asymptotic Huygens principle as well as asymptotic equidistribution of the energy of a solution of the shifted wave equation under assumptions on the $\mathbf{c}$-function.

\end{abstract}


\maketitle

\section{Introduction}
 In their paper \cite{Anker2013} the authors solved the shifted wave equation on Damek-Ricci spaces explicitly. These spaces  together with Euclidean and hyperbolic spaces, provide all known examples of non compact simply connected harmonic manifolds. 
 A harmonic manifold is a complete Riemannian manifold $(X,g)$  such that for all $p\in X$ the volume density function in geodesic polar coordinates $\sqrt{g_{ij}(p)}=\theta_q(p)$ only depends on the geodesic distance. The Euclidean and non flat symmetric spaces of rank one are harmonic. It was a long standing conjecture that all harmonic manifolds are of this type, referred to as the Lichnerowicz conjecture \cite{Lich}. The conjecture was proven  for compact simply connected spaces by Szabo\cite{szabo1990} but shortly after this, in 1992  Damek and Ricci \cite{Damek_1992} provided for dimension 7 and higher a class of homogeneous harmonic spaces that are non symmetric. These  manifolds are called Damek-Ricci spaces.  In 2006 Heber \cite{Heber_2006} showed that all homogeneous non compact simply connected harmonic spaces are of the type mentioned above. Since these spaces have a rich algebraic structure one obtains tools from harmonic analysis, see \cite{helgason1994geometric} and \cite{fourierNA2}. In \cite{Biswas2019} the authors showed that one can obtain these tools without the assumption of homogeneity by assuming purely exponential volume growth or equivalently rank one. Furthermore in \cite{PS15} the authors showed that tools like the Abel transform and its dual are accessible without the assumption of rank one. We now use their methods to generalise the results from \cite{Anker2013}. The idea of the proof is identical: We use the symmetries of the mean value operator to express the solution of the shifted wave equation via the inverse dual Abel transform of spherical means of its initial conditions.\\
In section 2 we provide all the generalities on harmonic manifolds needed for this discussion. In section 3 we recall important properties of the Abel transform and its dual form \cite{PS15}, and in section 4 we show the symmetry of the spherical mean operator before solving the wave equation with smooth compactly supported initial conditions explicitly  in section 5.
  In section 6 we investigate the wave equation under the additional assumption that  $X$ is of rank one and thereby obtain a similar results as in \cite{AMBP_2010__17_2_327_0}. 
To conduct this investigation we will use the Fourier transform on $X$. For this purpose we give a brief overview over the Fourier transform on harmonic manifolds of rank one 
and look at the action of the Laplacian under Fourier transform. Then in section 7 we in particular generalise the Paley-Wiener type theorem from  \cite{AMBP_2010__17_2_327_0}  and use it to obtains bounds on the energy of a solution of the shifted wave equation on $X$ under assumptions on the initial conditions. In section 8 we improve the result form the previous section by showing an analogous of the classical Paley-Wiener theorem on harmonic manifolds of rank one, generalising the results from \cite{helgason1994geometric} and \cite{astengocamporesiblasio1997}  for symmetric and non symmetric Damek-Ricci spaces respectively. The main idea of the proof of this theorem is to use the Radon transform from  \cite{Rouvire2021} to translate the problem to the real line. We then use this to obtain an asymptotic Huygens principle (section 9) and asymptotic equidistribution of energy (section 10). Under the assumption that the $\mathbf{c}$-function of $X$ has a polynomial holomorphic extension into a strip on the upper half plane in $\C$ with the first pole of multiplicity one. This generalises the results  of symmetric spaces (\cite{BRANSON1991403},\cite{HELGASON1992279}, \cite{OLAFSSON1992270},\cite{Branson1995},\cite{BRANSON2005429}), non symmetric Damek-Ricci spaces (\cite{AMBP_2010__17_2_327_0}) and gives a non radial version of the results in \cite{AMBP_2005__12_1_147_0}.
\section{Preliminaries}
In this section we give a brief introduction into non compact simply connected harmonic manifolds.
 For more information 
 we refer the reader to the surveys \cite{kreyssig2010introduction} and \cite{knieper22016}.
 Let $(X,g)$ be a non compact simply connected Riemannian manifold without conjugate points. Denote by $C^k(X)$ the space of $k$-times differentiable functions on $X$ and by $C^k_c(X)\subset C^k(X)$ those with compact support. With the usual conventions for continuous, smooth and analytic functions. Furthermore for $x\in X$ denote by $C^k(X,x)$ the functions in $C^k_c(X)$ radial around $x$ i.e $f\in C^k(X,x)$ if there exists a even function $u\in C_{\text{even}}^k(\R)$ on $\R$ such that $f=u\circ d(x,\cdot)$ where $d:X\times X\to\R_{\geq 0}$ is the distance induced by $g$. Furthermore for $p\geq 1$ $L^p(X)$ refers to the $L^p$-space of $X$ with regards to the measure induced by the metric and integration over a manifold is always be interpreted as integration with respect to the canonical measure on this manifold unless stated otherwise.
   For $p\in X$ and $v\in S_pM$ denote by $c_v :\R\to M$ the unique unit speed geodesic with $c(0)=p$ and $\dot{c}(0)=v$. Define $A_v$ to be the Jacobi tensor along $c_v$ with initial conditions $A_v(0)=0$ and $A^{\prime}(0)=\operatorname{id}$. For details on Jacobi tensors see \cite{KNIEPER2002453}. Then using the transformation formula and the Gauss lemma the volume of the sphere of radius $r$ around $p$ is given by:
\begin{align}\label{eq:volS}
\operatorname{vol}S(p,r)=\int_{S_p M}\operatorname{det} A_v(r)\,dv.
\end{align}

The second fundamental form of $S(p,r)$ is given by $A_v^{\prime}(r)A^{-1}_v(r)$ and the mean curvature by 
\begin{align}\label{eq:meancurv}
\nu_p(r,v)=\operatorname{trace}A_v^{\prime}(r)A_v^{-1}(r).
\end{align}

\begin{defi}
Let $(X,g)$ be a complete non compact simply connected manifold without conjugate points and $SX$ its unit tangent bundle. For $v\in SX$ let $A_v(t)$ be the Jacobi tensor with initial conditions $A_v(0)=0$ and $A^{\prime}_v(0)=\operatorname{id}$. Then $X$ is said to be harmonic if and only if
$$A(r)=\operatorname{det}(A_v(r))\quad\forall v\in SX.$$
Hence the volume growth of a geodesic ball centred at $\pi(v)$ only depends on its radius.
\end{defi}
From  (\ref{eq:meancurv}) one easily concludes that the definition above is equivalent to the mean curvature of geodesic spheres only depending on the radius. More precise the mean curvature of a geodesic sphere $S(x,r)$  of radius $r$ around a point $x\in X$ is given by $\frac{A'(r)}{A(r)}$. 

Using $A_v$ one can construct the Jacobi tensor $S_{v,r}$ along $c_v$  with $S_{v,r}(0)=\operatorname{id}$, $S_{v,r}(r)=0$, and $U_{v,r}=S_{v,-r}$.

Then the stable respectively unstable Jacobi tensor is obtained via the limiting process: 
\begin{align*}
S_v&=\lim\limits_{r\to\infty}S_{v,r}\\
U_v&=\lim\limits_{r\to \infty} U_{v,r}.
\end{align*}
Note that these limits exist \cite{KNIEPER2002453}.

Let $v\in S_p X$ and $c_v$ the unit speed geodesic with initial direction $v$.
Now define for $x\in X$ the Busemann function $b_v(x)=\lim_{t\to \infty}b_{v,t}(x)$, where $b_{t,v}(x)=d(c_v(t),x)-t$.
This limit exists and is a $C^{1,1}$ function on $X$, see for instance \cite{rub.181283719860101}. 
The level sets of the Busemann functions, $H^s_{v}:=b^{-1}_v(s)$ are called horospheres and in the case that $b_v\in C^2(X)$ their second fundamental form in $\pi(v)=p$ is given by $U_v^{\prime}(0)=:U(v)$. Hence their mean curvature is given by the trace of $U(v)$. In the case of a harmonic manifold $v\to \operatorname{trace}U(v)$ is independent of $v\in SX$, hence the mean curvature of horospheres is constant. 
Using this notion of stable and unstable Jacobi tensors Knieper in \cite{knieper2009new} generalised the well known notion of rank for general spaces of nonpositive curvature introduced by Ballmann, Brin and Eberlein \cite{Ballmann1985} 
 to manifolds without conjugated points. 

 Define for $v\in SX$  $S(v):=S'_v(0)$ and $D(v)=U(v)-S(v)$. Then:
\begin{align*}
\mathcal{L}(v)&:=\operatorname{Kern}(D(v))\\
\operatorname{rank}(v)&:=\operatorname{dim}\mathcal{L}(v)+1\\
\operatorname{rank}(X)&:=\operatorname{min}\{\operatorname{rank}(v)\mid v\in SM\}.
\end{align*}
Furthermore Knieper showed that  for a non compact harmonic manifold $\operatorname{rank}(X)=1$ is equivalent to other important notions in geometry these are stated in section 6. 

For $f\in C^2(X)$ the Laplace-Beltrami operator is defined by
\begin{align*}
\Delta f:=\operatorname{div}\operatorname{grad} f
\end{align*}
and for local coordinates $\{x_i\}$ is given by 
\begin{align*}
\Delta f=\sum_{i,j}\frac{1}{\sqrt{\operatorname{det}g}}\frac{\partial}{\partial x_i}\bigl(\sqrt{\operatorname{det}g}g^{ij}\frac{\partial}{\partial x_j}f\bigr)
\end{align*}
where $g=\{g_{ij}\}$ is the matrix which defines the metric tensor $g:TX\times TX\to [0,\infty)$ 
and $\{g^{ij}\}$ its inverse.
 $\Delta$ is by definition linear on $C^{\infty}_c(X)$ and we have 
\begin{align*}
\int_X -\Delta f(x)\cdot f(x) \,dx=\int_X\lVert \nabla f(x)\rVert_g^2 \,dx\quad\forall f\in C^{\infty}_c(X)
\end{align*}
where $\lVert\cdot\rVert_g$ is the norm induced by $g$. 
Hence $-\Delta$ is a non negative symmetric operator. Furthermore $-\Delta$ is formally self adjoint hence by the density of $C^{\infty}_c(X)$ in $L^2(X)$ 
we can extent $\Delta$ to a self adjoint operator on  $L^2(X)$ which in abuse of notation we will again denote by $\Delta$. The above also implies that the spectrum  of $\Delta$ is contained in the negative half line. 
 From now on assume that $(X,g)$ is a non compact simply connected harmonic manifold with mean curvature of the horosphere $h=2\rho$. In this case the authors showed in \cite{BusemannHarmonic} that $\Delta b_v=h$ and hence the Busemann functions as well as all eigenfunctions of $\Delta$ are analytic by elliptic regularity since harmonic manifolds are Einstein, see for instance  \cite[Sec. 6.8]{willmore1996riemannian}, and therefore analytic by the Kazdan-De Truck theorem \cite{ASENS_1981_4_14_3_249_0}. Furthermore the authors in \cite[Corollary 5.2]{PS15} showed that the top of the spectrum of $\Delta$ is given by $-\rho^2$.
 \begin{lemma}[\cite{Biswas2019}, Lemma 3.1]\label{lemma:Lapace}
Let $f$ be a $C^2$ function on $(X,g)$  and $u$ a $C^{\infty}$ function on $\R$. Then we have:
$$\Delta (u\circ f)=(u''\circ f)\lVert \operatorname{grad} f\rVert_g^2 +(u'\circ f)\Delta f.$$
where $\lVert\cdot\rVert_g^2=g(\cdot,\cdot)$.
\end{lemma}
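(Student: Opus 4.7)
The plan is to unpack the definition $\Delta = \operatorname{div}\operatorname{grad}$ and apply the chain rule for the gradient together with the Leibniz rule for the divergence. The statement is a pointwise identity, so it suffices to verify it at an arbitrary $p \in X$ where $f$ is $C^2$ and $u$ is $C^\infty$ on an open neighbourhood of $f(p)$ in $\R$.

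First I would compute $\operatorname{grad}(u\circ f)$. Since for any tangent vector $V$ at $p$ one has $V(u\circ f) = u'(f(p))\, V(f)$, the defining identity $g(\operatorname{grad}(u\circ f), V) = V(u\circ f) = (u'\circ f)\, g(\operatorname{grad} f, V)$ yields the chain rule
\begin{equation*}
\operatorname{grad}(u\circ f) = (u'\circ f)\,\operatorname{grad} f.
\end{equation*}
Next I would apply the divergence to both sides. The Leibniz rule for the divergence of a function times a vector field, $\operatorname{div}(hV) = g(\operatorname{grad} h, V) + h\,\operatorname{div} V$, gives
\begin{equation*}
\Delta(u\circ f) = \operatorname{div}\!\bigl((u'\circ f)\,\operatorname{grad} f\bigr) = g\bigl(\operatorname{grad}(u'\circ f),\,\operatorname{grad} f\bigr) + (u'\circ f)\,\operatorname{div}\operatorname{grad} f.
\end{equation*}
The second term is already $(u'\circ f)\,\Delta f$. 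For the first term, I apply the gradient chain rule once more to $u'\circ f$, obtaining $\operatorname{grad}(u'\circ f) = (u''\circ f)\,\operatorname{grad} f$, and then $g(\operatorname{grad} f,\operatorname{grad} f) = \lVert \operatorname{grad} f\rVert_g^2$ finishes the identity.

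There is no genuine obstacle here: the proof is entirely a two-step application of the chain rule framed by the Leibniz rule, and the hypotheses $f\in C^2$ and $u\in C^\infty$ guarantee that $u''\circ f$, $u'\circ f$ and $\Delta f$ are all well-defined continuous functions. If desired one could equally verify the formula in local coordinates using the coordinate expression for $\Delta$ recalled above, but the intrinsic computation via $\operatorname{div}\operatorname{grad}$ is cleaner and coordinate-free.
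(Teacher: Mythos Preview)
Your proof is correct and is the standard argument for this identity. The paper does not actually supply a proof of this lemma; it merely cites it from \cite{Biswas2019} and then uses it, so there is nothing to compare against.
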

 With Lemma \ref{lemma:Lapace} we can calculate the spherical and horospherical part of the Laplacian, by
choosing $f=d_x$ for some $x\in X$. We obtain with $\Delta d_x(r)=\frac{A^{\prime}(r)}{A(r)}\circ d_x(r)$ using spherical coordinates around $x$
\begin{align}\label{equ:radiallaplace}
\Delta(u\circ d_x)=u^{\prime\prime}\circ d_x +u^{\prime}\circ d_x\cdot \frac{A^{\prime}}{A}\circ d_x.
\end{align}
For the Busemann function  $f=b_v$ with $\Delta b_v=h=2\rho$ we obtain using horospherical coordinates 
\begin{align}\label{eq:horolaplace}
\Delta( u\circ b_v)=u^{\prime\prime}\circ b_v+h\cdot u^{\prime}\circ b_v.
\end{align}
From this we have that the radial part of the Laplacian, does only depend on the radius and not on specific points. Therefore we obtain:
\begin{lemma}\label{equivalenz}
Let $f:X\to \C$ be a $C^{\infty}_c(X)$ function and $x\in X$ then for the mean value operators
$$M_x f (r):=\frac{1}{\vol(S(x,r))}\int_{S(x,r)}f (z)\,dz$$
and
$$R_x(f)(y):=M_xf(d(x,y))$$

we have $$\Delta R_xf(y)=R_x(\Delta f)(y).$$
Especially we have for  $$L_{A}:=\frac{d^2}{dr^2}+\frac{A^{\prime}(r)}{A(r)}\frac{d}{dr}$$ 
that
$$L_A M_x(f)(r)=M_x(\Delta f)(r).$$
\end{lemma}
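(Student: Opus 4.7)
The plan is to treat the second identity as the real content and derive the first from it via equation~(\ref{equ:radiallaplace}). For $f\in C^\infty_c(X)$, the spherical mean $M_x f$ extends to a smooth even function on $\R$, so $R_x f = M_x f\circ d_x$ is a radial smooth function around $x$ to which~(\ref{equ:radiallaplace}) applies, yielding $\Delta R_x f = (L_A M_x f)\circ d_x$. Once one has $L_A M_x f = M_x(\Delta f)$, the right-hand side becomes $M_x(\Delta f)\circ d_x = R_x(\Delta f)$, proving the first identity. So it suffices to establish the commutation $L_A M_x f(r) = M_x(\Delta f)(r)$ on $\R_{>0}$.

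To prove this, I would work in geodesic polar coordinates $(r,\theta)\in (0,\infty)\times S_xX$ around $x$, where the Laplacian admits the standard decomposition
\begin{align*}
\Delta f = \partial_r^2 f + H(r,\theta)\,\partial_r f + \Delta_{S(x,r)}f,
\end{align*}
with $H(r,\theta)$ the mean curvature of $S(x,r)$ at the point $\exp_x(rv)$ and $\Delta_{S(x,r)}$ the intrinsic Laplacian of the sphere. Harmonicity forces $H(r,\theta)=A'(r)/A(r)$, independent of $\theta$, and simultaneously makes the induced volume element on $S(x,r)$ factor as $A(r)\,dv$ where $dv$ is the standard measure on $S_xX$. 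Hence
\begin{align*}
M_x f(r) = \frac{1}{\vol(S_xX)}\int_{S_xX} f(\exp_x(rv))\,dv.
\end{align*}
Since $f$ is compactly supported, differentiating under the integral sign gives $\partial_r^k M_x f(r) = M_x(\partial_r^k f)(r)$ for $k=1,2$, and Stokes' theorem on the closed manifold $S(x,r)$ gives $M_x(\Delta_{S(x,r)}f)(r)=0$. Averaging the displayed decomposition of $\Delta f$ over $S(x,r)$ then produces
\begin{align*}
M_x(\Delta f)(r) = \partial_r^2 M_x f(r) + \frac{A'(r)}{A(r)}\,\partial_r M_x f(r) = L_A M_x f(r),
\end{align*}
which is the claim.

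The main obstacle is precisely the step where harmonicity is used: one must recognise that the coefficient of $\partial_r$ in the polar decomposition of $\Delta$ is the mean curvature $H(r,\theta)$, and that this equals $A'(r)/A(r)$ independently of $\theta$ because $\det A_v(r)=A(r)$ is direction-free on a harmonic manifold. Without this, neither $M_x f$ would be expressible as a pure angular integral against a $\theta$-independent weight, nor would the radial coefficient survive the averaging as a scalar multiple of $\partial_r M_x f$. Once the harmonic identification is in place, the remaining ingredients—differentiation under the integral and the vanishing of the spherical-Laplacian integral by Stokes—are routine.
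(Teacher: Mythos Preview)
Your proposal is correct and follows essentially the same argument as the paper: both use the polar decomposition of $\Delta$ into a radial part with coefficient equal to the mean curvature (which harmonicity makes $\theta$-independent, equal to $A'(r)/A(r)$) plus the intrinsic spherical Laplacian, and both kill the spherical contribution via Stokes/Green on the closed sphere. The only cosmetic difference is the order: the paper first establishes $\Delta R_x f = R_x(\Delta f)$ and then reads off the $L_A$ identity from~(\ref{equ:radiallaplace}), whereas you prove $L_A M_x f = M_x(\Delta f)$ directly and deduce the $R_x$ statement from it.
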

\begin{proof}
We can decompose the Laplacian 
$$\Delta f (y)=\Delta_{S(x,d(x,y))}f(y)+\Delta_{\text{radial}}f(y).$$
Where $\Delta_{S(x,d(x,y))}$ denotes the Laplacian of $S(x,d(x,y))$ and $\Delta_{\text{radial}}$ is defined by: 
$$(\Delta_{\text{radial}}f)(c_v(r))=L_A(f\circ c_v)(r),$$
where for $v\in SX$, $c_v$ is the geodesic corresponding to the initial conditions $c_v(0)=\pi(v)$ and $\dot{c}_v(0)=v$.
Since $S(x,d(x,y))$ is closed  Greens first identity implies:
$$\int_{S(x,d(x,y))}\Delta_{S(x,d(x,y))}f(z)\,dz=0.$$
Now the radial  part of the Laplacian only depends on radial derivatives and the mean curvature of the geodesic sphere which since $X$ is harmonic also only depends on the radius hence:
\begin{align*}
R_x(\Delta f)(y)&=R_x(\Delta_{\text{radial}}f)(y)\\
\overset{\text{ $X$ is harmonic}}&{=}\Delta_{\text{radial}}R_x(f)(y)\\
&=\Delta R_x(f)(y).
\end{align*}
The second part of the Lemma follows now from (\ref{equ:radiallaplace}).
\end{proof}
\begin{bem}
Note that the fact that the Laplace operator commutes with the mean value operator is equivalent to $X$ being harmonic. See for instance \cite[Lemma 1.1]{szabo1990}.
\end{bem}
\begin{lemma}\label{lemma:radself}
Let $x_0\in X$ then $R_{x_0}:C_c^{\infty}(X)\to C_c^{\infty}(X,x_0)$ is self adjoint with respect to the $L^2$-product on $X$ i.e.:
\begin{align*}
\int_X(R_{x_0}f)(x)g(x)\,dx=\int_Xf(x)(R_{x_0}g)(x)\,dx \quad \forall f,g\in C^{\infty}_c(X).
\end{align*}
\end{lemma}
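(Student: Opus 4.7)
The plan is to compute both sides of the claimed identity directly in geodesic polar coordinates centred at $x_0$, and observe that each reduces to the same manifestly symmetric integral in $f$ and $g$.

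First I would set up the polar decomposition of the volume measure on $X$. Since $X$ is harmonic, the volume density $\det A_v(r)$ depends only on $r$ and equals a common function $A(r)$, so for any $h\in C_c(X)$ one has
\begin{equation*}
\int_X h(y)\,dy=\int_0^\infty\!\!\int_{S_{x_0}X} h(\exp_{x_0}(rv))\,A(r)\,dv\,dr,
\end{equation*}
and $\vol S(x_0,r)=\omega\,A(r)$ where $\omega=\vol(S_{x_0}X)$. In particular,
\begin{equation*}
M_{x_0}f(r)=\frac{1}{\omega}\int_{S_{x_0}X}f(\exp_{x_0}(rv))\,dv,
\end{equation*}
which is a function of $r$ alone.

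Next I would expand $\int_X(R_{x_0}f)(y)g(y)\,dy$ in these coordinates. Since $(R_{x_0}f)(\exp_{x_0}(rw))=M_{x_0}f(r)$ is independent of the angular variable $w$, the factor $M_{x_0}f(r)$ pulls out of the inner integral, and the inner integral of $g$ produces $\omega\,M_{x_0}g(r)$. Hence
\begin{equation*}
\int_X(R_{x_0}f)(y)g(y)\,dy=\omega\int_0^\infty M_{x_0}f(r)\,M_{x_0}g(r)\,A(r)\,dr.
\end{equation*}
This expression is symmetric under exchange of $f$ and $g$, so repeating the same computation with the roles of $f$ and $g$ swapped yields
\begin{equation*}
\int_X f(y)(R_{x_0}g)(y)\,dy=\omega\int_0^\infty M_{x_0}g(r)\,M_{x_0}f(r)\,A(r)\,dr,
\end{equation*}
and comparing the two identities gives the lemma.

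There is not really a hard step here; the only thing one must use crucially is the harmonicity assumption, which guarantees that $A(r)$ (and hence $\vol S(x_0,r)$) depends on $r$ only, so that the factorisation $(R_{x_0}f)(\exp_{x_0}(rw))=M_{x_0}f(r)$ is meaningful and Fubini applies cleanly. Compact support of $f$ and $g$ ensures all integrals are absolutely convergent and justifies the interchange of the order of integration.
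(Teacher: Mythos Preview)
Your proof is correct and is essentially identical to the paper's argument: both integrate in geodesic polar coordinates around $x_0$, use harmonicity to write the sphere volume as $\omega\,A(r)$, and reduce each side to the symmetric expression $\omega\int_0^\infty M_{x_0}f(r)\,M_{x_0}g(r)\,A(r)\,dr$.
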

\begin{proof}
Let $f,g\in C^{\infty}_c(X)$ and $x_0\in X$.
We integrate in geodesic polar coordinates using equation (\ref{eq:volS}) and the fact that $X$ is harmonic:
\begin{align*}
\int_X(R_{x_0}f)(x)g(x)\,dx&=\frac{1}{\omega_{n-1}}\int_0^{\infty}\Big( \int_{S_{x_0}X}f(\exp(rv))\,dv\\
&\cdot  \int_{S_{x_0}X}g(\exp(rv))\,dv\Big )\,A(r)\,dr\\
&=\int_Xf(x)(R_{x_0}g)(x)\,dx 
\end{align*}
where $\omega_{n-1}=\operatorname{vol} S^{n-1}$. 
\end{proof}

\section{The Abel Transform and Its Dual}

Peyerimhoff and Samion discussed the Abel transform and its dual for radial functions as well as its connection to the radial Fourier transform in \cite{PS15}. We will use these to construct a solution to the shifted wave equation. Therefore we recall the definition and some imported facts that we will need in the prove of our main theorems. 
 For this purpose we need the following version of the Co-area formula.

 \begin{satz}[{\cite[p.160]{chavel2006}}]
 Let $M$ be a connected Riemannian manifold. Given a $C^1$-function $f:M\to\R$ such the gradient $\operatorname{grad}f$ never vanishes on $M$, let $S_t$ denote the hypersurface defined by $S_t=\{x\in M\mid f(x)=t\}$, $t\in \R$. Then, for any $g\in C^0_c(M)$,
 \begin{align*}
 \int_M g(x) \,dx=\int_{\R} \int_{S_t}\frac{g(y)}{\lVert \operatorname{grad} f(y)\rVert_g}\,dy \,dt.
 \end{align*}
 \end{satz}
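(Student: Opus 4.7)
The plan is to reduce the global identity to a local change-of-variables on a product $S_{t_0}\times I$, using the flow of a suitably normalized gradient vector field, and then glue via a partition of unity. First I would pick a locally finite open cover of $\operatorname{supp}(g)$ by charts so small that $f$ is a submersion with connected fibres on each; this is possible because $\operatorname{grad} f$ never vanishes, so by the implicit function theorem every point has a neighborhood on which $f$ looks like a coordinate projection. By the compactness of $\operatorname{supp}(g)$, only finitely many members of a subordinate partition of unity $\{\chi_i\}$ contribute, and it suffices to prove the formula for each $\chi_i g$ separately.

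On a single such chart $U$ pick a value $t_0$ with $S_{t_0}\cap U \neq \emptyset$ and consider the vector field
\begin{align*}
V = \frac{\operatorname{grad} f}{\lVert \operatorname{grad} f\rVert_g^{\,2}}.
\end{align*}
By construction $V(f)=df(V)=1$, so its flow $\Phi_s$ satisfies $f\circ\Phi_s = f+s$ on its domain; in particular $\Phi(y,s):=\Phi_s(y)$ maps a neighborhood of $S_{t_0}\cap U$ times an interval $I$ diffeomorphically onto an open set in $M$, with $\Phi(y,s)\in S_{t_0+s}$. The key computation is the Jacobian of $\Phi$ at $(y,0)$: the restriction to $T_y S_{t_0}$ is the identity, while the transverse direction is sent to $V(y)$, which is orthogonal to $T_y S_{t_0}$ (being a scalar multiple of $\operatorname{grad} f(y)$) and has length $1/\lVert \operatorname{grad} f(y)\rVert_g$. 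Hence, in the adapted product decomposition, the ambient volume element pulls back to
\begin{align*}
\Phi^{*}\,dx \;=\; \frac{1}{\lVert \operatorname{grad} f\rVert_g}\, dy\, ds,
\end{align*}
where $dy$ is the induced volume on the corresponding level set.

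Applying Fubini to this factorization and summing over the partition of unity gives the asserted identity. The main technical point is the orthogonality-and-length calculation for the transverse direction; this is why one must rescale $\operatorname{grad} f$ so that $V(f)=1$, because a generic transverse vector would mix the hypersurface and normal volume contributions and obscure the clean factor $1/\lVert \operatorname{grad} f\rVert_g$. The only further care needed is that the chart on which $\Phi$ is a diffeomorphism may be shrunk to lie within $U$, which is harmless since the partition of unity already localizes $g$ to arbitrarily small sets.
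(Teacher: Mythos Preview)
The paper does not give a proof of this statement; it is quoted verbatim as the co-area formula from Chavel's textbook and used as a black box. So there is nothing in the paper to compare your argument against.

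Your flow-of-the-normalized-gradient approach is the standard one and is essentially sound, but two technical points deserve care. First, the hypothesis is only $f\in C^{1}$, so $\operatorname{grad} f$ and hence $V=\operatorname{grad} f/\lVert\operatorname{grad} f\rVert_g^{2}$ are merely continuous; existence, uniqueness and differentiable dependence of the flow $\Phi_s$ are then not guaranteed. Either strengthen the hypothesis to $f\in C^{2}$ (harmless here, since the only application in the paper is to analytic Busemann functions) or bypass the flow and work directly in the $C^{1}$ straightening coordinates furnished by the implicit function theorem. Second, your Jacobian computation is stated only at $(y,0)$. For $s\neq 0$ the differential $d\Phi_s|_{T_yS_{t_0}}$ is not the identity, so the pullback of $dx$ to $S_{t_0}\times I$ carries an extra factor, namely the Jacobian $J(y,s)$ of $\Phi_s\colon S_{t_0}\to S_{t_0+s}$ between the induced hypersurface measures. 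This factor is precisely what turns $\int_{S_{t_0}}(\cdots)J\,dy_{0}$ into $\int_{S_{t_0+s}}(\cdots)\,dy$ after the change of variables, so the end formula is correct; but the intermediate identity $\Phi^{*}dx=\lVert\operatorname{grad} f\rVert_g^{-1}\,dy\,ds$ must be read with $dy$ denoting the induced measure on the level set through the current point, not the fixed measure on $S_{t_0}$.
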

 
Let $x_0\in X$ and $v\in S_{x_0}X$ then  $H_{v}^s=b_v^{-1}(s)$ denote the horospheres and $N(x)=-\grad b_v(x)$.
Then the  map  
\begin{align*}
\Psi_{v,s}:H_v^0\to H_v^s\\
 x\mapsto \exp(-sN(x))
 \end{align*}
 is a diffeomorphism
  and
 \begin{align}\label{eq:diffeo1}
  \Psi_{v}:\R\times H_{v}^0\to X\\
  \Psi_{v}(s,x)=\Psi_{v,s}(x)\nonumber
  \end{align}
is an orientation preserving diffeomorphism. Furthermore the Jacobian of $\Psi_{v,s}$ is given by $e^{hs}$ (see \cite[Proposition 3.1]{PS15}).
Hence, for a measurable function $f:X\to \R$  we get  : 
\begin{align}\label{eq:diffeo2}
 \int_{H_{v}^s}f(z)\,dz=e^{sh}\int_{H_{v}^0}f(\Psi_s(z))\,dz.
 \end{align}
\begin{defi}
For $v\in S_{x_0}X$ and define 
$$j:C^{\infty}_{\text{even}}(\R)\to C^{\infty}(X)$$
$$(jf)(x)=e^{-\rho b_v(x)}f(b_v(x))$$
and $$a:C^{\infty}_{\text{even}}(\R)\to C^{\infty}(X,x_0)$$
by $$a(f)(y)=M_{x_0}( j(f))\circ d(x_0,y).$$

The dual with respect to the $L^2$-inner product of $\R$ and $X$ is called the Abel transform and is denoted by $\mathcal{A}$. This means that for every $g\in C^{\infty}(X,x_0)$ and $f\in C^{\infty}_{\text{even}}(\R)$  we have
\begin{align*}
\int_\R \mathcal{A}(g)(s)f(s)\,ds=\int_X g(x)a(f)(x)\,dx.
\end{align*}
\end{defi}
Furthermore the authors in \cite{PS15} showed in Proposition 3.5  that:
 \begin{lemma}
For $f\in C^{\infty}_c(X,x_0)$  we have: 
\begin{align*}
\mathcal{A}(f)(s)&=e^{-\rho s}\int_{H^s_v}f(z)\,dz\\
&=e^{\rho s}\int_{H^0_v}f(\Psi_{v,s}(z))\,dz.
\end{align*}
Furthermore $\mathcal{A}(f)$ is smooth, has compact support and is even. 
\end{lemma}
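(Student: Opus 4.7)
The plan is to unfold the definition of $\mathcal{A}$ as the $L^2$-dual of $a$ and to recover the stated formula from two ingredients already in hand: the self-adjointness of $R_{x_0}$ (Lemma \ref{lemma:radself}) and the co-area formula from the beginning of Section 3. Pick $g\in C^{\infty}_c(X,x_0)$ and an arbitrary even $f\in C^{\infty}_c(\R)$. Expanding
\begin{align*}
\int_\R \mathcal{A}(g)(s)f(s)\,ds = \int_X g(x)\,a(f)(x)\,dx = \int_X g(x)\,R_{x_0}(j f)(x)\,dx,
\end{align*}
Lemma \ref{lemma:radself} lets me transfer $R_{x_0}$ onto $g$; since $g$ is radial around $x_0$ we have $R_{x_0}(g)=g$, so the right hand side equals $\int_X g(x)e^{-\rho b_v(x)}f(b_v(x))\,dx$. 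Applying the co-area theorem with $b_v$ (smooth by the Kazdan--DeTurck argument recalled earlier, with $\lVert\grad b_v\rVert_g=1$) turns this into
\begin{align*}
\int_\R f(s)\,e^{-\rho s}\Bigl(\int_{H^s_v}g(z)\,dz\Bigr)\,ds,
\end{align*}
which identifies $\mathcal{A}(g)(s)$ with $e^{-\rho s}\int_{H^s_v}g(z)\,dz$ modulo an odd summand. The second displayed equality follows immediately from (\ref{eq:diffeo2}) with Jacobian $e^{hs}=e^{2\rho s}$.

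The regularity and support statements are straightforward bookkeeping. Because $b_v$ is $1$-Lipschitz, $\supp(g)\subset B(x_0,R)$ gives $\supp(\mathcal{A}(g))\subset[b_v(x_0)-R,\,b_v(x_0)+R]$, so $\mathcal{A}(g)$ is compactly supported. Smoothness follows from the second form of the formula: $(s,z)\mapsto g(\Psi_{v,s}(z))$ is smooth on $\R\times H^0_v$ with $s$-support uniformly compact in $z$ on compact $s$-intervals, so one can differentiate under the integral sign.

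The only genuinely delicate point is evenness. The formula $e^{-\rho s}\int_{H^s_v}g(z)\,dz$ is not manifestly invariant under $s\mapsto -s$, since such a symmetry would compare horospheres in direction $v$ with those in direction $-v$, something not available on a general harmonic manifold. I would establish evenness indirectly by relating $\mathcal{A}$ to the radial Fourier transform: if $\varphi_\lambda$ denote the radial $(\Delta+\rho^2+\lambda^2)$-eigenfunctions on $X$, then $\varphi_\lambda=\varphi_{-\lambda}$ forces the spherical transform $\mathcal{F}(g)$ to be even, and the factorisation $\mathcal{F}=\mathcal{F}_\R\circ\mathcal{A}$ (valid on harmonic manifolds, see \cite{PS15}) together with injectivity of $\mathcal{F}_\R$ on even functions then forces $\mathcal{A}(g)$ itself to be even. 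This is where harmonicity re-enters in a non-trivial way; the rest of the lemma is essentially a bookkeeping combination of self-adjointness and the co-area formula.
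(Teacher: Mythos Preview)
Your argument is essentially the paper's own: both use self-adjointness of $R_{x_0}$ together with the co-area formula to verify the duality relation, and both invoke the symmetry $\varphi_\lambda=\varphi_{-\lambda}$ of the radial eigenfunctions for evenness.

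There is, however, a small circularity in your evenness step. Since $\mathcal{A}$ is defined only through duality against \emph{even} test functions, your computation pins down $\mathcal{A}(g)$ only modulo an odd summand --- as you yourself note. To finish you must show that the \emph{explicit candidate} $G(s)=e^{-\rho s}\int_{H^s_v}g(z)\,dz$ is even, not merely that the abstract $\mathcal{A}(g)$ is. Invoking the factorisation $\mathcal{F}=\mathcal{F}_\R\circ\mathcal{A}$ from \cite{PS15} only tells you $\mathcal{F}_\R(\mathcal{A}(g))$ is even, which is either vacuous (if $\mathcal{A}(g)$ means the even representative) or assumes $G=\mathcal{A}(g)$, the very thing at issue; moreover in \cite{PS15} that factorisation is Proposition~3.10, proved \emph{after} the present lemma (their Proposition~3.5). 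The paper sidesteps this by computing $\mathcal{F}_\R(G)$ directly: integrate $G(s)e^{i\lambda s}$ over $\R$, pass to horospherical coordinates, use Lemma~\ref{lemma:radself} together with $R_{x_0}\bigl(e^{(i\lambda-\rho)b_v}\bigr)=\varphi_{\lambda,x_0}$, and identify the result with $\int_X g\,\varphi_{\lambda,x_0}\,dx$, which is manifestly even in $\lambda$. This is exactly the idea you sketch, only applied to the concrete $G$ rather than to the abstractly defined $\mathcal{A}(g)$.
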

\begin{proof}
Let $f\in C^{\infty}_{c}(X,x_0)$ and define 
\begin{align*}
g(s):=e^{-\rho s}\int_{H^s_v}f(z)\,dz.
\end{align*}
Then bottom equality follows immediately from (\ref{eq:diffeo2}). Therefore we only need to show that:
\begin{align}\label{eq:lemmaabel}
\int_{\R} g(s)h(s)\,ds=\int_X f(x)a(h)(x)\,dx \quad \forall h\in C^{\infty}_{\text{even}}(\R)
\end{align}
and that $g(s)$ is even, since the smoothness follows after showing the equality from the smoothness of $\Psi_{s,v}$ in $s$.
Now we prove (\ref{eq:lemmaabel})
\begin{align*}
\int_{\R} g(s)h(s)\,ds&=\int_{\R} h(s)e^{-\rho s}\int_{H^s_v}f(z)\,dz\,ds\\
&=\int_{\R}\int_{H^s_v} h(b_v(z))e^{-\rho s}f(z)\,dz\,ds\\
\overset{\text{Co-area formula}}&{=}\int_Xf(x)e^{-\rho b_v(x)}h(b_v(x))\,dx\\
&=\int_Xf(x)j(h)(x)\,dx\\
&=\int_XR_{x_0}(f)(x)j(h)(x)\,dx\\
\overset{\text{Lemma \ref{lemma:radself}}}&{=}\int_Xf(x)R_{x_0}(j(h))(x)\,dx\\
&=\int_Xf(x)a(h)(x)\,dx.
\end{align*}
Let for $\lambda\in \C$, $\varphi_{\lambda,x_0}$ be a eigenfunction  of the Laplacian with eigenvalue $-(\lambda^2+\rho^2)$  radial around $x_0$ with $\varphi_{\lambda,x_0}(x_0)=1$.
Now evenness follows similar  to (\ref{eq:lemmaabel}) if we observe that since the Laplacian commutes  with $R_{x_0}$ and by (\ref{eq:horolaplace})
$e^{(i\lambda -\rho)b_v(x)}$ is for all $\lambda\in \C$ a eigenfunction of $\Delta$ with eigenvalue $-(\lambda^2+\rho^2)$ we have
\begin{align}\label{eq:buseef}
R_{x_0}\left(e^{(i\lambda -\rho)b_v(\cdot)}\right)(x)=\varphi_{\lambda,x_0}(x).
\end{align}
Then using this and integration in horospherical coordinates yields: 
\begin{align*}
\int_{\R}g(s)e^{i\lambda s}\,ds&=\int_{\R} e^{i\lambda s} e^{-\rho s}\int_{H^s_v}f(z)\,dz\,ds\\
&=\int_{\R}\int_{H^s_v} e^{i\lambda b_v(z)}e^{-\rho s}f(z)\,dz\,ds\\
\overset{\text{horospherical coordinates}}&{=}\int_Xf(x)e^{(i\lambda-\rho)b_v(x)}\,dx\\
\overset{\text{f radial +Lemma \ref{lemma:radself}}}&{=}\int_Xf(x)R_{x_0}(e^{(i\lambda-\rho)b_v(\cdot)})(x)\,dx\\
\overset{\text{(\ref{eq:buseef})}}&{=}\int_X f(x)\varphi_{\lambda,x_0}(x)\,dx.
\end{align*}
Now we have that $\varphi_{\lambda,x_0}=\varphi_{-\lambda,x_0}$, hence:
\begin{align*}
\int_{\R}g(s)e^{i\lambda s}\,ds=\int_{\R}g(s)e^{-i\lambda s}\,ds.
\end{align*}
This in tune implies that:
\begin{align*}
\int_{\R}e^{i\lambda s} (g(s)-g(-s))\,ds=0 \quad \forall \lambda \in \C.
\end{align*}
 By taking $\lambda\in \R$ this implies that $g$ is even.

\end{proof}

Furthermore the authors showed in \cite[Proposition 3.10]{PS15} that the Euclidean Fourier transform of the Abel transform is equal to the radial Fourier transform, given  for a function radial around $x_0$ with compact support by 
$$\hat{f}^{x_0}(\lambda)=\int_Xf(x)\varphi_{\lambda,x_0}(x)\,dx,$$
where $\varphi_{\lambda,x_0}$ is the radial eigenfunction of the Laplacian around $x_0$ with eigenvalue $-(\lambda^2+\rho^2)$ and $\varphi_{\lambda,x_0}(x_0)=1$. 
This means that 
\begin{align}\label{eq:abel-fourier}
\hat{f}^{x_0}(\lambda)=\mathcal{F}(\mathcal{A}(f))(\lambda)
\end{align}
where $\mathcal{F}(u)(\lambda)=\int_{\R}e^{i\lambda s}u(s)\,ds$ for $u:\R\to\R$ sufficiently regular is the Euclidian Fourier transform.
\begin{bem}
Applying $\mathcal{F}^{-1}$ to both sides in equation (\ref{eq:abel-fourier}) yields that the Abel transform and thereby its dual are independent of the choice of $v\in S_{x_0}X$. See also Lemma \ref{lemma:PW2}.
\end{bem}
\begin{satz}[\cite{PS15}, Theorem 3.8]
The dual Abel transform is a topological isomorphism between the spaces of smooth even functions on $\R$ and smooth radial functions around $x_0$.
\end{satz}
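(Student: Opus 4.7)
My plan is to reduce the claim about $a$ to the analogous claim about its $L^2$-adjoint $\mathcal{A}$, and then to use the Fourier identification (\ref{eq:abel-fourier}) which factors $\mathcal{A}$ through the Euclidean Fourier transform. Since $a$ and $\mathcal{A}$ are adjoints between Fréchet spaces, topological invertibility of one is equivalent to topological invertibility of the other, so I will concentrate on $\mathcal{A}$.

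To prove continuity of $\mathcal{A}$, I would use the horospherical integral representation $\mathcal{A}(f)(s)=e^{\rho s}\int_{H^0_v}f(\Psi_{v,s}(z))\,dz$. Differentiation under the integral sign, combined with the smooth dependence of $\Psi_{v,s}$ on $s$ from (\ref{eq:diffeo1}) and the Jacobian estimate $e^{hs}$, yields continuity of $\mathcal{A}$ between the Fréchet topologies of uniform convergence of derivatives on compact sets. Continuity of $a$ is analogous, using integration on spheres and the radial integral formula (\ref{eq:volS}).

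For bijectivity, I would exploit the factorization $\mathcal{A}=\mathcal{F}^{-1}\circ \widehat{\cdot}^{x_0}$ coming from (\ref{eq:abel-fourier}). Since $\mathcal{F}^{-1}$ is already a topological isomorphism on the even smooth spaces in play, the problem reduces to showing that the radial Fourier transform $g\mapsto \hat g^{x_0}$ is a topological isomorphism from smooth radial functions around $x_0$ onto an appropriate image space of even smooth functions on $\R$. Injectivity follows from the fact that $\{\varphi_{\lambda,x_0}\}_{\lambda\in\R}$ provides a complete family of spectral eigenfunctions of the self-adjoint Laplacian on radial functions, so $\hat g^{x_0}\equiv 0$ forces $g\equiv 0$.

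The main obstacle is surjectivity together with continuity of the inverse: because a general simply connected non-compact harmonic manifold need not be homogeneous, the usual Harish-Chandra Plancherel machinery is unavailable, and an inversion formula must instead be built from the structural properties of the spherical functions $\varphi_{\lambda,x_0}$ alone. A natural route is to exploit the intertwining from Lemma \ref{equivalenz} between the radial Laplacian and the Sturm-Liouville operator $L_A$, reducing the inversion of $\widehat{\cdot}^{x_0}$ to a spectral problem for $L_A$ on $\R_{\geq 0}$ with weight $A(r)$; the asymptotics of $A$ at infinity then yield both the surjectivity and the continuity estimates for the inverse.
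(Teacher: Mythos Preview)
The paper does not prove this statement; it is quoted from \cite{PS15} and used as a black box. So there is no proof in the present paper to compare against. That said, the paper does give hints about the structure of the original argument: it repeatedly invokes the \emph{local} injectivity of $a$ (see (\ref{eq:localinj}) and the proof of Theorem~\ref{thm:existenzwave}), and Lemma~\ref{lemma:convergence} is described as being ``already contained in the proof of Theorem~3.8 in \cite{PS15}''. This indicates that the original proof proceeds by expanding radial functions in Dirichlet eigenfunctions on balls $B(x_0,R)$, establishing that $a$ is a bijection on each $C^\infty_{\text{even}}([-R,R])\to C^\infty(B(x_0,R),x_0)$, and then letting $R\to\infty$.

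Your proposal takes a different route and has two genuine gaps. First, the adjoint reduction is not sound as stated: $a$ acts on $C^\infty_{\text{even}}(\R)$ (no support condition), while $\mathcal{A}$ is only defined on $C^\infty_c(X,x_0)$ and lands in $C^\infty_{c,\text{even}}(\R)$. These are not mutually dual Fr\'echet spaces in any way that makes ``invertibility of one $\Leftrightarrow$ invertibility of the other'' automatic; the transpose of $a$ would be a map on compactly supported distributions, not on $C^\infty_c$. Second, even granting the reduction, your surjectivity argument for $\widehat{\cdot}^{x_0}$ is essentially the radial Paley--Wiener theorem, and in this paper (and in \cite{PS15}) that theorem is \emph{derived from} the Abel transform isomorphism via (\ref{eq:abel-fourier}), not the other way around; invoking it here is circular. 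Moreover, the $\mathbf{c}$-function and Plancherel apparatus you would need for the Sturm--Liouville inversion are only set up in this paper under the additional rank-one hypothesis (Section~6), whereas the theorem is stated for arbitrary non-flat simply connected harmonic manifolds. The local Dirichlet-eigenfunction approach avoids all of this by never leaving a fixed ball.
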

This fact is going to be exploited to  characterise solutions of 
 the wave equation on $X$ with smooth initial conditions with compact support. 
\section{Symmetry of the Mean Value Operator}
From here one out we will consider complex valued functions $u:X\to\C$, where the Laplacian of $u$ is given via the decomposition of $u$ in real and imaginary part $u=u_1+iu_2$ by $\Delta u= \Delta u_1 +i\Delta u_2$. 
The proof of the following lemma follows the lines of the proof of Theorem 17 in \cite{Helgason1959} which in turn follows the proof in \cite[p.334]{MR1513094}.
\begin{lemma}\label{lemma:meanvalue}
Let $(X,g)$ be a non compact simply connected harmonic manifold, and $u:X\times X\to\C$ a twice continuous differentiable function with
$$\Delta_{1}u(x,y)=\Delta_{2} u(x,y)\quad\forall x,y\in X,$$
where $\Delta_i$ denotes to Laplacian with respect to the $i$-th variable. 
Then for each $(x_0,y_0)\in X\times X$ we have 
\begin{align*}
&\frac{1}{\vol(S(x_0,r))}\frac{1}{\vol(S(y_0),s)}
 \int_{S(x_0,r)}\int_{S(y_0,s)}u(z_1,z_2)\,dz_2\,dz_1
 \\&=\frac{1}{\vol(S(x_0,s))}\frac{1}{\vol(S(y_0,r))}
 \int_{S(x_0,s)}\int_{S(y_0,r)}u(z_1,z_2)\,dz_2\,dz_1
\end{align*}

for all $r,s\geq 0$.
\end{lemma}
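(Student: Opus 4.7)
The plan is to recast the identity as a symmetry of the double mean-value function
\begin{align*}
F(r,s) := M_{x_0}^{(1)}M_{y_0}^{(2)}u\,(r,s) = \frac{1}{\vol(S(x_0,r))\vol(S(y_0,s))}\int_{S(x_0,r)}\int_{S(y_0,s)}u(z_1,z_2)\,dz_2\,dz_1,
\end{align*}
where $M_{x_0}^{(1)}$ and $M_{y_0}^{(2)}$ denote the spherical mean operators applied in the first and second variable respectively, and then to reduce the claim $F(r,s) = F(s,r)$ to a uniqueness statement for the Darboux-type equation satisfied by $F$.

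First I would establish the Darboux identity $L_A^{(r)}F = L_A^{(s)}F$ on $[0,\infty)^2$. Fixing $z_2$ and applying Lemma \ref{equivalenz} in the first variable gives $L_A^{(r)}M_{x_0}^{(1)}u(r,z_2) = M_{x_0}^{(1)}(\Delta_1 u)(r,z_2)$. Averaging over $z_2 \in S(y_0,s)$, swapping the order of the two sphere integrations by Fubini (valid since $u\in C^2$ and both spheres are compact), and then invoking the hypothesis $\Delta_1 u = \Delta_2 u$ together with Lemma \ref{equivalenz} in the second variable yields
\begin{align*}
L_A^{(r)}F(r,s) = M_{x_0}^{(1)}M_{y_0}^{(2)}(\Delta_1 u)(r,s) = M_{x_0}^{(1)}M_{y_0}^{(2)}(\Delta_2 u)(r,s) = L_A^{(s)}F(r,s).
\end{align*}
Along the way I would record the axis conditions: $F$ extends smoothly as an even function in each variable through the origin, so $\partial_r F(0,s) = 0 = \partial_s F(r,0)$.

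Next I would set $G(r,s) := F(r,s) - F(s,r)$ and verify that $G$ inherits the Darboux equation $L_A^{(r)}G = L_A^{(s)}G$ (immediate from the $r \leftrightarrow s$ symmetry of the equation), is antisymmetric and hence vanishes on the diagonal $r = s$, and satisfies $\partial_r G(0,s) = 0 = \partial_s G(r,0)$. The heart of the argument, following Theorem~17 of \cite{Helgason1959}, is to show that these properties force $G \equiv 0$. One concrete route is to pass to characteristic coordinates $\xi = r+s$, $\eta = r-s$, in which the principal symbol of $L_A^{(r)} - L_A^{(s)}$ reduces to $4\partial_\xi\partial_\eta$ and the antisymmetry of $G$ becomes oddness in $\eta$; the diagonal becomes the characteristic $\eta = 0$, on which $G$ vanishes, and a Goursat-type argument, combined with the axis conditions and the self-adjoint form $L_A = A^{-1}\partial_r(A\,\partial_r)$, propagates the vanishing throughout the quadrant.

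The main obstacle is precisely this last uniqueness step: because the diagonal $r = s$ is itself a characteristic of the hyperbolic operator $L_A^{(r)} - L_A^{(s)}$, a standard Cauchy uniqueness theorem does not apply directly, and one must genuinely exploit the antisymmetry of $G$ together with the axis conditions. A clean implementation would run the classical energy identity for $L_A^{(r)} - L_A^{(s)}$ in the self-adjoint form above, integrating $G\cdot(L_A^{(r)} - L_A^{(s)})G$ against $A(r)A(s)\,dr\,ds$ over a triangular region bounded by a coordinate axis and the diagonal, and using $A(0) = 0$ to absorb the boundary contribution on the axis. The only subtlety is to verify that this adaptation of the Euclidean/symmetric-space argument from \cite{MR1513094} and \cite{Helgason1959} relies solely on the harmonicity of $X$, which is exactly what guarantees that the radial Laplacian $L_A$ is independent of the base point and hence that Lemma \ref{equivalenz} can be applied symmetrically in both variables.
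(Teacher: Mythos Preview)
Your proposal is essentially the paper's proof: reduce to the Darboux equation $L_A^{(r)}F=L_A^{(s)}F$ via Lemma~\ref{equivalenz}, pass to the antisymmetric part, and run an energy argument over a triangle in the $(r,s)$-quadrant, exactly the \'Asgeirsson--Helgason route you cite.

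One imprecision is worth flagging. The multiplier you write, $G\cdot(L_A^{(r)}-L_A^{(s)})G$ against $A(r)A(s)\,dr\,ds$, is identically zero because $G$ solves the equation, so it carries no information. The correct Helgason multiplier (and the one the paper uses) is $2A(r)\,\partial_s G$, not $G$; multiplying the equation by this produces a divergence identity of the form $\partial_r L_2-\partial_s L_1=2\tfrac{A(r)A'(s)}{A(s)}(\partial_s G)^2\ge 0$, to which Stokes is applied over the triangle with vertices $(0,0)$, $(C/2,C/2)$, $(0,C)$. The boundary contribution on the $s$-axis vanishes because $A(0)=0$, the one on the diagonal vanishes because $\partial_r G+\partial_s G=0$ there, and the remaining anti-diagonal piece is a non-negative integral that is therefore forced to vanish, yielding $\partial_r G-\partial_s G=0$ along every line $r+s=C$. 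Together with the diagonal condition this gives $\nabla G\equiv 0$, hence $G\equiv 0$. So your outline is right, but the energy step needs this specific multiplier rather than $G$ itself.
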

\begin{proof}

Let $(x_0,y_0)\in X\times X$ be arbitrary points define 
\begin{align*}
U(x,y):=&\frac{1}{\vol(S(x_0,r))}\frac{1}{\vol(S(y_0,s))}
 &\int_{S(x_0,r)}\int_{S(y_0,s)}u(z_1,z_2)\,dz_2\,dz_1
\end{align*}
with $r=d(x_0,x)$ and $s=d(y_0,y)$. Then $U$ can both be viewed as a function on $X\times X$ and $\R^+\times\R^+$.

 Since the Laplacian $\Delta$ commutes with the mean value operator (see Lemma \ref{equivalenz}) and $u$ is twice continuous differentiable we have:
 \begin{align*}
\Delta_1U(x,y)&=\Delta_1R_{x_0}\big((z,y)\to R_{y_0}(u(z,\cdot))(y)\big)(x)\\
&=R_{x_0}\big((z,y)\to \Delta_1R_{y_0}(u(z,\cdot))(y)\big)(x)\\
&=R_{x_0}\big((z,y)\to R_{y_0}(\Delta_1u(z,\cdot))(y)\big)(x)\\
&=R_{x_0}\big((z,y)\to R_{y_0}(\Delta_2u(z,\cdot))(y)\big)(x)\\
&=R_{x_0}\big((z,y)\to \Delta_2R_{y_0}(u(z,\cdot))(y)\big)(x)\\
&=\Delta_2R_{x_0}\big((z,y)\to R_{y_0}(u(z,\cdot))(y)\big)(x)\\
&=\Delta_2U(x,y).
 \end{align*}
 Then with the representation of the Laplacian in radial coordinates (see(\ref{equ:radiallaplace})) we have: 
\begin{align*}
\frac{\partial^2 U}{\partial r^2}+\frac{A'(r)}{A(r)} \frac{\partial U}{\partial r}=\frac{\partial^2 U}{\partial s^2}+\frac{A'(s)}{A(s)} \frac{\partial U}{\partial s}.
\end{align*}
If we set $F(r,s)=U(r,s)-U(s,r)$ we obtain:
\begin{align}\label{eq:one}
\frac{\partial^2 F}{\partial r^2}+\frac{A'(r)}{A(r)} \frac{\partial F}{\partial r}&-\Big(\frac{\partial^2 F}{\partial s^2}+\frac{A'(s)}{A(s)} \frac{\partial F}{\partial s}\Big)=0,\\
\label{eq:asym}F(r,s)=&-F(s,r).
\end{align}
Our goal is it now to show that $F\equiv 0$. Since $F(r,r)=0$ is sufficient to show that all partial derivatives of $F$ vanish.
 We have:
\begin{align*}
 A'(r)\frac{\partial F}{\partial r}\frac{\partial F}{\partial s}=& \frac{\partial}{\partial r}\Big( A(r)\frac{\partial F}{\partial r}\frac{\partial F}{\partial s}\Big)-A(r)\frac{\partial^2 F}{\partial^2 r}\frac{\partial F}{\partial s}\\
 &-A(r)\frac{\partial F}{\partial r}\frac{\partial^2 F}{\partial s\partial r},
\end{align*}
and
$$\frac{\partial}{\partial s}\big(\frac{\partial F}{\partial r}\big )^2=2\frac{\partial F}{\partial r}\frac{\partial^2 F}{\partial s\partial r}, \quad\frac{\partial}{\partial s}\big(\frac{\partial F}{\partial s}\big )^2=2\frac{\partial F}{\partial s}\frac{\partial^2 F}{\partial s^2}.$$
 Therefore multiplying (\ref{eq:one}) by $2A(r)\frac{\partial F}{\partial s}$ we obtain:
\begin{align}\label{eq:tri1}
-A(r)\frac{\partial}{\partial s}\Big ( \big (\frac{\partial F}{\partial r} \big)^2+\big (\frac{\partial F}{\partial s}\big)^2\Big)+2\frac{\partial}{\partial r}\Big (A(r)\frac{\partial F}{\partial r}\frac{\partial F}{\partial s}\Big)\\
-2\frac{A'(s)A(r)}{A(s)}\Big(\frac{\partial F}{\partial s}\Big )^2=0.\nonumber
\end{align}
Now set $$L_1:=A(r)\Big ( \big (\frac{\partial F}{\partial r} \big)^2+ \big (\frac{\partial F}{\partial s} \big)^2\Big)$$
and
$$L_2:= 2\Big (A(r)\frac{\partial F}{\partial r}\frac{\partial F}{\partial s}\Big).$$
 Let $C>0$ be arbitrary and consider the line $r+s=C$. We want to integrate the formula (\ref{eq:tri1}) over the triangle $D$ with oriented boundary $\partial D=OMN$  (see Figure \ref{fig:1}), where $O=(0,0)$, $M=(\frac{C}{2},\frac{C}{2})$ and $N=(0,C)$, using Stokes theorem. With this we then show $F$ vanishes on $D$.
For this we first need the check that the expressions in (\ref{eq:tri1}) have no singularities in $D$. The critical term is $2\frac{A'(s)A(r)}{A(s)}.$
To rule out such a singularity let $r\leq s$ then since $A$ is monotonous increasing we have  $\frac{A'(s)A(r)}{A(s)}\leq A'(s)$ and $A'(0)=1$ hence we have no singularity at $O$. 
Using  Stokes theorem and equation (\ref{eq:tri1})  we get:
\begin{align}
\label{eq:tri0}\iint_D \frac{2A(r)A'(s)}{A(s)}\Big(\frac{\partial F}{\partial s}\Big )^2\,dr\,ds&=\iint_D\frac{\partial L_2}{\partial r}-\frac{\partial L_1}{\partial s}\,dr\wedge ds\\
&=\int_Dd(L_1dr+L_2ds)\nonumber\\ 
&=\int_{\partial D} L_1dr+L_2 ds.\nonumber
\end{align}
\begin{figure}[ht]
\centering
\caption{The triangle $D$ with oriented boundary $\partial D=OMN$.}
\label{fig:1}
\begin{tikzpicture}

  \draw[->] (-1,0) -- (5,0);
  \draw[->] (0,-1) -- (0,5);
  \draw (0,3)-- (3,0);
  \draw (1.5,1.5)-- (0,0);

    \draw[-{Stealth[length=3mm]}] (1.5,1.5) -- (0.75,2.25);
   \draw[-{Stealth[length=3mm]}] (0,0) -- (0.75,0.75);
     \draw[-{Stealth[length=3mm]}] (0,3) -- (0,1.5);
  \draw[fill=black](0,0)circle(1pt);
  \draw[fill=black](0,3)circle(1pt);
    \draw[fill=black](3,0)circle(1pt);
  \draw[fill=black](1.5,1.5)circle(1pt);
  \node [left] at (0,-0.2) {$O$};
  \node [left] at (0,3) {$N$};
  \node [right] at (1.5,1.5) {$M$};
  \node [below] at (5,0) {$r$};
   \node [left] at (0,5) {$s$};
   \node [right] at (0.35,1.5) {$D$};

  \end{tikzpicture}
\end{figure}
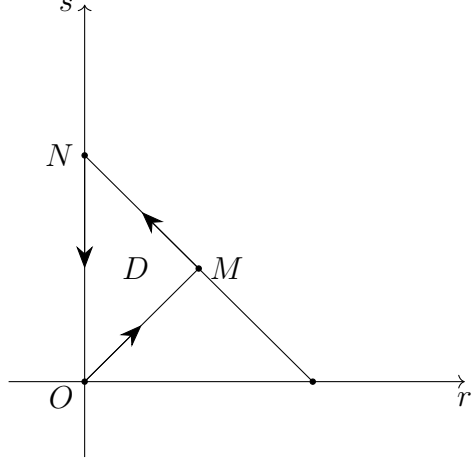

We have to break the path along the boundary into the three lines. First consider the line $r=s$ parameterised by the curve $\gamma_1(t)=(t,t)$ ending at $M$ denoted by $OM$. Then we have $\dot{\gamma}_1=(1,1)$ and therefore: 
\begin{align}\label{eq:tri2}
\int_{OM} L_1dr+L_2 ds=&\int_0^{C/2}A(t)\Biggl ( \big (\frac{\partial F}{\partial r}(t,t) \big)^2 +\big (\frac{\partial F}{\partial s}(t,t) \big)^2\\
&+2\big (\frac{\partial F}{\partial r}(t,t)\frac{\partial F}{\partial s}(t,t)\big )\Biggr )\,dt.\nonumber
\end{align}
Since $F(\gamma_1(t))=F(t,t)=0$ for all $t\geq 0$ we have 
\begin{align}\label{eq:1}
0=DF(\gamma_1(t))\cdot \dot{\gamma_1}(t)=\frac{\partial F}{\partial r}(t,t) +\frac{\partial F}{\partial s}(t,t)\quad \forall t\geq 0,
\end{align}
hence
\begin{align*}
\Big( \frac{\partial F}{\partial r} (t,t)+ \frac{\partial F}{\partial s}(t,t)\Big )^2=0.
\end{align*}
From this we conclude that the integral (\ref{eq:tri2}) vanishes.\\
Next we consider the line $ON$. We have that $A(r)=0$ therefore $L_1=0=L_2$ on $ON$ and 
$$\int_{ON} L_1dr+L_2 ds=0.$$

Lastly we consider the curve jointing $N$ and $M$ given by $\gamma_2(t)=(t,C-t)$. Then we have $\dot{\gamma}_2(t)=(1,-1)$ and obtain:
\begin{align*}
\int_{MN} L_1dr+L_2 ds&= \int_{C/2}^0 2\Big (A(t)\frac{\partial F}{\partial r}(t,C-t)\frac{\partial F}{\partial s}(t,C-t)\Big)\\
&-A(t)\Big ( \big (\frac{\partial F}{\partial r}(t,C-t) \big)^2 +\big (\frac{\partial F}{\partial s}(t,C-t) \big)^2\Big)\,dt\\
&= \int_0^{C/2}  A(t)\Big (\frac{\partial F}{\partial r}(t,C-t)-\frac{\partial F}{\partial s}(t,C-t)\Big )^2\,dt.
\end{align*}
Now we have using (\ref{eq:tri0})
\begin{align*}
 \int_0^{C/2} & A(t)\Big (\frac{\partial F}{\partial r}(t,C-t)-\frac{\partial F}{\partial s}(t,C-t)\Big )^2\,dt\\
&+\iint_D \frac{2A(r)A'(s)}{A(s)}\big(\frac{\partial F}{\partial r}\big )^2\,dr\,ds
=0.
\end{align*}
Since $A'(s)\geq 0$ both integrals are non negative. This implies that 
\begin{align}\label{eq:2}
0=\frac{\partial F}{\partial r}(t,C-t)-\frac{\partial F}{\partial s}(t,C-t)=DF(\gamma_2(t))\cdot\dot{\gamma}_2(t)\quad \forall t\geq 0.
\end{align}
Now since $C>0$ is arbitrary $(\ref{eq:1})$ together with $(\ref{eq:2})$ implies that all partial derivatives of $F$ vanish and therefore that $F$ is constant on the left side of the line $(t,t)$. Since $F(r,r)=0$ we conclude $F(s,r)=0$ on the left side of the line $(t,t)$. Since $F$ is antisymmetric, see equation (\ref{eq:asym}), the same holds true for the the rest of $\R^2_+$ hence the claim follows. 

\end{proof}
\begin{folg}\label{folg:meanvalue}
Under the conditions and with the notations of the proof of Lemma \ref{lemma:meanvalue} we have that $U(r,0)=U(0,r)$ for all $r\geq 0$ hence we obtain:
\begin{align}\label{eq:conclusion}
 M_{y_0}(u(x_0,\cdot))(r)=M_{x_0}(u(\cdot,y_0))(r).
 \end{align}
\end{folg}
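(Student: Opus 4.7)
The plan is to deduce the corollary directly from Lemma \ref{lemma:meanvalue} by specializing one of the two radii to zero, after reconciling the notational subtlety that $\vol(S(p,0)) = 0$.

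First I would observe that the proof of Lemma \ref{lemma:meanvalue} actually established $F(r,s) = U(r,s) - U(s,r) \equiv 0$ on $\R^2_{\geq 0}$. Setting $s = 0$ in this identity immediately gives $U(r,0) = U(0,r)$ for every $r \geq 0$, which is the first assertion of the corollary.

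Next I would identify the boundary values of $U$ with single mean value expressions. The natural interpretation at $s = 0$ proceeds by continuity: since $u$ is $C^2$, the inner average $\frac{1}{\vol(S(y_0,s))}\int_{S(y_0,s)} u(z_1,z_2)\,dz_2$ tends to $u(z_1,y_0)$ as $s \downarrow 0$, whence
\[
U(r,0) = \frac{1}{\vol(S(x_0,r))}\int_{S(x_0,r)} u(z_1,y_0)\,dz_1 = M_{x_0}(u(\cdot,y_0))(r).
\]
The symmetric computation with the two variables interchanged yields $U(0,r) = M_{y_0}(u(x_0,\cdot))(r)$. Combining these identifications with the symmetry $U(r,0) = U(0,r)$ produces equation (\ref{eq:conclusion}).

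The only subtle point is the degenerate-sphere reinterpretation at radius zero, which is purely notational and resolved by the continuity of $u$; there is no genuine analytic obstacle here, the corollary being essentially an immediate reading off of the Lemma.
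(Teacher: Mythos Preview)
Your argument is correct and matches the paper's approach exactly: the corollary is stated without proof, being an immediate consequence of the identity $F\equiv 0$ established in Lemma \ref{lemma:meanvalue}, specialised to $s=0$ together with the continuity interpretation of the mean value at radius zero. There is nothing to add.
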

With a classical Lemma by Willmore \cite[p.249]{willmore1996riemannian} one can deduce a near equivalence in Corollary \ref{folg:meanvalue}. 

\begin{folg}
Let $u:X\times X\to \R$ be a smooth function such that equation (\ref{eq:conclusion}) holds for a small neighbourhood of  $(x_0,y_0)\in X\times X$ and all small $r>0$ then:
$$\Delta_1u(x_0,y_0)=\Delta_2u(x_0,y_0).$$
\end{folg}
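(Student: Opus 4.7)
The plan is to extract the second-order information from the hypothesis by invoking the classical Willmore expansion of the spherical mean: for any smooth function $f$ on an $n$-dimensional Riemannian manifold and any point $p\in X$,
\[
M_p f(r) = f(p) + \frac{r^2}{2n}(\Delta f)(p) + O(r^4)
\]
as $r\to 0^+$. This expansion is a purely local statement, and matching the $r^2$ coefficients of the two sides of (\ref{eq:conclusion}) should directly deliver the desired identity of the Laplacians at $(x_0,y_0)$.

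Concretely, I would apply the expansion first to the smooth function $u(x_0,\cdot):X\to\R$ at the point $y_0$, obtaining
\[
M_{y_0}(u(x_0,\cdot))(r) = u(x_0,y_0) + \frac{r^2}{2n}\Delta_2 u(x_0,y_0) + O(r^4),
\]
and then to $u(\cdot,y_0):X\to\R$ at $x_0$, obtaining
\[
M_{x_0}(u(\cdot,y_0))(r) = u(x_0,y_0) + \frac{r^2}{2n}\Delta_1 u(x_0,y_0) + O(r^4).
\]
By hypothesis the left-hand sides of these two asymptotic identities agree for all sufficiently small $r>0$. Subtracting and dividing by $r^2$ produces
\[
\frac{1}{2n}\bigl(\Delta_1 u(x_0,y_0) - \Delta_2 u(x_0,y_0)\bigr) + O(r^2) = 0,
\]
and letting $r\to 0^+$ isolates the coefficient and yields $\Delta_1 u(x_0,y_0)=\Delta_2 u(x_0,y_0)$.

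The only delicate point is the validity of the Willmore expansion, which is precisely the content of the classical lemma cited from \cite[p.~249]{willmore1996riemannian}; here it is enough that $u$ be of class $C^4$ in a neighbourhood of $(x_0,y_0)$, which is guaranteed by the smoothness hypothesis. Notice that the harmonicity of $X$ plays no role in this step, since the mean value operators and the expansion are defined on any Riemannian manifold; the role of the neighbourhood assumption is just to ensure that the partial functions $u(x_0,\cdot)$ and $u(\cdot,y_0)$ can be expanded with a uniform remainder. Once this classical ingredient is in hand, the entire argument reduces to a two-line comparison of Taylor coefficients, so I do not anticipate any real obstacle beyond correctly citing the Willmore expansion.
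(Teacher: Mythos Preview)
Your proposal is correct and essentially identical to the paper's own proof: both invoke the Willmore expansion $M_x f(r)=f(x)+\frac{r^2}{2n}\Delta f(x)+O(r^4)$ from \cite[p.~249]{willmore1996riemannian}, apply it to the two partial functions $u(\cdot,y_0)$ and $u(x_0,\cdot)$, and conclude by matching the $r^2$ coefficients.
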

\begin{proof}
We have by \cite[p.249]{willmore1996riemannian} for $f\in C^{\infty}(X)$, $x\in X$ and $r>0$:
\begin{align*} 
 M_x(f)(r)=f(x)+\frac{1}{2n}\Delta f(x) r^2+ O(r^4)\quad\text{ for } r\to 0,
 \end{align*}
 where $n=\operatorname{dim} X$.
Applying this to $u$ yields:
\begin{align*}
 M_{x_0}(u(\cdot,y_0))(r)=u(x_0,y_0)+\frac{1}{2n}\Delta_1 u(x_0,y_0) r^2+ O(r^4)\quad\text{ for } r\to 0,\\
 M_{y_0}(u(x_0,\cdot))(r)=u(x_0,y_0)+\frac{1}{2n}\Delta_2 u(x_0,y_0)  r^2+ O(r^4)\quad\text{ for } r\to 0,
\end{align*}
Since the terms on the left hand side coincide, we obtain the claim. 
\end{proof}

\section{The Shifted Wave Equation}
In this section we solve the shifted wave equation:
\begin{align*}
\varphi:X\times \R&\to\C\\
\frac{\partial^2}{\partial t^2}\varphi(x,t)&=(\Delta_x+\rho^2)\varphi(x,t)
\end{align*}
on $X$ with initial conditions $$\varphi(x,0)=f(x)\in C_c^{\infty}(X)$$ and $$\left.\frac{\partial}{\partial t}\right\vert_{t=0} \varphi(x,t)=g(x)\in C_c^{\infty}(X),$$ via the inverse Abel transform. This is analogous to \'{A}sgeirsson characterisation of the solutions of the wave equation on $\R^n$ \cite{MR1513094} and generalises work on non compact symmetric spaces and Damek-Ricci spaces by \cite{Helgason1959}, \cite{MR1923488} and  \cite{Anker2013} respectively. The methods used are to a large part identical and rely heavily on \cite[Theorem 3.8]{PS15} and Corollary \ref{folg:meanvalue}. Where our approach differs is in that we do not have an explicit formula for the inverse dual Abel transform and hence need to rely on the local infectivity of the dual Abel transform shown in \cite[Theorem 3.8]{PS15} to obtain the existence of solutions and that they posses finite speed of propagation.
\begin{lemma}\label{lemma:horo}
Let $x_0\in X$,  $v\in S_{x_0}X$
and $u:X\times \R \to \C$ be a $C^2(X\times \R)$ function. Then for the function $U:X\times X\to \C$ defined by by $U(x,y)=e^{-\rho b_v(y)}u(x,b_v(y))$ the Laplacian $\Delta_2$ of $U$ with respect to the second variable is given by
\begin{align*}
\Delta_2 U(x,y)=e^{-\rho b_v(y)} (\frac{\partial^2}{\partial t^2}-\rho^2) u(x,\cdot))\circ b_v(y).
\end{align*}
\end{lemma}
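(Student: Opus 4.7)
The plan is to factor $U(x,\cdot)$ through the Busemann function and then reduce to the one-variable computation via Lemma~\ref{lemma:Lapace}. Fix $x\in X$ and set $w(t):=u(x,t)$, so that $w\in C^{2}(\R)$. Define $\phi:\R\to\C$ by $\phi(t):=e^{-\rho t}w(t)$. Then $U(x,y)=\phi(b_v(y))=\phi\circ b_v(y)$, so the $y$-Laplacian of $U$ is the Laplacian of a composition of a single-variable function with $b_v$, which is exactly the setting of Lemma~\ref{lemma:Lapace}.

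Applying that lemma yields
\begin{align*}
\Delta_2 U(x,y) \;=\; \phi''(b_v(y))\,\lVert \operatorname{grad} b_v(y)\rVert_g^{2}\;+\;\phi'(b_v(y))\,\Delta b_v(y).
\end{align*}
Now I use two standard facts already recorded earlier in the paper: the Busemann function is a distance-like function with unit gradient, $\lVert \operatorname{grad} b_v\rVert_g\equiv 1$, and on a harmonic manifold with horospherical mean curvature $h=2\rho$ one has $\Delta b_v=h=2\rho$ (cited from \cite{BusemannHarmonic}). Substituting gives
\begin{align*}
\Delta_2 U(x,y) \;=\; \phi''(b_v(y))+2\rho\,\phi'(b_v(y)).
\end{align*}

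What remains is the routine one-variable calculation. Differentiating $\phi(t)=e^{-\rho t}w(t)$ twice produces $\phi'(t)=e^{-\rho t}(w'(t)-\rho w(t))$ and $\phi''(t)=e^{-\rho t}(w''(t)-2\rho w'(t)+\rho^{2}w(t))$; adding $2\rho\phi'$ cancels the cross terms and leaves
\begin{align*}
\phi''(t)+2\rho\,\phi'(t) \;=\; e^{-\rho t}\bigl(w''(t)-\rho^{2}w(t)\bigr).
\end{align*}
Evaluated at $t=b_v(y)$ this is precisely $e^{-\rho b_v(y)}\bigl(\tfrac{\partial^{2}}{\partial t^{2}}-\rho^{2}\bigr)u(x,\cdot)\circ b_v(y)$, giving the claim. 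There is no real obstacle here; the only point to keep track of is that the $-\rho^{2}$ on the right arises from the algebraic identity $\phi''+2\rho\phi'=e^{-\rho t}(w''-\rho^{2}w)$, which is exactly the conjugation of the operator $\tfrac{\partial^{2}}{\partial t^{2}}+2\rho\tfrac{\partial}{\partial t}$ coming from (\ref{eq:horolaplace}) by the weight $e^{-\rho t}$.
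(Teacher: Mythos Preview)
Your proof is correct and follows essentially the same approach as the paper's: both factor $U(x,\cdot)$ as a single-variable function composed with $b_v$, apply the horospherical Laplacian formula (you invoke Lemma~\ref{lemma:Lapace} directly with $\lVert\operatorname{grad} b_v\rVert_g=1$ and $\Delta b_v=2\rho$, while the paper cites the already-specialized form~(\ref{eq:horolaplace})), and then carry out the identical one-variable computation $\phi''+2\rho\phi'=e^{-\rho t}(w''-\rho^2 w)$.
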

\begin{proof}
Define $h:X\times\R\to \C$ by $h(x,t)=e^{-\rho t}u(x,t)$, then 
by the representation of the Laplacian in horospherical coordinates  (\ref{eq:horolaplace}) the  Laplacian with respect to the second variable can be expressed by 
\begin{align}\label{eq:horo1}
\Delta_2 U(x,y)=\big(\frac{\partial^2}{\partial t^2}h(x,\cdot)+2\rho \frac{\partial}{\partial t}h(x,\cdot)\big)\circ b_v(y).
\end{align}
With
\begin{align*}
\frac{\partial}{\partial t}h(x,t)&=-\rho e^{-\rho t} u(x,t)+e^{-\rho t}\frac{\partial}{\partial t} u(x,t),\\
\frac{\partial^2}{\partial t^2}h(x,t)&=\rho^2  e^{-\rho t} u(x,t) -2\rho  e^{-\rho t}\frac{\partial}{\partial t} u(x,t)+e^{-\rho t}\frac{\partial^2}{\partial t^2} u(x,t).
\end{align*}
We get:
\begin{align}
\frac{\partial^2}{\partial t^2}h(x,t)+2\rho \frac{\partial}{\partial t}h(x,t)&=\rho^2  e^{-\rho t} u(x,t) -2\rho  e^{-\rho t}\frac{\partial}{\partial t} u(x,t)\nonumber\\
&+e^{-\rho t}\frac{\partial^2}{\partial t^2} u(x,t)-2\rho^2 e^{-\rho t} u(x,t)\nonumber\\
&+2\rho e^{-\rho t}\frac{\partial}{\partial t} u(x,t)\nonumber\\
&=e^{-\rho t}\big(\frac{\partial^2}{\partial t^2} u(x,t)-\rho^2 u(x,t)\big)\nonumber\\
&=e^{-\rho t} (\frac{\partial^2}{\partial t^2}-\rho^2) u(x,t).\label{eq:horo2}
\end{align}
Now plugging (\ref{eq:horo2}) into (\ref{eq:horo1}) yields the claim. 
\end{proof}

\begin{satz}\label{thm:wave1}
Let $\varphi:X\times \R\to\C$ be a $C^{\infty}$ solution of the shifted wave equation 
\begin{align*}
\frac{\partial^2}{\partial t^2}\varphi(x,t)=(\Delta_x+\rho^2)\varphi(x,t)
\end{align*}
on $X$ with initial conditions $\varphi(x,0)=f(x)\in C_c^{\infty}(X)$ and $$\left.\frac{\partial}{\partial t}\right\vert_{t=0} \varphi(x,t)=g(x)\in C_c^{\infty}(X)$$
then
$$\varphi(x,t)=(a)^{-1}((M_xf)\circ d(x_0,\cdot))(\lvert t\rvert )+\int_0^{\lvert t \rvert}(a)^{-1}((M_xg)\circ d(x_0,\cdot))(s)\,ds,$$
where $a$ is the dual Abel transform on $X$ based at a point $x_0\in X$. 
\end{satz}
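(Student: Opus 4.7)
The plan is to split $\varphi=\varphi_e+\varphi_o$ into its parts even and odd in $t$, then reduce the odd case to the even one. By linearity and the invariance of the shifted wave equation under $t\mapsto -t$, both $\varphi_e(x,t)=(\varphi(x,t)+\varphi(x,-t))/2$ and $\varphi_o(x,t)=(\varphi(x,t)-\varphi(x,-t))/2$ solve the equation, with initial data $(f,0)$ and $(0,g)$ respectively. It therefore suffices to show that each piece produces one of the two summands in the claimed formula.

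\textbf{Even part.} Fix $x\in X$ and $v\in S_{x_0}X$, and define $U\colon X\times X\to\C$ by
$$U(z_1,z_2)=e^{-\rho b_v(z_2)}\varphi_e(z_1,b_v(z_2)).$$
Lemma \ref{lemma:horo} applied to $u(z_1,t)=\varphi_e(z_1,t)$, combined with the shifted wave equation $(\partial_t^2-\rho^2)\varphi_e=\Delta_{z_1}\varphi_e$, yields $\Delta_2 U=\Delta_1 U$. Corollary \ref{folg:meanvalue} then gives, for each $r\geq 0$,
$$M_{x_0}(U(x,\cdot))(r)=M_x(U(\cdot,x_0))(r).$$
Since $b_v(x_0)=0$ we have $U(\cdot,x_0)=f$, so the right-hand side equals $M_xf(r)$. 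On the left, setting $h(t):=\varphi_e(x,t)$ (a smooth even function of $t$), the integrand over $S(x_0,r)$ is exactly $j(h)(y)=e^{-\rho b_v(y)}h(b_v(y))$, and therefore
$$M_{x_0}(U(x,\cdot))(r)=M_{x_0}(j(h))(r)=a(h)(z)$$
for any $z$ with $d(x_0,z)=r$. Hence $a(\varphi_e(x,\cdot))=(M_xf)\circ d(x_0,\cdot)$, and inverting via \cite[Theorem 3.8]{PS15} produces the first summand
$$\varphi_e(x,t)=a^{-1}\bigl((M_xf)\circ d(x_0,\cdot)\bigr)(|t|),$$
since $\varphi_e(x,\cdot)$ and $a^{-1}$ both return even functions of $t$.

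\textbf{Odd part.} Set $\psi(x,t):=\partial_t\varphi_o(x,t)$. Differentiating the equation in $t$ shows $\psi$ again solves the shifted wave equation; moreover $\psi$ is even in $t$ with initial data $\psi(x,0)=g(x)$ and $\partial_t\psi(x,0)=(\Delta_x+\rho^2)\varphi_o(x,0)=0$. The preceding argument applied to $\psi$, with $g$ in place of $f$, gives $\psi(x,t)=a^{-1}((M_xg)\circ d(x_0,\cdot))(|t|)$, and integrating from $0$ using $\varphi_o(x,0)=0$ produces the second summand. Adding the two pieces recovers $\varphi$.

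The principal technical point is the identification $M_{x_0}(U(x,\cdot))(r)=a(h)(z)$, which is what converts the mean-value symmetry of Corollary \ref{folg:meanvalue} into an Abel-transform identity on $\R$; this rests on the matching definitions of $j$, $a$, and $M_{x_0}$, on the analyticity of $b_v$ (so that $U$ is $C^2$), and on \cite[Theorem 3.8]{PS15} to invert $a$ on smooth radial functions around $x_0$. The choice of $v\in S_{x_0}X$ is immaterial by the Remark after (\ref{eq:abel-fourier}).
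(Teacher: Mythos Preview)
Your proof is correct and follows essentially the same route as the paper: decompose the solution according to parity in $t$, apply Lemma~\ref{lemma:horo} to convert the shifted wave equation into $\Delta_1 U=\Delta_2 U$, invoke Corollary~\ref{folg:meanvalue} to obtain $a(\varphi_e(x,\cdot))=(M_xf)\circ d(x_0,\cdot)$, and invert via \cite[Theorem 3.8]{PS15}; for the odd part you differentiate in $t$ and integrate back, exactly as the paper does. The only cosmetic difference is that you make the even/odd splitting $\varphi_e,\varphi_o$ explicit from the given $\varphi$, whereas the paper phrases it as ``consider a solution $\varphi_1$ with data $(f,0)$ \dots we can assume $\varphi_1$ is even''; your formulation is in fact slightly cleaner logically.
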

\begin{proof}
Let $x_0\in X$ and  $v\in S_{x_0}X$. And denote by $\Delta_i$ the Laplacian with respect to the $i$-th variable.
First consider a solution to the wave equation $\varphi_1(x,t)$ with initial conditions $\varphi_1(x,0)=f(x)$ and $\frac{\partial}{\partial t} \varphi_1(x,0)=0$ for all $x\in X$. Because of this we can assume that $\varphi_1$ is even in $t$. Define the function $$\Phi_1:X\times\ X \to \C$$ by $$\Phi_1(x,y):=e^{-\rho b_v(y)} \varphi_1(x,b_v(y)).$$
Then since $\varphi_1(x,t)$  is a solution of the wave equation we have:
\begin{align*}
\Delta_1 \Phi_1(x,y) &=e^{-\rho b_v(y)} \Delta_1 \varphi_1(x,b_v(y))\\
&=e^{-\rho b_v(y)}\Big( \big((\frac{\partial^2}{\partial t^2}-\rho^2) \varphi_1(x,\cdot)\big)\circ b_v(y)\Big ).
\end{align*}
 Furthermore by Lemma \ref{lemma:horo} we have that:
 \begin{align*}
 \Delta_2 \Phi_1(x,y)&=e^{-\rho b_v(y)}\Big( \big(\frac{\partial^2}{\partial t^2}-\rho^2) \varphi_1(x,\cdot)\big)\circ b_v(y)\Big).
 \end{align*}
 Therefore:
$$\Delta_1\Phi_1=\Delta_2 \Phi_1.$$
 Now we can apply  Corollary \ref{folg:meanvalue}  above and  obtain that for every pair $x,y \in X$ 
\begin{align*}
a(t\mapsto \varphi_1(x,t))(y)&=M_{x_0}(e^{-\rho b_v(\cdot)}\varphi_1(x,b_v(\cdot)))\circ d(x_0,y)\\
&=M_{x_0}(\Phi_1(x,\cdot))\circ d(x_0,y)\\
&=M_{x}(\Phi_1(\cdot,x_0))\circ d(x_0,y)\\
&=M_{x}(e^{-\rho b_v(x_0)}\varphi_1(\cdot,b_v(x_0))\circ d(x_0,y)\\
&=M_x(f)\circ d(x_0,y),
\end{align*}
 where $a:C^{\infty}_{\text{even}}(\R)\to C^{\infty}(X,x_0)$ denotes the dual Abel transform with the choice of $v\in S_{x_0}X$ as above.
Hence by Theorem 3.8 in  \cite{PS15} we get for every $t \in \R$ and $x\in X$:
$$ \varphi_1(x,t)=a^{-1}\big(M_x(f)\circ d(x_0,\cdot)\big)(\lvert t\vert ).$$
Now let $\varphi_2$ be a solution of the wave equation on $X$ with $\varphi_2(x,0)=0$ and $\frac{\partial}{\partial t} \varphi_2(x,0)=g(x)$ for all $x\in X$. 
Then the initial conditions imply: 
$$ \frac{\partial^2}{\partial t^2}\varphi_2(x,0)=(\Delta +\rho^2)\varphi_2(x,0)=0,$$
hence we can assume that $\frac{\partial}{\partial t} \varphi_2(x,t)$ is for all $x\in X$ a smooth even function in $t$. 
Define
 $$\Phi_2(x,y):=e^{-\rho b_v(y)}\frac{\partial}{\partial t} \varphi_2(x,b_v(y)).$$
 Since $\varphi_2$ is a solution of the wave equation 
 \begin{align*}
 \Delta_1 \Phi_2(x,y)=e^{-\rho b_v(y)}\Big( \big(\frac{\partial^2}{\partial t^2}-\rho^2) \frac{\partial}{\partial t}\varphi_2(x,\cdot)\big)\circ b_v(y)\Big)
 \end{align*}
 and by Lemma \ref{lemma:horo} 
 \begin{align*}
 \Delta_2 \Phi_2(x,y)=e^{-\rho b_v(y)} \Big(\big(\frac{\partial^2}{\partial t^2}-\rho^2) \frac{\partial}{\partial t}\varphi_2(x,\cdot)\big)\circ b_v(y)\Big).
 \end{align*}
 Hence
 $$\Delta_1\Phi_2=\Delta_2  \Phi_2.$$
 Now we can again apply Corollary \ref{folg:meanvalue} and  obtain that for every pair $x,y \in X$ 

\begin{align*}
a(t\mapsto \frac{\partial}{\partial t}\varphi_2(x,t))(y)&=M_{x_0}(e^{-\rho b_v(\cdot)}\frac{\partial}{\partial t}\varphi_2(x,b_v(\cdot)))\circ d(x_0,y)\\
&=M_{x_0}(\Phi_2(x,\cdot))\circ d(x_0,y)\\
&=M_{x}(\Phi_2(\cdot,x_0))\circ d(x_0,y)\\
&=M_{x}(e^{-\rho b_v(x_0)}\frac{\partial}{\partial t} \varphi_2(\cdot,b_v(x_0))\circ d(x_0,y)\\
&=M_x(g)\circ d(x_0,y).
\end{align*}
Now by Theorem 3.8 in  \cite{PS15}  and integrating with respect to time we have for $t\in \R$
$$\varphi_2(x,t)=\int_0^{\lvert t \rvert } a^{-1}(M_x(g)\circ d(x_0,\cdot))(s)\,ds.$$
Since the shifted wave equation is linear we obtain a solution to the shifted wave equation with $\varphi(x,0)=f(x)$ and $\frac{\partial}{\partial t}\varphi(x,t)=g(x)$ by $\varphi=\varphi_1+\varphi_2$. This yields the claim. 
\end{proof}
\begin{folg}
From the characterisation in the Theorem \ref{thm:wave1} it follows now that $\varphi$ is a unique solution to the initial data $f,g$ as above. 
\end{folg}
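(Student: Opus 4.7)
The plan is to deduce uniqueness as an immediate byproduct of the explicit representation formula in Theorem~\ref{thm:wave1}, using only linearity of the shifted wave equation. Suppose $\varphi_1,\varphi_2\in C^{\infty}(X\times\R,\C)$ are two solutions sharing the same Cauchy data $\varphi_i(x,0)=f(x)$ and $\partial_t\varphi_i(x,0)=g(x)$ with $f,g\in C^{\infty}_c(X)$. Setting $\psi:=\varphi_1-\varphi_2$ and using linearity of $\partial_t^2-\Delta_x-\rho^2$, one sees that $\psi$ is a smooth solution of the shifted wave equation with vanishing initial data $\psi(x,0)=0$ and $\partial_t\psi(x,0)=0$.

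Next I apply Theorem~\ref{thm:wave1} directly to $\psi$, reading its initial data as the zero functions in $C^{\infty}_c(X)$. Since the mean value operator $M_x$ is linear, $M_x(0)\equiv 0$, so the radial functions $(M_x 0)\circ d(x_0,\cdot)\in C^{\infty}(X,x_0)$ appearing in both terms of the formula vanish identically. Because the dual Abel transform $a:C^{\infty}_{\mathrm{even}}(\R)\to C^{\infty}(X,x_0)$ is a linear topological isomorphism by Theorem~3.8 of~\cite{PS15}, its inverse $a^{-1}$ sends the zero function to the zero function. Both summands in the representation of $\psi(x,t)$ therefore vanish, giving $\psi\equiv 0$ and hence $\varphi_1=\varphi_2$.

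There is no genuine obstacle in this argument: the heavy lifting (the symmetry of the mean value operator via Corollary~\ref{folg:meanvalue}, the conversion to an \'Asgeirsson-type identity through Lemma~\ref{lemma:horo}, and the inversion of the dual Abel transform) has already been absorbed into Theorem~\ref{thm:wave1}. The only thing one should remark on is that, although the explicit right-hand side in Theorem~\ref{thm:wave1} is written using an auxiliary base point $x_0\in X$ and a direction $v\in S_{x_0}X$, its value $\varphi(x,t)$ is independent of these choices, so the uniqueness conclusion is unambiguous.
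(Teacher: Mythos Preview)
Your argument is correct and matches the paper's intent: the paper states the corollary without proof because it is immediate from the representation formula in Theorem~\ref{thm:wave1}, and your linearity-plus-zero-data argument is exactly the standard way to spell this out. There is nothing to add.
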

Next we are going to show that a solution of the shifted wave equation has finite speed of propagation.
\begin{folg}\label{folg:support2}
Under the assumption of the Theorem \ref{thm:wave1} assume that $f,g$ have support in a geodesic ball of radius $R$ around $x_0\in X$ then
$$ \operatorname{supp} \varphi\subset \{(x,t)\in X\times\R\mid d(x_0,x)\leq R+\lvert t\rvert \}.$$
\end{folg}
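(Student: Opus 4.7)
The plan is to combine the representation in Theorem \ref{thm:wave1} with a support-preservation property of the dual Abel transform. Fix a point $(x,t)\in X\times\R$ with $d(x_0,x) > R+|t|$, and set $T := d(x_0,x) - R$, so that $|t| < T$.

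First, since $\operatorname{supp}(f) \subset \overline{B}(x_0,R)$, the mean $M_xf(r)$ vanishes whenever the sphere $S(x,r)$ fails to meet $\overline{B}(x_0,R)$; by the triangle inequality this is the case at least when $r < T$. Consequently the radial function $h_f(y) := M_xf(d(x_0,y))$ vanishes on the ball $B(x_0,T)$, and the same holds for $h_g(y) := M_xg(d(x_0,y))$.

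Next, I claim the following local support property of the dual Abel transform based at $x_0$: a function $h \in C^{\infty}(X,x_0)$ vanishes on $B(x_0,T)$ if and only if $a^{-1}(h) \in C^{\infty}_{\text{even}}(\R)$ vanishes on $(-T,T)$. The forward direction is immediate from the definition: if $u$ vanishes on $(-T,T)$, then $j(u)(y) = e^{-\rho b_v(y)} u(b_v(y))$ vanishes on the slab $\{|b_v| < T\}$. Since $b_v$ is $1$-Lipschitz with $b_v(x_0)=0$, every sphere $S(x_0,r)$ with $r < T$ is contained in $\{|b_v| < T\}$, so $a(u)(y)=M_{x_0}(j(u))(d(x_0,y))=0$ for $d(x_0,y)<T$. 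The reverse implication is the local injectivity encoded in \cite[Theorem 3.8]{PS15}.

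Applying this property to $h_f$ and $h_g$, both $a^{-1}(h_f)$ and $a^{-1}(h_g)$ vanish on $(-T,T)$. Because $|t| < T$ and the integration variable $s$ in the second summand runs over $[0,|t|] \subset [0,T)$, both terms of the representation for $\varphi(x,t)$ are zero, yielding $\varphi(x,t) = 0$. The main obstacle is justifying the converse direction of the support property: with no explicit inversion formula for $a$ at hand, one has to extract it from the topological-isomorphism statement of \cite[Theorem 3.8]{PS15}, exactly as the introduction flags. Once that is in place, the finite-speed-of-propagation statement reduces to the elementary geometric observation about when spheres around $x$ miss the support of the data.
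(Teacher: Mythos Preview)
Your argument is correct and follows essentially the same route as the paper's proof: both reduce to showing that the radial function $M_x(h)\circ d(x_0,\cdot)$ vanishes on a ball of radius $T=d(x_0,x)-R$, and then invoke the local injectivity of the dual Abel transform from \cite[Theorem~3.8]{PS15} to conclude that $a^{-1}$ of this function vanishes on $(-T,T)$. Your version is slightly more expansive in that you also prove the (unneeded) forward direction of the support biconditional, whereas the paper handles the boundary issue by inserting an auxiliary $\epsilon>0$; but the logical content is identical.
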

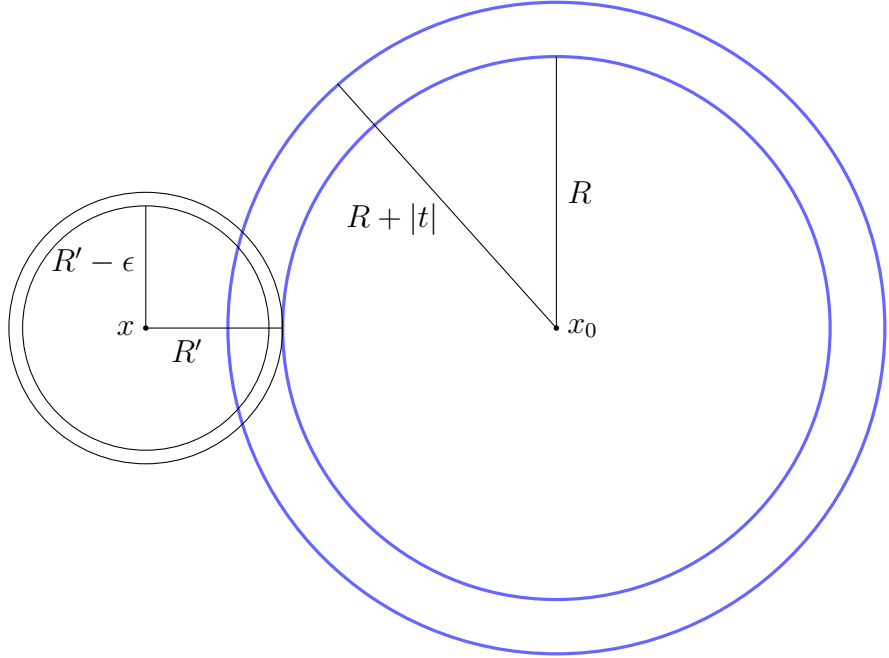
\begin{figure}[ht]
\centering
\begin{tikzpicture}[scale=1.8]
  
\draw[color=blue!60, very thick](0,0) circle (2cm);
\draw[color=blue!60, very thick](0,0) circle (2.4cm);
 \draw (-3 cm,0.0cm)circle (1cm);
  \draw (-3 cm,0.0cm)circle (0.9cm);
 \node[left] at (-3 cm,0.0cm){$x$};
  \fill[very thick,black]  (-3 cm,0.0cm)circle (0.02);

 \node[right] at (0,0) {$x_0$};

 \fill[very thick,black]  (0,0) circle (0.02);
\draw  (0 cm,0 cm)-- (0cm,2cm);
 \node[right] at (0 cm,1cm){$R$};
\draw  (0 cm,0 cm)-- (-1.6 cm,1.8cm);
 \node[below] at (-1.2 cm,1 cm){$R+\lvert t\rvert$};
\draw  (-3 cm,0.0 cm)-- (-2 cm,0.0 cm);
 \node[below] at (-2.7 cm,0 cm){$R'$};
 \draw  (-3 cm,0.0 cm)-- (-3cm,0.9 cm);
 \node[left] at (-3 cm,0.5 cm){$R'-\epsilon$};
 \end{tikzpicture}
   \caption{A sketch for the proof of Corollary \ref{folg:support2}.}
 \end{figure}

\begin{proof}
By Theorem \ref{thm:wave1} it is sufficient to prove that for $h\in C^{\infty}_c(X)$ with support $B(x_0,R)$ and $ d(x_0,x)> R+\lvert t\rvert $
\begin{align}\label{eq:defv}
v_x(t):=a^{-1}\big(M_x(h)\circ d(x_0,\cdot)\big)=0.
\end{align}
By the local injectivity of the dual Abel transform \cite[proof of Theorem 3.8]{PS15} we have that for $u:\R\to\R$ smooth and even
\begin{align}\label{eq:localinj}
a(u)\vert_{B(x_0,R)}=0 \Rightarrow u\vert_{[-R,R]}=0.
\end{align}
Now let $\epsilon>0$ arbitrary, $ d(x_0,x)> R+\lvert t\rvert $ and $R'=d(x_0,x)-R$ then
\begin{align}\label{eq:lemmasupp2}
a(v_x)(y)\overset{\text{(\ref{eq:defv})}}{=}M_x(h)\circ d(x_0,y)=0\quad \forall y\in B(x_0,R'-\epsilon).
\end{align}
Furthermore we have $R'=d(x_0,x)-R>\lvert t\rvert$ hence since $\epsilon>0$ is arbitrary we obtain from (\ref{eq:localinj}) and (\ref{eq:lemmasupp2}):
$$v_x(t)=0.$$
for all $(x,t)\in X\times \R$ with $d(x_0,x)> R+\lvert t\rvert $.
\end{proof}

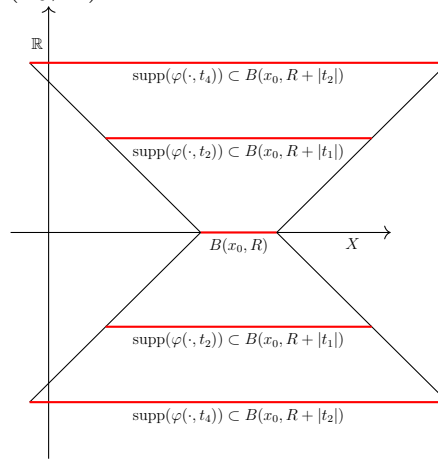
\begin{figure}[ht]
\caption{Finite propagation speed of a solution of the shifted wave equation with initial conditions supported in $B(x_0,R)$.}
\begin{tikzpicture}[scale=0.5, transform shape]
  \draw[->] (-3,0) -- (7,0);
  \draw[->] (-2,-6) -- (-2,6);
  \draw (-2.5,4.5)-- (2,0);
  \draw (8.5,4.5)-- (4,0);
  \draw[red,thick] (2,0)--(4,0);
      \draw[red,thick] (-0.5,2.5)--(6.5,2.5);
                  \draw[red,thick] (-2.5,4.5)--(8.5,4.5);
                  
                   \draw (-2.5,-4.5)-- (2,0);
  \draw (8.5,-4.5)-- (4,0);
      \draw[red,thick] (-0.5,-2.5)--(6.5,-2.5);
                  \draw[red,thick] (-2.5,-4.5)--(8.5,-4.5);

      \node [below] at (3,2.5) {$\operatorname{supp}(\varphi(\cdot,t_2))\subset B(x_0,R+\lvert t_1\rvert )$};
  
          \node [below] at (3,4.5) {$\operatorname{supp}(\varphi(\cdot,t_4))\subset B(x_0,R+\lvert t_2\rvert )$};
            \node [below] at (3,0) {$ B(x_0,R)$};

      \node [below] at (3,-2.5) {$\operatorname{supp}(\varphi(\cdot,t_2))\subset B(x_0,R+\lvert t_1\rvert )$};
       
          \node [below] at (3,-4.5) {$\operatorname{supp}(\varphi(\cdot,t_4))\subset B(x_0,R+\lvert t_2\rvert)$};
  \node [below] at (6,0) {$X$};
   \node [left] at (-2,5) {$\R$};

  \end{tikzpicture}
\end{figure}
\begin{bem}\label{folg:support}
The finite speed of propagation also follows from the general theory in \cite[Chapter 5]{MR0460898} or \cite[Chapter 2, Proposition 8.1]{TaylorPDE}  by choosing the canonical space time structure on $\R\times X$. See also \cite[Lemma 1.1]{BRANSON1991403}.
\end{bem}
 Next we provide an intrinsic prove of the existence of solution to the shifted wave equation without using general existence results mentioned in Remark \ref{rmk:xistenz}.
\begin{satz}\label{thm:existenzwave}
Let $f,g\in C^{\infty}_c(X)$ then the functions:
\begin{align*}
 \varphi_1(x,t)&=a^{-1}\big(M_x(f)\circ d(x_0,\cdot)\big)(\lvert t\vert )\\
 &\text{and}\\
 \varphi_2(x,t)&=\int_0^{\lvert t \rvert } a^{-1}(M_x(g)\circ d(x_0,\cdot))(s)\,ds
\end{align*}
are solutions of the shifted wave equation with initial condition 
\begin{align*}
\varphi_1(x,0)&=f(x)\\
\left.\frac{\partial}{\partial t}\right\vert_{t=0} \varphi_1(x,t)&=0\\
&\text{and}\\
\varphi_2(x,0)&=0\\
\left.\frac{\partial}{\partial t}\right\vert_{t=0} \varphi_2(x,t)&=g(x)
\end{align*}
respectively. 
Consequently $\varphi=\varphi_1+\varphi_2$ is a solution of the shifted wave equation with initial conditions 
 $\varphi(x,0)=f(x)$ and $\left.\frac{\partial}{\partial t}\right\vert_{t=0} \varphi(x,t)=g(x)$.
\end{satz}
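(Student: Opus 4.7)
The strategy is to verify directly that the proposed formulas are smooth, satisfy the initial conditions, and solve the shifted wave equation; in effect this reverses the logic of Theorem \ref{thm:wave1}. The two algebraic ingredients I would extract first are an intertwining relation between the dual Abel transform and the Laplacian, and a Darboux-type identity for the spherical mean viewed as a function of both center and radius.

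For the intertwining, I would apply Lemma \ref{lemma:horo} to the $x$-independent function $(x,t)\mapsto u(t)$ with $u \in C^{\infty}_{\mathrm{even}}(\R)$ to get $\Delta j(u) = j(u'' - \rho^2 u)$, and then use Lemma \ref{equivalenz} (which gives $\Delta R_{x_0}(\cdot) = R_{x_0}(\Delta\cdot)$) to conclude
\begin{align*}
(\Delta_y + \rho^2)\, a(u)(y) = a(u'')(y).
\end{align*}
For the Darboux identity, set $W(x,y) := M_x(f)(d(x_0,y))$. Since $W(x,\cdot)$ is radial around $x_0$, Lemma \ref{equivalenz} gives $\Delta_y W(x,y) = L_A(M_x f)(d(x_0,y)) = M_x(\Delta f)(d(x_0,y))$. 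By the remark after Lemma \ref{equivalenz}, the spherical mean commutes with $\Delta_x$ in the center as well, so $\Delta_x W(x,y) = M_x(\Delta f)(d(x_0,y)) = \Delta_y W(x,y)$.

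With these in hand, set $v_x := a^{-1}(W(x,\cdot))$, so that $\varphi_1(x,t) = v_x(|t|) = v_x(t)$ by evenness. The intertwining applied to $v_x$, followed by the Darboux identity and the (evident) commutation of $a$ with $\Delta_x$, yields
\begin{align*}
a(\partial_t^2 v_x)(y) = (\Delta_y + \rho^2)\,W(x,y) = (\Delta_x + \rho^2)\,W(x,y) = a\bigl((\Delta_x + \rho^2) v_x\bigr)(y),
\end{align*}
and injectivity of $a$ from Theorem 3.8 of \cite{PS15} delivers the shifted wave equation for $\varphi_1$. The initial conditions follow from $v_x(0) = a(v_x)(x_0) = W(x,x_0) = f(x)$ and $v_x'(0) = 0$ by evenness. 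For $\varphi_2$, defining $w_x := a^{-1}(M_x(g)\circ d(x_0,\cdot))$ and arguing identically gives $\partial_t^2 w_x = (\Delta_x+\rho^2)w_x$; then differentiating $\varphi_2 = \int_0^{|t|} w_x(s)\,ds$ with $w_x'(0)=0$ and passing $\Delta_x$ under the integral produces
\begin{align*}
\partial_t^2 \varphi_2(x,t) = w_x'(|t|) = \int_0^{|t|} w_x''(s)\,ds = (\Delta_x + \rho^2)\varphi_2(x,t),
\end{align*}
with $\varphi_2(x,0) = 0$ and $\partial_t\varphi_2(x,0) = w_x(0) = g(x)$.

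The main obstacle is the regularity analysis: in order to pull $\Delta_x$ through $a^{-1}$ and to differentiate under the integral, one needs joint smoothness of $v_x(t)$ and $w_x(t)$ in $(x,t)$, but $a^{-1}$ has no explicit formula and is only known as a topological isomorphism via Theorem 3.8 of \cite{PS15}. Once this joint smoothness is secured by continuity of $a^{-1}$ together with the smoothness of $x\mapsto M_x(f)\circ d(x_0,\cdot)$ as a family of radial functions around $x_0$, the rest of the argument reduces to the two algebraic identities above and the injectivity of $a$.
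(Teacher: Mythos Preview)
Your approach is correct and genuinely different from the paper's. You reverse the logic of Theorem~\ref{thm:wave1} by isolating two algebraic identities---the intertwining $(\Delta_y+\rho^2)a(u)=a(u'')$ and the Darboux relation $\Delta_x W=\Delta_y W$---and then rely on $a^{-1}$ being a continuous linear isomorphism between Fr\'echet spaces to obtain joint smoothness of $(x,t)\mapsto v_x(t)$. The paper instead expands $f$ and $g$ in Dirichlet eigenfunctions $\{\phi_k\}$ on a large ball $B(x_0,2R)$, uses the explicit identity $a^{-1}(\varphi_{\lambda_k,x_0})=\cos(\lambda_k\,\cdot)$ from \cite{PS15} to produce concrete series $\varphi_1=\sum_k a_k\phi_k(x)\cos(\lambda_k t)$ and $\varphi_2=\sum_k b_k\phi_k(x)\lambda_k^{-1}\sin(\lambda_k t)$, and proves uniform convergence of all needed series by hand via the Sobolev embedding and Weyl's law (Lemma~\ref{lemma:convergence}); the global statement then follows from the local one by local injectivity of $a$ and finite propagation speed. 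Your route is shorter and more conceptual once the Fr\'echet regularity is granted, while the paper's is fully self-contained, avoids soft functional-analytic input, and yields as a byproduct the explicit eigenfunction series (which the paper later links to the operator expression $\cos(t\sqrt{-\Delta_\rho})f+\sin(t\sqrt{-\Delta_\rho})(\sqrt{-\Delta_\rho})^{-1}g$). One caveat: your Darboux step needs the commutation $\Delta_x M_x(f)(r)=M_x(\Delta f)(r)$ in the \emph{center} variable, which is indeed Szab\'o's classical characterization of harmonicity (referenced in the remark after Lemma~\ref{equivalenz}), but it is not the statement that Lemma~\ref{equivalenz} itself proves, so you are tacitly importing that result.
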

\begin{proof}
Because $f$ and $g$ have compact support there exists an $R>0$ such that the support of $f$ and of $g$ is contained in the closed ball  $B(x_0,R)$.
We choose an orthonormal basis of eigenfunctions of the Dirichlet Laplacian on $B(x_0,2R)$, with respect to the $L^2$ norm on $B(x_0,2R)$, $\{\phi_k\}_{k\in\N}$ with $\Delta \phi_k=-\mu_k\phi_k$, $ 0\leq \mu_1\leq \mu_2\leq\cdots<\infty$ and $\mu_k=(\lambda_k^2+\rho^2)$ for some $\lambda_k\in \pm i[0,\rho]\cup \R$. First we observe that by Lemma \ref{equivalenz} for $x\in B(x_0,R)$ 
\begin{align}\label{eq:meanvalueseries}
M_x\phi_k(r)=\phi_k(x)\varphi_{\lambda_k}(r)\quad \forall r\leq R
\end{align}
 where $\varphi_{\lambda_k}$ is a eigenfunction of the operator $L_A$ (see Lemma \ref{equivalenz} for the definition) with $L_A \varphi_{\lambda_k}=-(\lambda_k^2+\rho^2)\varphi_{\lambda_k}$, $\varphi_{\lambda_k}(0)=1$ and $\lambda_k\in \pm i[0,\rho]\cup \R.$ 
Now we can represent $f$ and $g$ by a series in $\phi_k$: 
\begin{align*}
f(y)=\sum_{k=0}^{\infty} a_k\phi_k(y)\text{ and } g(y)=\sum_{k=0}^{\infty} b_k\phi_k(y),\forall y\in B(x_0,2R),\, a_k, b_k\in \C. 
\end{align*}
Using (\ref{eq:meanvalueseries}) we obtain for all $r\leq R$ and $x\in B(x_0,R)$
\begin{align*}
M_xf(r)=\sum_{k=0}^{\infty}a_k\phi_k(x)\varphi_{\lambda_k}(r)\text{ and  }  M_xg(r)=\sum_{k=0}^{\infty} b_k\phi_k (x)\varphi_{\lambda_k}(r).
\end{align*}
Applying the inverse dual Abel transform $a^{-1}$ yields, using that 
\begin{align*}
a^{-1}(\varphi_{\lambda_k}\circ d(x_0,\cdot))(\lvert t\rvert)&=a^{-1}(\varphi_{\lambda_k,x_0})(\lvert t\rvert )\\
&=\cos(\lambda_k t)
\end{align*}
 (see \cite[Proposition 3.4]{PS15}) and that $a^{-1}$ is linear, that:
\begin{align}
\label{eq:series1}a^{-1}\big(M_x(f)\circ d(x_0,\cdot)\big)(t)=\sum_{k=0}^{\infty}a_k\phi_k(x)\cos(\lambda_k t)\\
 a^{-1}\big(M_x(g)\circ d(x_0,\cdot)\big)(s)=\sum_{k=0}^{\infty}b_k\phi_k(x)\cos(\lambda_k s)\label{eq:series2}.
\end{align}
Therefore if we can show that (\ref{eq:series1}) converges uniformly in $x$ and $t$ we get:
\begin{align*}
\Delta \sum_{k=0}^{\infty}a_k\phi_k(x)\cos(\lambda_k t)&=\sum_{k=0}^{\infty}a_k\Delta\phi_k(x)\cos(\lambda_k t)\\
&=-\sum_{k=0}^{\infty}(\lambda_k^2+\rho^2)a_k\phi_k(x)\cos(\lambda_k t)
\end{align*}
and 
\begin{align*}
\frac{\partial^2}{\partial t^2}\sum_{k=0}^{\infty}a_k\phi_k(x)\cos(\lambda_k t)=-\sum_{k=0}^{\infty}\lambda_k^2a_k\phi_k(x)\cos(\lambda_k t).
\end{align*}
Hence $\varphi_1$ solves the shifted wave equation and satisfies the initial conditions $\varphi_1(x,0)=f$ and $\left.\frac{\partial}{\partial t}\right\vert_{t=0} \varphi_1(x,t)=0$ as one sees by (\ref{eq:series1}). 
Now suppose that (\ref{eq:series2}) converges uniformly in $x$ and $s$ then by integration we obtain:
\begin{align*}
\varphi_2(x,t)=\sum_{k=0}^{\infty}b_k\phi_k(x)\sin(\lambda_k t)\cdot\frac{1}{\lambda_k}
\end{align*}
where we interpret $\sin(\lambda_j t)\cdot\frac{1}{\lambda_j}=t$ if $\lambda_j=0$. 
Now applying the Laplacian yields:
\begin{align*}
\Delta \varphi_2(x,t)=-\sum_{k=0}^{\infty}(\lambda^2_k+\rho^2)b_k\phi_k(x)\cdot\sin(\lambda_kt)\frac{1}{\lambda_k}
\end{align*}
and we also get:
\begin{align*}
\frac{\partial^2}{\partial t^2} \varphi_2(x,t)=-\sum_{k=0}^{\infty}\lambda^2_kb_k\phi_k(x)\cdot\sin(\lambda_kt)\frac{1}{\lambda_k}.
\end{align*}
Therefore $\varphi_2$ satisfies the shifted wave equation, with the required initial conditions, as one can see by (\ref{eq:series2}).
Hence the proof would be complete if we show that (\ref{eq:series1}) and (\ref{eq:series2}) converge uniformly in both variables. This will follow from Lemma \ref{lemma:convergence}.
Under theses assumptions we have shown that $\varphi_1$ and $\varphi_2$ satisfy the theorem locally on the ball $B(x_0,R)$. If we now take $R'>R$ and repeat the construction above, we have by the local infectivity of the dual Abel transform  \cite[proof of Theorem 3.8]{PS15} that the series above coincide on $B(x_0,R)$. Therefore using the finite speed of propagation of the solution we can repeat the argument for a series $R_n\to \infty$  to obtain the theorem. 
\end{proof}
The lemma that finishes the proof of the theorem above is already contained in the proof of Theorem 3.8 in  \cite{PS15}.
\begin{lemma}\label{lemma:convergence}
Let $x_0\in X$, $R>0$  and $f\in C^{\infty}_c(X)$ such that the support of $f$ is contained in the closed ball $B(x_0,R)$ and  $\{\phi_k\}_{k\in\N}$ an orthonormal basis of eigenfunctions of the Dirichlet Laplacian on $B(x_0,R)$, with respect to the $L^2$ norm on $B(x_0,r)$ with $\Delta \phi_k=-\mu_k\phi_k$, $ 0\leq \mu_1\leq \mu_2\leq\cdots<\infty$ and $\mu_k=(\lambda_k^2+\rho^2)$ for some $\lambda_k\in \pm i[0,\rho]\cup \R.$ 
Furthermore let for $a_k\in \C$ the Fourier decomposition of $f$ be given by $f=\sum_{k=0}^{\infty}a_k\phi_k $ then the series 
$$ \sum_{k=0}^{\infty}a_k\phi_k(x)\lvert \lambda_k\rvert^m$$
converges uniformly in $x\in B(x_0,R)$. And hence all series in the proof of the Theorem \ref{thm:existenzwave} converge uniformly. 
\end{lemma}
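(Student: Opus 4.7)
The plan is to combine three ingredients: rapid decay of the Fourier coefficients $a_k$ (coming from the smoothness of $f$), a polynomial-in-$\mu_k$ pointwise bound on the eigenfunctions $\phi_k$ (coming from elliptic regularity), and polynomial growth of the eigenvalue counting function (from Weyl's law on a bounded domain). These combine through the Weierstrass M-test to yield the desired uniform convergence. I will work in the setup of the proof of Theorem \ref{thm:existenzwave}, where $\supp(f)\subset B(x_0,R)$ sits strictly inside the domain $B(x_0,2R)$ of the Dirichlet Laplacian; this safety margin is what makes the integration by parts work without boundary contributions.

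First, since $f$ and all its derivatives vanish in a neighbourhood of $\partial B(x_0,2R)$, iterating Green's identity gives, for every $N\in\N$ and every $k$ with $\mu_k\neq 0$,
$$a_k = \int f\,\phi_k\,dx = (-\mu_k)^{-N}\int \Delta^N f\cdot\phi_k\,dx,$$
and Cauchy--Schwarz together with $\|\phi_k\|_{L^2}=1$ yields $|a_k|\leq \|\Delta^N f\|_{L^2}\,\mu_k^{-N}$. The finitely many indices with $\mu_k=0$ contribute only a finite sum and are harmless. Hence $a_k$ decays faster than any polynomial in $\mu_k^{-1}$.

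Next I would produce a bound of the form $\sup_{x\in B(x_0,R)}|\phi_k(x)|\leq C\mu_k^\alpha$ with $\alpha$ depending only on $n=\dim X$. From $\Delta^j\phi_k=(-\mu_k)^j\phi_k$ together with $\|\phi_k\|_{L^2}=1$, iterated interior elliptic regularity (applicable because the metric is analytic on $B(x_0,2R)$) gives $\|\phi_k\|_{H^{2j}(B(x_0,R))}\leq C(1+\mu_k)^j$, and the Sobolev embedding $H^{2j}\hookrightarrow C^0$ valid for $2j>n/2$ converts this into the desired $L^\infty$ bound. Weyl's law on the bounded domain $B(x_0,2R)$ gives $\mu_k\sim ck^{2/n}$, so $\sum_k\mu_k^{-\beta}$ converges for any $\beta>n/2$.

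Combining the three estimates, $|a_k\phi_k(x)\lambda_k^m|\leq C_N\,\mu_k^{\alpha+m/2-N}$ uniformly in $x\in B(x_0,R)$, and choosing $N$ large enough that $N-\alpha-m/2>n/2$ makes the majorising numerical series convergent; the Weierstrass M-test then delivers the claimed uniform convergence, and the same argument applied with $m$ replaced by $0$ handles the series $\sum a_k\phi_k$ itself, justifying termwise differentiation. The main obstacle I anticipate is the eigenfunction bound in the second step: producing an $L^\infty$ estimate with explicit polynomial dependence on $\mu_k$ for Dirichlet eigenfunctions is classical but somewhat external to the paper's machinery. The cleanest alternative is to invoke the on-diagonal Dirichlet heat kernel bound $p^D_t(x,x)\leq Ct^{-n/2}$ for small $t$ and optimise over $t$ in the identity $\phi_k=e^{\mu_k t}e^{-t\Delta_D}\phi_k$, which gives $\|\phi_k\|_\infty\leq C\mu_k^{n/4}$ directly and makes $\alpha$ explicit.
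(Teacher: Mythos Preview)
Your proposal is correct and follows essentially the same strategy as the paper: rapid decay of the Fourier coefficients from smoothness of $f$, a polynomial-in-$\mu_k$ sup bound on the eigenfunctions via Sobolev embedding, and Weyl's law to control the eigenvalue growth. The only cosmetic difference is in the packaging of the final step: the paper records $\sum_k |a_k|^2 \mu_k^{2j}<\infty$ and then applies Cauchy--Schwarz to the series $\sum |a_k|\,|\phi_k(x)|\,|\lambda_k|^m$, whereas you extract the pointwise bound $|a_k|\leq C_N\mu_k^{-N}$ first and invoke the Weierstrass M-test directly; both arguments are equivalent in substance. Your observation that one should work on the larger ball $B(x_0,2R)$ (so that $f$ vanishes near the boundary and the integrations by parts carry no boundary terms) matches exactly how the lemma is actually applied in the proof of Theorem~\ref{thm:existenzwave}.
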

\begin{proof}
First we observe that by the Sobolev embedding theorem (see for instance \cite[Chapter 3]{hebey1996sobolev}) 
there exists a constant $C_0>0$, such that for every function $u$ in the Sobolev space $H^2_{2n}(B(x_0,R))$ we have:
\begin{align}\label{eq:sobolev}
\lVert u\rVert_{\text{sup}}\leq C_o\big(\lVert u\rVert_{L^2(B(x_0,R))} +\lVert \Delta^n u\rVert_{L^2(B(x_0,R))}\big),
\end{align}
 where $\lVert \cdot\rVert_{\text{sup}}$ is the sup norm on $C^0(B(x_0,R))$ and $n=\operatorname{dim}X$. 
Now since $\phi_k$ is an orthonormal basis with respect to the $L^2$ norm on $B(x_0,R)$ we have 
\begin{align*}
\lvert \phi_k(x)\rvert \leq \lVert \phi_k\rVert_{\text{sup}}\overset{\text{(\ref{eq:sobolev})}}{\leq} C_0(1+\mu_k^n),\quad \forall x\in B(x_0,R).
\end{align*}
By Weyl's law (see for instance \cite[p.155]{chavel1984eigenvalues}) we obtain that $k\sim \mu_k^{n/2}$, meaning that for $k>0$ there is a constant $C\geq 1$ such that $\frac{1}{C}\leq \frac{\mu_k^{n/2}}{k}\leq C$. Therefore there is a $k_0\in \N$ such that for some $C_1>0$ 
\begin{align*}
C_1(1+\mu_k^n)\leq C_1k^2 \quad \forall k>k_0.
\end{align*}
This yields: 
\begin{align}\label{eq:unifom1}
\lvert \phi_k(x)\rvert \leq \lVert \phi_k\rVert_{\text{sup}}\leq C_1k^2 \quad \forall k>k_0.
\end{align}
Now observe that $f\in C^{\infty}_c(X)$ with support contained in $B(x_0,R)$ hence $\Delta^j f\in C^{\infty}_c(X)$ for every $j\in \N$  and has support in $B(x_0,R)$. Therefore:
\begin{align*}
\Delta^jf=\sum_{k=0}^{\infty}a_k\mu_k^j\phi_k
\end{align*}
converges uniformly on $B(x_0,R)$
and $\Delta^jf \in L^2(B(x_0,R))$.
This yields since $\{\phi_k\}_{k\in \N}$ is a orthonormal basis with respect to the $L^2$ norm
\begin{align*}
\infty> \lVert \Delta^jf\rVert^2_2=\sum_{k=0}^{\infty}\lvert a_k\rvert^2 \mu_k^{2j}.
\end{align*}
Now $\mu_k=(\lambda_k^2+\rho^2)$ hence:
\begin{align}\label{eq:unifom2}
\infty>\sum_{k=0}^{\infty}\lvert a_k\rvert^2(\lambda_k^2+\rho^2)^{2j}\geq \sum_{k=0}^{\infty}\lvert a_k\rvert^2(\lambda_k)^{4j} \quad\forall j\in \N.
\end{align}
With this we obtain for $l\in \N$ arbitrarily and any $x\in B(x_0,R)$:
\begin{align*}
\sum_{k=0}^{\infty}\lvert a_k\rvert \lvert \phi_k(x)\lvert \lambda_k\rvert^m\overset{\text{(\ref{eq:unifom1})}}&{\leq}C_1 \sum_{k=0}^{\infty}\lvert a_k\rvert k^2\lvert \lambda_k\rvert ^m\\
=&C_1\sum_{k=0}^{\infty}\lvert a_k\rvert k^2 \lvert \lambda_k\rvert^{m+l}\lvert \lambda_k\rvert^{-l}\\
\overset{\text{Cauchy Schwarz}}&{\leq}C_1\Big(\sum_{k=0}^{\infty}\lvert a_k\rvert^2k^2\lvert \lambda_k\rvert^{2m+2l}\Big)^{1/2}\\&\quad\quad\quad\cdot \Big(\sum_{k=0}^{\infty}\lvert \lambda_k\rvert^{-2l}\Big)^{1/2}.
\end{align*}
Now using Weyl's law and $\mu_k=\lambda^2_k+\rho^2$ we conclude: 
\begin{align*}
C_1&\Big(\sum_{k=0}^{\infty}\lvert a_k\rvert^2k^2\lvert \lambda\rvert^{2m+2l}\Big)^{1/2}\cdot \Big(\sum_{k=0}^{\infty}\lvert \lambda_k\rvert^{-2l}\Big)^{1/2}\\
&\leq C_1\Big(\sum_{k=0}^{\infty}\lvert a_k\rvert^2 \lvert\lambda_k\rvert^{2(m+l+2n)}\Big)^{1/2} \cdot \Big(\sum_{k=0}^{\infty}\lvert \lambda_k\rvert^{-2l}\Big)^{1/2}.
\end{align*}
Now with $l=n$ we have
\begin{align*}
\sum_{k=0}^{\infty}\lvert a_k\rvert^2\lvert \lambda_k\rvert^{2(m+4n)}\overset{\text{(\ref{eq:unifom2})}}{<}\infty
\end{align*}
and using Weyl's law there is a constant $C_2$ such that:
\begin{align*}
\sum_{k=0}^{\infty}\lvert \lambda_k\rvert^{-2n}\leq C_2\cdot\sum_{k=0}^{\infty}\frac{1}{k^2}<\infty.
\end{align*}
This yields the claim. 
\end{proof}

\begin{bem}\label{rmk:xistenz}
It also follows from the abstract theory of PDE,s that the solution of the shifted wave equation exist. See for instance \cite[Chapter 2+6]{TaylorPDE},  \cite[Chapter 5+6]{MR0460898}, \cite[Chapter 3]{MR2298021} and \cite{MR946226}. In their context one would consider the product manifold $\R\times X$ with the canonical space time structure where the shifted wave equation corresponds to a lower order perturbation  of the ordinary wave equation.
\end{bem}

\section{The rank one case}
A non compact simply connected harmonic manifold $X$ is said to be of purely exponential volume growth if there exists some constants $C\geq1$ and $\rho>0$ such that:
\begin{align*}
\frac{1}{C}\leq \frac{A(r)}{e^{2\rho r}}\leq C.
\end{align*}
This property is by \cite{knieper2009new} equivalent to
\begin{itemize}
\item The Geodesic Flow in $SX$ is  Anosov with respect to the Sasaki metric
\item Gromov Hyperbolicity 
\item Rank one.
\end{itemize}
Note that non positive curvature implies purely exponential volume growth.

From now on let $(X,g)$ to be a non compact simply connected harmonic manifold of rank one.
The geometric boundary $\partial X$ is defined by equivalence classes of geodesic rays. Where two rays are equivalent if their distance is bounded. 
The topology on $\partial X$ is the cone topology with the property that for $\overline{X}=X\cup\partial X$ and $B_1(x)=\{v\in T_xX\vert~\Vert v\Vert\leq 1\}$ the map $pr_x: B_1(x)\to \overline{X}$
\begin{align*}
pr_x (v)=\begin{cases}
\gamma_v(\infty)&\text{if}~\Vert v\Vert=1\\
\exp(\frac{1}{1-\Vert v \Vert}v)&\text{if}~\Vert v\Vert<1
\end{cases}
\end{align*}
is a homeomorphism.
It turns out that since the geodesic flow is Anosov the Busemann function only depends on the direction of the ray. Hence 
for $x\in X$ and $\xi\in\partial X$ being the point at infinity of the  geodesic $\gamma$ we can alternatively define the  Busemann function $B_{\xi,x}:X\to\R$ by $$B_{\xi,x}(y)=\lim_{t\to\infty}(d(y,\gamma(t))-d(x,\gamma(t)).$$

Furthermore we obtain a cocycle property:
\begin{align}\label{coBuse}
B_{\xi,x}=B_{\xi,\sigma}-B_{\xi,\sigma}(x).
\end{align}
By the above if $v\in S_{\sigma}X$ defines the unique geodesic ray such that $c_v(\infty)=\xi$ then
 $$b_v(x)=B_{\xi,\sigma}(x)\quad \forall x\in X.$$
 For a proof see \cite[Lemma 2.2]{Biswas2019}.
With this we have $\Delta B_{\xi,\sigma} =2\rho$ where $2\rho$ is the mean curvature of the horospheres. And obtain:
 $g(y)=e^{(i\lambda-\rho)B_{\xi, x}(y)}$ is a eigenfunction of the Laplacian with $g(x)=1$ and $\Delta g= -(\lambda^2+\rho^2)g$ for $\lambda\in \C$.
Furthermore, by pushing forward  the probability  measure induced by the metric $\theta_x$ on $S_x X$ under $pr_x$  we obtain a  probability measure $\mu_x$ on $\partial X$. Hence, we have a family of probability measures $\{\mu_x\}_{x\in X}$, that are pairwise absolutely continuous with Radon-Nikodym derivative  
\begin{align}\label{eq:RNdiv}
\frac{d\mu_x}{d\mu_y}(\xi)=e^{-2\rho B_{\xi,x}(y)}.
\end{align}
 For a detailed proof see \cite[Theorem 1.4]{Knieper2016}.

\subsection{Fourier Transform and Plancherel Theorem on Rank One Harmonic Manifolds}
The main tool in defining the Fourier transform on rank one harmonic manifolds is the theory of hypergroups. This was first presented for harmonic manifolds with pinched negative curvature in \cite{biswas2018fourier} and then extended in \cite{Biswas2019} to rank one harmonic manifold.
Since we refrain ourselves from details, we refer the reader 
 to \cite{bloom2011harmonic} for a thorough discussion of the topic and the definition. In \cite[Section 4.2]{Biswas2019} the authors showed that the density function $A(r)$ of a  harmonic manifold of rank one  satisfies the following conditions   
\begin{itemize}
\item[(C1)] $A$ is increasing and $A(r)\to\infty$ for $r\to \infty$.
\item[(C2)] $\frac{A^\prime}{A}$ is decreasing and $\rho=\frac{1}{2} \lim\limits_{r\to\infty} \frac{A^\prime(r)}{A(r)}>0$.
\item[(C3)] For $r>0$, $A(r)=r^{2\alpha +1 }B(r)$ for some $\alpha>-\frac{1}{2}$ and some even $C^{\infty}$ function $B(x)$ on $\R$ with $B(0)=1$.
\item[(C4)] 
\begin{align*}
G(r)=\frac{1}{4}\Bigl(\frac{A^\prime}{A}(r)\Bigr)^2 +\frac{1}{2}\Bigl(\frac{A^\prime}{A}(r)\Bigr)^\prime -\rho^2
\end{align*}\label{C42}
is bounded on $[r_0,\infty)$ for all $r_0>0$  and 
    \begin{align*}
\int_{r_1}^\infty r\vert G(r)\vert \,dr<\infty\quad\text{for some}~r_1 >0.
\end{align*}
\end{itemize}
And therefore $A(r)$ defines a   Ch\'ebli-Trim\'eche hypergoup. The structure is  of the so defined hypergroup is related to the second order differential operator given by the radial part of the Laplacian: 
\begin{align}\label{eq:A}
 L_{A}=\frac{d^2}{dr^2}+\frac{A^{\prime}(r)}{A(r)}\frac{d}{dr}.
\end{align}
Let
\begin{align}
\varphi_{\lambda}:\R^+\to\R, \quad\lambda\in [0,\infty)\cup [0,i\rho]
\end{align}
be the eigenfunction of $L_A$ with
\begin{align}\label{eq:eigenlaplace}
L_{A}\varphi_{\lambda}=-(\lambda^2+\rho^2)\varphi_\lambda
\end{align}
and which admits a smooth extension to zero with $\varphi_{\lambda}(0)=1$.
 Under  conditions (C1)-(C4) it was shown in \cite{Bloom1995TheHM}  that there is a complex function $\mathbf{c}$ on $\C\setminus \{0\}$. Such that 
for the two linear independent solutions of $$L_{A}u=-(\lambda^2+\rho^2)u$$ 
$\Phi_{\lambda}$ and $\Phi_{-\lambda}$ which are asymptotic to exponential functions i.e.
\begin{align}\label{eq:symeigen}
\Phi_{\pm \lambda}(r)= e^{(\pm i\lambda-\rho)r}(1+o(1))\text{ as }r\to\infty
\end{align}
we have
\begin{align}\label{eigende}
\varphi_{\lambda}=\mathbf{c}(\lambda)\Phi_{\lambda}+\mathbf{c}(-\lambda)\Phi_{-\lambda}\quad\forall \lambda\in \C\setminus \{0\}.
\end{align}
Imposing the additional condition that $\lvert \alpha \rvert>\frac{1}{2}$  the authors in \cite{Bloom1995TheHM}  showed that $\mathbf{c}$-function dose not have zeros on the closed lower half plane. Hence this would exclude the case $\operatorname{dim} X=3$ (see \cite{Biswas2019}) but the Lichnerowicz conjecture is affirmed in the case $\operatorname{dim} X<6$ and therefore the Jacobin analysis applies, and we can use the $\mathbf{c}$-function obtained in this context.
We then can define the radial Fourier transform by: 
\begin{defi}
Let $f:X\to\C$ be, i.e. $f=u\circ d_{\sigma}$ for some $\sigma \in X$, where $u:[0,\infty)\to\C$ and $d_{\sigma}:X\to \R$ is the distance function. The radial Fourier transform of $f$ is given by: 
$$\widehat{f}(\lambda):=\widehat{u}(\lambda)=\int_0^{\infty}u(r)\varphi_{\lambda}(r)A(r)\,dr.$$
\end{defi}
Note that in the following we will omit to mention the base point $\sigma$ unless there is the possibility of confusion. For $f$ radial around $\sigma\in X$, 
we will use  $\sigma$ as  base  point for the radial Fourier transform unless stated otherwise. 
Now observe that we obtain the radial eigenfunctions of the Laplace operator with eigenvalue $-(\lambda^2+\rho^2)$ by:
\begin{align}\label{defi:ef}
\varphi_{\lambda,\sigma}(y)=\varphi_{\lambda}\circ d(\sigma,y)\quad \forall x,y\in X.
\end{align}
Using the results from \cite{Bloom1995TheHM}  the authors in \cite{Biswas2019} showed that there is a constant $C_0$ such that 
for $f\in L^1(X)$ radial, i.e. $f=u\circ d_{\sigma}$ for some $\sigma\in X$ and $u:[0,\infty)\to\R$ such that $\widehat{u}\in L^1((0,\infty),C_0\lvert \mathbf{c}(\lambda)\rvert^{-2}\,d\lambda)$.
\begin{align}\label{fourad}
f(y)=C_0\int_{0}^\infty \widehat{f}(\lambda)\varphi_{\lambda,\sigma}(y)\vert \mathbf{c}(\lambda) \vert^{-2}\, d\lambda.
\end{align}
Moreover the radial Fourier transform extends to an isometry between the $L^2$-radial functions denoted by $L^2(X,\sigma)$ and $$L^2((0,\infty),C_0\lvert \mathbf{c}(\lambda)\rvert^{-2}\,d\lambda).$$
See  \cite[Theorem 4.7]{Biswas2019}.
 In the same fashion as in the case of the Helgason Fourier transform on symmetric spaces we can extend the Fourier transform to non radial functions. By using radial symmetry of the Poisson kernel. Again the main reference for this is \cite{Biswas2019}.
\begin{defi}\label{def:fourier}
		Let $\sigma\in X$ for $f:X\to \C$ measurable, the Fourier transform of $f$ based at $\sigma$ is given by 		
		$$\tilde{f}^{\sigma}(\lambda,\xi)=\int_{X}f(y)e^{(-i\lambda-\rho)B_{\xi,\sigma}(y)}\,dy$$
		for $\lambda\in\C$, $\xi\in\partial X$ for which the integral above converges.		
\end{defi}
		We can immediately note that because of the cocycle property of the Busemann function (\ref{coBuse})
	
		we obtain:
		\begin{lemma}
		Let $f\in C^{\infty}_c(X)$ and $x,\sigma\in X$ then we have:
		\begin{align}\label{eq:fourpoint}
			\tilde{f}^x (\lambda,\xi)=e^{(i\lambda+\rho)B_{\xi,\sigma}(x)}\tilde{f}^{\sigma}(\lambda,\xi).
			\end{align}
		\end{lemma}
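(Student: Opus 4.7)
The plan is to substitute the cocycle identity (\ref{coBuse}) directly into the exponent appearing in Definition \ref{def:fourier} and then to factor the resulting exponential. Concretely, I would start from
\[
\tilde{f}^x(\lambda,\xi) = \int_X f(y)\,e^{(-i\lambda-\rho)B_{\xi,x}(y)}\,dy,
\]
and apply $B_{\xi,x}(y) = B_{\xi,\sigma}(y) - B_{\xi,\sigma}(x)$ inside the exponent. The crucial observation is that $B_{\xi,\sigma}(x)$ depends only on $x$, $\sigma$ and $\xi$, not on the integration variable $y$, so it may be pulled out of the integral.

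The exponential then splits as
\[
e^{(-i\lambda-\rho)B_{\xi,x}(y)} = e^{(i\lambda+\rho)B_{\xi,\sigma}(x)}\cdot e^{(-i\lambda-\rho)B_{\xi,\sigma}(y)},
\]
and the constant (in $y$) prefactor comes out of the integral, leaving exactly $\tilde{f}^{\sigma}(\lambda,\xi)$. Since $f\in C^{\infty}_c(X)$, the integrand is compactly supported and all manipulations are absolutely justified; no interchange of limits or convergence argument is needed.

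There is no real obstacle here: the statement is essentially a direct corollary of the cocycle relation (\ref{coBuse}). The only point that requires care is bookkeeping of the sign when transferring the term $B_{\xi,\sigma}(x)$ from inside the exponent $e^{(-i\lambda-\rho)(\,\cdot\,)}$ to the outside, which flips the coefficient from $-i\lambda-\rho$ to $+i\lambda+\rho$ and produces the factor stated in the lemma.
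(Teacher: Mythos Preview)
Your proposal is correct and follows exactly the same route as the paper: substitute the cocycle identity (\ref{coBuse}) into the exponent in Definition~\ref{def:fourier}, factor the exponential, and pull the $y$-independent factor $e^{(i\lambda+\rho)B_{\xi,\sigma}(x)}$ outside the integral. There is nothing to add.
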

		\begin{proof}	
		Let $x,\sigma\in X$ and $f\in C^{\infty}_c(X)$ then we have for $\lambda\in \C$ and $\xi\in \partial X$ that:
		\begin{align*}
		\tilde{f}^x (\lambda,\xi)&=\int_Xf(y)e^{(-i\lambda-\rho)B_{\xi,x}(y)}\,dy\\
		&\overset{\text{(\ref{coBuse})}}{=}\int_Xf(y)e^{(-i\lambda-\rho)B_{\xi,\sigma}(y)}\cdot e^{(i\lambda+\rho)B_{\xi,\sigma}(x)}\,dy\\
		&=e^{(i\lambda+\rho)B_{\xi,\sigma}(x)}\int_Xf(y)e^{(-i\lambda-\rho)B_{\xi,\sigma}(y)}\,dy\\
		&=e^{(i\lambda+\rho)B_{\xi,\sigma}(x)}\tilde{f}^{\sigma}(\lambda,\xi).
		\end{align*}
		\end{proof}
		Furthermore the Fourier transform coincides with the radial Fourier transform on radial functions. For details see \cite[Lemma 5.2]{Biswas2019}. 	
	The inversion formula follows now from the representation of the radial eigenfunctions via convex combination of non radial eigenfunctions, \cite[Theorem 5.6]{Biswas2019},:
			\begin{align}\label{radialeigen}
			\varphi_{\lambda,\sigma}(y)=\int_{\partial X}e^{(i\lambda-\rho)B_{\xi,\sigma}(y)}\,d\mu_{\sigma} (\xi)\quad \forall\sigma\in X.
		\end{align}
This is analogous to the well known formula on a rank one symmetric space $G/K$ and harmonic $NA$ groups. 
See for the symmetric case \cite[Chapter III, Section 11]{helgason1994geometric} and for the harmonic $NA$ group \cite{Damek1992} and \cite{fourierNA2}.
		Using equation (\ref{radialeigen}) the authors obtain:
			\begin{align}
			f(x)=C_0 \int_{0}^{\infty}\int_{\partial X}\tilde{f}^{\sigma}(\lambda,\xi)e^{(i\lambda-\rho)B_{\xi,\sigma}(x)}\,d\mu_{\sigma}(\xi)
			\vert \mathbf{c}(\lambda)\vert^{-2}\,d\lambda,
			\end{align}
		where $C_0$ is the same constant given in (\ref{fourad}).
		Additionally the authors obtain a Plancherel theorem:

\begin{satz}[\cite{Biswas2019}]\label{Placherel Theorem}
		Let $\sigma\in X$ and $f,g\in C_{c}^{\infty}(X)$. Then we have:
			$$\int_X f(x)\overline{g(x)}\,dx=C_{0}\int_{0}^{\infty}\int_{\partial X}\tilde{f}^{\sigma}(\lambda,\xi)\overline{\tilde{g}
			^{\sigma}(\lambda,\xi)} \vert \mathbf{c}(\lambda)\vert^{-2}\,d\mu_{\sigma}(\xi)d\lambda 
			$$ 
		and the Fourier transform extends to an isometry between $$L^2(X)$$ and
		 $$L^2((0,\infty)\times \partial X,C_0 \vert \mathbf{c}(\lambda)\vert^{-2} \,d\mu_{\sigma}(\xi)\,d\lambda).$$
\end{satz}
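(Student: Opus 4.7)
The plan is to derive the Plancherel identity from the inversion formula recorded just above the statement, together with the elementary symmetry $\overline{e^{(i\lambda-\rho)B_{\xi,\sigma}(x)}}=e^{(-i\lambda-\rho)B_{\xi,\sigma}(x)}$ valid for real $\lambda$ (the Busemann function being real-valued). Fix $f,g\in C_c^\infty(X)$ and $\sigma\in X$. Substituting the inversion formula for $f$ into $\int_X f(x)\overline{g(x)}\,dx$ and interchanging the order of integration,
\begin{align*}
\int_X f\,\overline g\,dx
&=C_0\int_0^\infty\!\!\int_{\partial X}\tilde f^\sigma(\lambda,\xi)\Bigl(\int_X\overline{g(x)}\,e^{(i\lambda-\rho)B_{\xi,\sigma}(x)}\,dx\Bigr)\,d\mu_\sigma(\xi)\,\lvert\mathbf c(\lambda)\rvert^{-2}\,d\lambda,
\end{align*}
and by the symmetry above the inner $x$-integral equals $\overline{\tilde g^\sigma(\lambda,\xi)}$. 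This yields the stated bilinear identity on $C_c^\infty(X)\times C_c^\infty(X)$.

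The main technical step is the justification of Fubini above. For this I would exploit that $y\mapsto e^{(-i\lambda-\rho)B_{\xi,\sigma}(y)}$ is a smooth eigenfunction of $\Delta$ with eigenvalue $-(\lambda^2+\rho^2)$, which follows from Lemma \ref{lemma:Lapace} applied to $B_{\xi,\sigma}$ using $\lVert\nabla B_{\xi,\sigma}\rVert=1$ and $\Delta B_{\xi,\sigma}=2\rho$. A single application of Green's identity (the boundary term vanishes because $f$ is compactly supported) then gives, by iteration,
\[
\widetilde{(\Delta+\rho^2)^N f}^{\,\sigma}(\lambda,\xi)=(-1)^N\lambda^{2N}\tilde f^\sigma(\lambda,\xi)\qquad(N\in\N).
\]
Since $(\Delta+\rho^2)^N f\in C_c^\infty(X)$ and $\lvert B_{\xi,\sigma}(y)\rvert\le d(\sigma,y)$ is uniformly bounded on $\supp f$ independent of $\xi\in\partial X$, the trivial estimate $\lvert\tilde h^\sigma(\lambda,\xi)\rvert\le e^{\rho R}\lVert h\rVert_{L^1}$ (with $R$ chosen so that $\supp f\subset B(\sigma,R)$) yields $\lvert\tilde f^\sigma(\lambda,\xi)\rvert\le C_N(1+\lvert\lambda\rvert)^{-2N}$ uniformly in $\xi$. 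Since $\lvert\mathbf c(\lambda)\rvert^{-2}$ grows only polynomially at infinity by the standard Jacobi analysis, $\mu_\sigma$ is a probability measure, and $g\in L^1(X)$, the triple integral is absolutely convergent for $N$ large enough, which legitimises the interchange.

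The isometric extension of $f\mapsto\tilde f^\sigma$ to all of $L^2(X)$ follows from density of $C_c^\infty(X)$, and the image of the extended map is automatically closed. To see that it exhausts $L^2((0,\infty)\times\partial X,C_0\lvert\mathbf c\rvert^{-2}d\mu_\sigma\,d\lambda)$ one uses the inversion formula as a right inverse on a sufficiently rich subclass of the target (for instance compactly supported smooth elementary tensors in $(\lambda,\xi)$), which shows that the image is dense and hence everything. The principal obstacle throughout is controlling the singular weight $\lvert\mathbf c(\lambda)\rvert^{-2}$ near $\lambda=0$: here the condition $\lvert\alpha\rvert>\tfrac12$ recorded after (\ref{eigende}) ensures that $\mathbf c$ has no zeros in a neighbourhood of the origin on the real line, so $\lvert\mathbf c\rvert^{-2}$ is locally integrable at $0$, and combined with the rapid decay of $\tilde f^\sigma$ at infinity, the Fubini estimate closes.
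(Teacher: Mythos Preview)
The paper does not give its own proof of this statement: it is quoted verbatim from \cite{Biswas2019}, just as the inversion formula immediately preceding it is. So there is no ``paper's proof'' to match; your task was in effect to supply an argument the paper outsources.

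Your derivation of the bilinear identity is the standard one and is correct: once the inversion formula is granted for $f\in C_c^\infty(X)$, substituting it into $\int_X f\bar g$ and recognising the inner integral as $\overline{\tilde g^\sigma(\lambda,\xi)}$ via the conjugation symmetry is exactly how one passes from inversion to Plancherel. Your Fubini justification is also sound; the decay estimate $\lvert\tilde f^\sigma(\lambda,\xi)\rvert\le C_N(1+\lvert\lambda\rvert)^{-2N}$ uniformly in $\xi$ is precisely what the paper later proves more carefully as Theorem~\ref{thm:PW2}, and the polynomial growth of $\lvert\mathbf c(\lambda)\rvert^{-2}$ together with its local integrability near $0$ are part of the Ch\'ebli--Trim\`eche package invoked in the paper.

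The one place your argument is genuinely thin is surjectivity. Saying ``use the inversion formula as a right inverse on a sufficiently rich subclass'' hides real work: you must check that the inverse transform of a compactly supported smooth tensor in $(\lambda,\xi)$ actually lands in $L^2(X)$ (this needs control of $\varphi_{\lambda,\sigma}$ or of the Poisson kernel at spatial infinity, not just in $\lambda$), and that such tensors are dense in the weighted $L^2$ on the transform side. In \cite{Biswas2019} surjectivity is obtained by a different route, reducing to the radial Plancherel theorem (where the hypergroup machinery of Bloom--Xu gives a genuine unitary isomorphism) and then using the integral representation~(\ref{radialeigen}) to pass to the non-radial case. Your outline is not wrong in spirit, but as written it does not close; if you want a self-contained argument you should either fill in the $L^2$ estimate for the inverse transform or, more economically, invoke the radial isometry from the hypergroup theory as the paper's source does.
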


 \subsection{Wave Equation Under Fourier Transform and conservation of Energy}
 Using the Fourier transform we can obtain the conservation of energy for solutions of the wave equation similar to the result  in \cite{AMBP_2010__17_2_327_0} for Damek-Ricci spaces. 
 For this we first need to study the action of the Laplacian under Fourier transform. 
 \begin{lemma}\label{lemma:fourierdelta}
 Let $f\in L^2(X)$ such that $\Delta f \in L^2(X)$, where $\Delta f$ is meant in the sense of distributions i.e. $\Delta f$ is defined by 
 $$\int_X \Delta f(x)g(x)\,dx:=\int_Xf(x)\Delta g(x)\,dx\quad\forall g\in C^{\infty}_c(x),$$
  and $\sigma\in X$ then:
 \begin{align*}
 \widetilde{\Delta f}^{\sigma}(\lambda,\xi)=-(\lambda^2 +\rho^2) \widetilde{ f}^{\sigma}(\lambda,\xi)
 \end{align*} 
for almost every $(\lambda,\xi)\in (0,\infty)\times\partial X$.
 \end{lemma}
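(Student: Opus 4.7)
The plan is to reduce to compactly supported smooth $f$ by approximation, and to handle that base case by integration by parts against $y\mapsto e^{(-i\lambda-\rho)B_{\xi,\sigma}(y)}$, which is itself a smooth global eigenfunction of $\Delta$.

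First I would verify the eigenfunction property: picking $v\in S_\sigma X$ with $c_v(\infty)=\xi$ so that $B_{\xi,\sigma}=b_v$, I apply the horospherical Laplacian formula (\ref{eq:horolaplace}) to $u(t)=e^{(-i\lambda-\rho)t}$ and use $\Delta b_v=h=2\rho$. The coefficients collapse since
\begin{align*}
(-i\lambda-\rho)^2+2\rho(-i\lambda-\rho)=(-i\lambda-\rho)(-i\lambda+\rho)=-(\lambda^2+\rho^2),
\end{align*}
so $\Delta_y e^{(-i\lambda-\rho)B_{\xi,\sigma}(y)}=-(\lambda^2+\rho^2)\,e^{(-i\lambda-\rho)B_{\xi,\sigma}(y)}$. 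For $f\in C^\infty_c(X)$, Green's second identity applied on any geodesic ball containing $\operatorname{supp} f$ then gives $\widetilde{\Delta f}^{\sigma}(\lambda,\xi)=-(\lambda^2+\rho^2)\tilde f^{\sigma}(\lambda,\xi)$ pointwise, because $f$ vanishes on the boundary sphere and so the boundary terms drop out despite the non-integrable growth of the exponential at infinity.

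To pass from $C^\infty_c(X)$ to the distributional domain, I would take a sequence $f_n\in C^\infty_c(X)$ with $f_n\to f$ and $\Delta f_n\to\Delta f$ in $L^2(X)$; by the Plancherel isometry (Theorem \ref{Placherel Theorem}) the Fourier transforms $\tilde f_n^{\sigma}$ and $\widetilde{\Delta f_n}^{\sigma}$ converge in $L^2((0,\infty)\times\partial X,\,C_0|\mathbf{c}(\lambda)|^{-2}d\mu_{\sigma}(\xi)\,d\lambda)$ to $\tilde f^{\sigma}$ and $\widetilde{\Delta f}^{\sigma}$, respectively; extracting an almost everywhere convergent subsequence of the identity $\widetilde{\Delta f_n}^{\sigma}=-(\lambda^2+\rho^2)\tilde f_n^{\sigma}$ yields the conclusion.

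The one nontrivial point is the existence of such an approximating sequence, i.e.\ the density of $C^\infty_c(X)$ in the graph norm of the distributional Laplacian on $L^2(X)$. This is equivalent to essential self-adjointness of $\Delta$ on $C^\infty_c(X)$, a classical fact on any complete Riemannian manifold due to Gaffney and Roelcke, which applies directly since $X$ is complete. If one prefers to avoid citing this, an alternative is to define the $L^2$-Laplacian intrinsically as the unbounded multiplication operator by $-(\lambda^2+\rho^2)$ on the Plancherel side, check that it agrees with the distributional Laplacian on $C^\infty_c(X)$ using the base case above, and then both self-adjoint extensions must coincide, giving the lemma by construction.
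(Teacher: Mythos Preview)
Your proof is correct and follows essentially the same route as the paper: establish the identity for $f\in C^\infty_c(X)$ by moving $\Delta$ onto the eigenfunction $e^{(-i\lambda-\rho)B_{\xi,\sigma}}$, then pass to the general case by graph-norm approximation, Plancherel, and extraction of an a.e.\ convergent subsequence. The paper cites Strichartz for the approximation step where you invoke Gaffney--Roelcke essential self-adjointness, but these are equivalent inputs.
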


 \begin{proof}
 Let $\sigma\in X$. 
 Since $C^{\infty}_c(X)$ is dense in $L^2(X)$ and by using the Plancherel theorem it is sufficient to prove the assertion for $f\in C^{\infty}_c(X)$. To be more precise: If $f,\Delta f\in L^2(X)$ then there is a sequence $f_n\in C^{\infty}_c(X)$ such that $f_n\to f$ and $\Delta f_n\to \Delta f$ in $L^2(X)$. For this see \cite[Corollary 2.5]{STRICHARTZ198348}. 
Let $\sigma\in X$ then the above implies by the the Plancherel theorem that $\tilde{f_n}^{\sigma}\to\tilde{f}^{\sigma}$  and $\widetilde{\Delta f_n}^{\sigma}\to \widetilde{\Delta f}^{\sigma}$ in $L^2((0,\infty)\times \partial X,C_0 \vert \mathbf{c}(\lambda)\vert^{-2} \,d\mu_{\sigma}(\xi)\,d\lambda)$. Therefore we find subsequences such that both converge point wise almost everywhere. 

  Then since the Laplacian is essentially self adjoint and 
 \begin{align*}
 \Delta e^{(-i\lambda-\rho)B_{\xi, \sigma}(y)}=-(\lambda^2 +\rho^2) e^{(-i\lambda-\rho)B_{\xi, \sigma}(y)}\quad \forall y\in X
 \end{align*}
  we have almost every where:
 \begin{align*}
  \widetilde{\Delta f_n}^{\sigma}(\lambda,\xi)&=\int_X \Delta f_n(x)e^{(-i\lambda-\rho)B_{\xi, \sigma}(x)}\,dx\\
  &=\int_X  f_n(x)\Delta e^{(-i\lambda-\rho)B_{\xi, \sigma}(x)}\,dx\\
  &=-(\lambda^2+\rho^2) \int_X  f_n(x)e^{(-i\lambda-\rho)B_{\xi, \sigma}(x)}\,dx\\
  &=-(\lambda^2+\rho^2)  \tilde{f_n}^{\sigma}(\lambda,\xi).
 \end{align*}
 Therefore we have after if necessary passing to a subsequences that
\begin{align*} 
 -(\lambda^2 +\rho^2) \widetilde{ f}^{\sigma}(\lambda,\xi)&=\lim_{n\to\infty} -(\lambda^2 +\rho^2) \widetilde{ f_n}^{\sigma}(\lambda,\xi) \\
& =\lim_{n\to\infty}  \widetilde{\Delta f_n}^{\sigma}(\lambda,\xi)\\
& =\widetilde{\Delta f}^{\sigma}(\lambda,\xi)\\
 \end{align*}
 almost everywhere.
 \end{proof}

\begin{satz}\label{thm:wavefour}
Suppose $(X,g)$ is a harmonic manifold of rank one. Let $\sigma\in X$ then the Fourier transform of a $C^{\infty}$ solution to the shifted wave equation $\varphi:X\times \R\to\C$  with initial conditions
\begin{align*}
\varphi(x,0)&=f(x)\in C^{\infty}_c(X),\\
 \left.\frac{\partial}{\partial t}\right\vert_{t=0} \varphi(x,t)&=g(x)\in C^{\infty}_c(X)
\end{align*}
is given by
\begin{align*}
\varphi(x,t)=C_0 \int_{0}^{\infty}\int_{\partial X}\big(\tilde{f}^{\sigma}(\lambda,\xi)\cos(\lambda t)+\tilde{g}^{\sigma}(\lambda,\xi)\frac{\sin(\lambda t)}{\lambda}\big)\\
\cdot e^{(i\lambda-\rho)B_{\xi,\sigma}(x)}\,d\mu_{\sigma}(\xi)\lvert \mathbf{c}(\lambda)\rvert^{-2}\,d\lambda.
\end{align*}
\end{satz}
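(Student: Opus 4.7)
The plan is to apply the Fourier transform based at $\sigma$ in the spatial variable to the shifted wave equation, thereby converting it to a one-parameter family (indexed by $(\lambda,\xi)$) of simple second order ODEs in $t$, solve them explicitly, and then recover $\varphi$ via the Fourier inversion formula.

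First I would observe that by Corollary \ref{folg:support2} the solution $\varphi(\cdot,t)$ has compact support in $X$ for every fixed $t$, so the integrals defining $\tilde{\varphi}^{\sigma}(\lambda,\xi,t)$ and $\widetilde{\Delta\varphi}^{\sigma}(\lambda,\xi,t)$ are absolutely convergent, and in fact $\varphi(\cdot,t)\in C^{\infty}_c(X)$. Since $\varphi$ is $C^{\infty}$ jointly in $(x,t)$ with the support of $\varphi(\cdot,t)$ varying in a controlled way with $t$, we may differentiate under the integral sign, so that
\begin{align*}
\frac{\partial^2}{\partial t^2}\tilde{\varphi}^{\sigma}(\lambda,\xi,t)
=\widetilde{\left(\frac{\partial^2\varphi}{\partial t^2}\right)}^{\sigma}\!(\lambda,\xi,t).
\end{align*}
Combining this with the wave equation $\partial_t^2\varphi=(\Delta+\rho^2)\varphi$ and Lemma \ref{lemma:fourierdelta} applied to $\varphi(\cdot,t)\in C^{\infty}_c(X)$ gives
\begin{align*}
\frac{\partial^2}{\partial t^2}\tilde{\varphi}^{\sigma}(\lambda,\xi,t)
=\bigl(-(\lambda^2+\rho^2)+\rho^2\bigr)\tilde{\varphi}^{\sigma}(\lambda,\xi,t)
=-\lambda^2\,\tilde{\varphi}^{\sigma}(\lambda,\xi,t).
\end{align*}

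For each fixed $(\lambda,\xi)$ this is the elementary ODE $y''=-\lambda^2 y$, whose general solution is $y(t)=\alpha\cos(\lambda t)+\beta\tfrac{\sin(\lambda t)}{\lambda}$ (interpreted as $\alpha+\beta t$ when $\lambda=0$). The initial data of $\varphi$ translate directly via the Fourier transform to $\tilde{\varphi}^{\sigma}(\lambda,\xi,0)=\tilde{f}^{\sigma}(\lambda,\xi)$ and $\partial_t\tilde{\varphi}^{\sigma}(\lambda,\xi,0)=\tilde{g}^{\sigma}(\lambda,\xi)$, which fixes $\alpha=\tilde{f}^{\sigma}(\lambda,\xi)$ and $\beta=\tilde{g}^{\sigma}(\lambda,\xi)$. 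Hence
\begin{align*}
\tilde{\varphi}^{\sigma}(\lambda,\xi,t)
=\tilde{f}^{\sigma}(\lambda,\xi)\cos(\lambda t)+\tilde{g}^{\sigma}(\lambda,\xi)\frac{\sin(\lambda t)}{\lambda}.
\end{align*}

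Finally, since $\varphi(\cdot,t)\in C^{\infty}_c(X)\subset L^2(X)$, the Fourier inversion/Plancherel statement (Theorem \ref{Placherel Theorem} together with the inversion formula preceding it) applies and yields the claimed representation after substituting the expression above. The main technical point I expect to need care with is the justification that $\partial_t^2$ commutes with the $x$-Fourier transform; this is where the finite propagation speed from Corollary \ref{folg:support2} (so that the $x$-integration is effectively over a compact set depending continuously on $t$) and the $C^{\infty}$ regularity of $\varphi$ do the work, since otherwise one would need a dominated convergence argument in the non-compact setting where the eigenfunctions $e^{(-i\lambda-\rho)B_{\xi,\sigma}(\cdot)}$ are not integrable against arbitrary $L^2$ data.
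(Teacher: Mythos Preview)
Your proof is correct and follows essentially the same route as the paper: establish compact support of $\varphi(\cdot,t)$ via finite propagation speed, pass the time derivatives through the spatial Fourier transform, use Lemma \ref{lemma:fourierdelta} to reduce to the ODE $y''=-\lambda^2 y$, solve with the transformed initial data, and invert. The only cosmetic difference is that the paper cites Remark \ref{folg:support} rather than Corollary \ref{folg:support2} for the compact support, and is terser about the justification of differentiating under the integral sign.
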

\begin{proof}
Since by Remark \ref{folg:support}    $\varphi(\cdot,t)$  and all its derivatives in $t$ have compact support for every $t\in \R$
we obtain:
\begin{align*}
\frac{\partial^2}{\partial t^2}\widetilde{\varphi}^{\sigma}((\lambda,\xi);t)&=\frac{\partial^2}{\partial t^2}\int_X \varphi(x)e^{(-i\lambda-\rho)B_{\xi,\sigma}(x)}\,dx\\
&=\int_X \frac{\partial^2}{\partial t^2} \varphi(x)e^{(-i\lambda-\rho)B_{\xi,\sigma}(x)}\,dx\\
&=\widetilde{\frac{\partial^2}{\partial t^2}\varphi}^{\sigma}((\lambda,\xi);t)\\
&=\widetilde{\Delta\varphi}^{\sigma}((\lambda,\xi);t)+\rho^2\widetilde{\varphi}^{\sigma}((\lambda,\xi);t)\\
\overset{\text{Lemma \ref{lemma:fourierdelta} }}&{=}-(\lambda^2-\rho^2)\widetilde{\varphi}^{\sigma}((\lambda,\xi);t)+\rho^2\widetilde{\varphi}^{\sigma}((\lambda,\xi);t)\\
&=-\lambda^2\widetilde{\varphi}^{\sigma}((\lambda,\xi);t).
\end{align*}
Now the wave equation becomes:
\begin{align*}
\frac{\partial^2}{\partial t^2}\widetilde{\varphi}^{\sigma}((\lambda,\xi);t)&=-\lambda^2 \widetilde{\varphi}^{\sigma}((\lambda,\xi);t)\\
\widetilde{\varphi}^{\sigma}((\lambda,\xi);0)&=\tilde{f}^{\sigma}(\lambda,\xi)\\
\frac{\partial}{\partial t}\widetilde{\varphi}^{\sigma}((\lambda,\xi);0)&=\tilde{g}^{\sigma}(\lambda,\xi)
\end{align*}
hence
$$\widetilde{\varphi}^{\sigma}((\lambda,\xi);t)=\tilde{f}^{\sigma}(\lambda,\xi)\cos(\lambda t)+\tilde{g}^{\sigma}(\lambda,\xi)\frac{\sin(\lambda t)}{\lambda},$$
therefore applying the inverse Fourier transform yields the claim. 
\end{proof}
\begin{bem}
While the representation of the solutions of the shifted wave equation from Theorem \ref{thm:wave1} corresponds to the classical representation of the solutions of the wave equation on $\R^n$ by \'{A}sgeirsson \cite{MR1513094} the representation obtained in Theorem \ref{thm:wavefour} corresponds to the operator expression for the operator $\Delta_{\rho}:=\Delta+\rho^2$:
$$\varphi(x,t)=\cos(\sqrt{-\Delta_{\rho}}t)f(x)+\frac{\sin(\sqrt{-\Delta_{\rho}}t)}{\sqrt{-\Delta_{\rho}}}g(x).$$
In turn this again corresponds to the expression of the solution as a power series in the proof Theorem \ref{thm:existenzwave}.
\end{bem}
\begin{defi}
Let  $\varphi:X\times \R\to\C$  be a solution of the shifted wave equation, we define its kinetic energy $\mathcal{K}(\varphi)$ by: 
$$\mathcal{K}(\varphi)(t):=\frac{1}{2}\int_X \left\lvert \frac{\partial}{\partial t } \varphi(x,t)\right \rvert^2 \,dx$$
and its potential energy $\mathcal{P}(\varphi)(t)$ by
$$ \mathcal{P}(\varphi)(t):=\frac{1}{2} \int_X \varphi(x,t)(-\Delta -\rho^2)\overline{\varphi}(x,t)\,dx.$$
The total energy is defined by  $$\mathcal{E}(\varphi)(t):= \mathcal{K}(\varphi)(t)+\mathcal{P}(\varphi)(t).$$
\end{defi}
\begin{lemma}\label{lemma:energy}
Suppose $(X,g)$ is a harmonic manifold of rank one. Let $\sigma\in X$ and $\varphi:X\times \R\to\C$  be a solution to the shifted wave equation  with initial conditions 
\begin{align*}
\varphi(x,0)&=f(x)\in C^{\infty}_c(X)\\
\left. \frac{\partial}{\partial t}\right\vert_{t=0} \varphi(x,t)&=g(x)\in C^{\infty}_c(X)
\end{align*}
 then we have 
\begin{align}
2\mathcal{K}(\varphi)(t)=&C_0\int_0^{\infty}\int_{\partial X}\lvert -\lambda \widetilde{f}^{\sigma}(\lambda,\xi)\sin(\lambda t)\label{eq:energi1}\\
&+\widetilde{g}^{\sigma}(\lambda,\xi)\cos(\lambda t)\rvert^2 \,d\mu_{\sigma}(\xi)\lvert \mathbf{c}(\lambda)\rvert^{-2}\,d\lambda\nonumber
\end{align}
and 
\begin{align}
2\mathcal{P}(\varphi)(t)=&C_0\int_0^{\infty}\int_{\partial X}\lvert \lambda \widetilde{f}^{\sigma}(\lambda,\xi)\cos(\lambda t)\label{eq:energi2}\\
&+\widetilde{g}^{\sigma}(\lambda,\xi)\sin(\lambda t)\rvert^2 \,d\mu_{\sigma}(\xi)\lvert \mathbf{c}(\lambda)\rvert^{-2}\,d\lambda.\nonumber
\end{align}
\end{lemma}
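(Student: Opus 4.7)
The plan is to apply the Plancherel theorem (Theorem \ref{Placherel Theorem}) directly to both energy integrals, using the explicit Fourier representation of $\widetilde{\varphi}^{\sigma}((\lambda,\xi);t)$ obtained in Theorem \ref{thm:wavefour} together with the action of $\Delta$ under the Fourier transform from Lemma \ref{lemma:fourierdelta}.

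For the kinetic energy, observe that by finite speed of propagation (Corollary \ref{folg:support2}) $\varphi(\cdot,t)$ has compact support for each fixed $t$, so differentiation and integration in the definition of $\widetilde{\varphi}^\sigma$ may be exchanged. From Theorem \ref{thm:wavefour} one obtains
\begin{align*}
\widetilde{\tfrac{\partial}{\partial t}\varphi}^{\sigma}((\lambda,\xi);t)
= \tfrac{\partial}{\partial t}\widetilde{\varphi}^{\sigma}((\lambda,\xi);t)
= -\lambda\,\widetilde{f}^{\sigma}(\lambda,\xi)\sin(\lambda t) + \widetilde{g}^{\sigma}(\lambda,\xi)\cos(\lambda t).
\end{align*}
Since $\tfrac{\partial}{\partial t}\varphi(\cdot,t)\in C^{\infty}_c(X)\subset L^2(X)$, applying Plancherel to $\tfrac{\partial}{\partial t}\varphi$ gives (\ref{eq:energi1}) at once.

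For the potential energy, the key point is that the Laplacian is a real differential operator, so $\overline{(-\Delta-\rho^2)\overline{\varphi}} = (-\Delta-\rho^2)\varphi$. The polarized Plancherel identity $\int_X f\overline{g}\,dx = C_0\int\!\!\int \widetilde{f}^{\sigma}\,\overline{\widetilde{g}^{\sigma}}\,d\mu_\sigma\,|\mathbf{c}|^{-2}\,d\lambda$ applied with $f = \varphi(\cdot,t)$ and $g = (-\Delta-\rho^2)\varphi(\cdot,t)$, combined with Lemma \ref{lemma:fourierdelta} which yields $\widetilde{(-\Delta-\rho^2)\varphi}^{\sigma} = \lambda^2\,\widetilde{\varphi}^{\sigma}$, gives
\begin{align*}
\int_X \varphi(x,t)\,(-\Delta-\rho^2)\overline{\varphi}(x,t)\,dx
= C_0\int_0^{\infty}\!\!\int_{\partial X} \lambda^2\,\bigl|\widetilde{\varphi}^{\sigma}((\lambda,\xi);t)\bigr|^2\,d\mu_{\sigma}(\xi)\,|\mathbf{c}(\lambda)|^{-2}\,d\lambda.
\end{align*}
Substituting $\widetilde{\varphi}^{\sigma}((\lambda,\xi);t) = \widetilde{f}^{\sigma}\cos(\lambda t) + \widetilde{g}^{\sigma}\tfrac{\sin(\lambda t)}{\lambda}$ from Theorem \ref{thm:wavefour} and noting that $\lambda\cdot\tfrac{\sin(\lambda t)}{\lambda}=\sin(\lambda t)$ produces (\ref{eq:energi2}).

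There is no real obstacle here: the only subtleties are the legitimacy of differentiating under the integral defining the Fourier transform and the bookkeeping of complex conjugates in the Plancherel identity. Both are handled cleanly by finite propagation speed, which guarantees that $\varphi(\cdot,t)$, $\tfrac{\partial}{\partial t}\varphi(\cdot,t)$ and $\Delta\varphi(\cdot,t)$ all lie in $C^{\infty}_c(X)$, so Theorem \ref{Placherel Theorem} and Lemma \ref{lemma:fourierdelta} apply without any additional approximation argument.
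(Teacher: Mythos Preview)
Your proof is correct and follows essentially the same route as the paper's own argument: apply the Plancherel theorem to $\tfrac{\partial}{\partial t}\varphi(\cdot,t)$ for the kinetic energy and to the pair $\varphi(\cdot,t)$, $(-\Delta-\rho^2)\varphi(\cdot,t)$ for the potential energy, invoking Lemma~\ref{lemma:fourierdelta} and Theorem~\ref{thm:wavefour} throughout. Your explicit remark that $\overline{(-\Delta-\rho^2)\overline{\varphi}}=(-\Delta-\rho^2)\varphi$ is exactly the bookkeeping step the paper performs implicitly when it writes $(-\Delta-\rho^2)\overline{\varphi}$ under Plancherel as $-\overline{\widetilde{\Delta\varphi}^{\sigma}}-\overline{\widetilde{\rho^2\varphi}^{\sigma}}$.
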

\begin{proof}
Using the Plancherel theorem for the Fourier transform and Theorem \ref{thm:wavefour}  we obtain for the kinetic energy
\begin{align*}
2\mathcal{K}(\varphi)(t)&=\int_X \left\lvert \frac{\partial}{\partial t } \varphi(x,t)\right \rvert^2 \,dx\\
\overset{\text{Plancherel theorem}}&{=}C_0\int_0^{\infty}\int_{\partial X}\left\lvert \frac{\partial}{\partial t} \widetilde{\varphi}^{\sigma}(\lambda,\xi;t)\right\rvert^2 \,d\mu_{\sigma}(\xi)\lvert \mathbf{c}(\lambda)\rvert^{-2}\,d\lambda\\
\overset{\text{Theorem \ref{thm:wavefour} }}&{=}C_0\int_0^{\infty}\int_{\partial X}\lvert -\lambda \widetilde{f}^{\sigma}(\lambda,\xi)\sin(\lambda t)\\
&+\tilde{g}^{\sigma}(\lambda,\xi)\cos(\lambda t) \rvert^2 \,d\mu_{\sigma}(\xi)\lvert \mathbf{c}(\lambda)\rvert^{-2}\,d\lambda.
\end{align*}
For the potential energy we are using the Plancherel theorem for the Fourier transform, Theorem \ref{thm:wavefour} and Lemma \ref{lemma:fourierdelta}:

\begin{align*}
2\mathcal{P}(\varphi)(t)=&\int_X \varphi(x,t)(-\Delta -\rho^2)\overline{\varphi}(x,t)\,dx\\
\overset{\text{Plancherel theorem}}&{=}C_0\int_0^{\infty}\int_{\partial X}  \widetilde{\varphi}^{\sigma}(\lambda,\xi;t)\\
&\cdot\big(-\overline{\widetilde{\Delta \varphi}^{\sigma}}(\lambda,\xi;t)-\overline{\widetilde{\rho^2 \varphi}^{\sigma}}(\lambda,\xi;t)\big)  \,d\mu_{\sigma}(\xi)\lvert  \mathbf{c}(\lambda)\rvert^{-2}\,d\lambda\\
\overset{\text{Lemma \ref{lemma:fourierdelta}}}&{=}C_0\int_0^{\infty}\int_{\partial X}  \widetilde{\varphi}^{\sigma}(\lambda,\xi;t)\\
&\cdot\big((\lambda^2+\rho^2)\overline{\widetilde{ \varphi}^{\sigma}}(\lambda,\xi;t)-\overline{\widetilde{\rho^2 \varphi}^{\sigma}}(\lambda,\xi;t)\big) \,d\mu_{\sigma}(\xi) \lvert   \mathbf{c}(\lambda)\rvert^{-2}\,d\lambda\\
\overset{\text{Theorem \ref{thm:wavefour}}}&{=}C_0\int_0^{\infty}\int_{\partial X}\lvert \lambda \widetilde{f}^{\sigma}(\lambda,\xi)\cos(\lambda t)\\
&+\widetilde{g}^{\sigma}(\lambda,\xi)\sin(\lambda t)\rvert^2 \,d\mu_{\sigma}(\xi)\lvert \mathbf{c}(\lambda)\rvert^{-2}\,d\lambda.
\end{align*}
\end{proof}
\begin{satz}\label{thm:energy}
Suppose $(X,g)$ is a harmonic manifold of rank one. Let $\sigma\in X$ and $\varphi:X\times \R\to\C$  a solution to the shifted wave equation  with initial conditions $f,g\in C^{\infty}_c(X)$ then the total energy $\mathcal{E}(\varphi)(t)$ is independent of $t$. In particular
\begin{align*}
2\mathcal{E}(\varphi)(t)=&\lVert \lambda \tilde{f}^{\sigma}\rVert^2_{L^2((0,\infty)\times \partial X,C_0 \vert \mathbf{c}(\lambda)\vert^{-2} \,d\mu_{\sigma}(\xi)\,d\lambda)}\\
&+\lVert \tilde{g}^{\sigma}\rVert^2_{L^2((0,\infty)\times \partial X,C_0 \vert \mathbf{c}(\lambda)\vert^{-2} \,d\mu_{\sigma}(\xi)\,d\lambda)}.
\end{align*}
\end{satz}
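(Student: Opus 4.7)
The plan is to exploit the two explicit formulas for $\mathcal{K}(\varphi)(t)$ and $\mathcal{P}(\varphi)(t)$ provided by Lemma \ref{lemma:energy}: once these are in hand, conservation of total energy reduces to a pointwise trigonometric identity under the Plancherel integral.

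First I would add the integrands from (\ref{eq:energi1}) and (\ref{eq:energi2}) and compute the pointwise sum
\begin{align*}
S(\lambda,\xi;t) :=& \left\lvert -\lambda \widetilde{f}^{\sigma}(\lambda,\xi)\sin(\lambda t)+\widetilde{g}^{\sigma}(\lambda,\xi)\cos(\lambda t)\right\rvert^{2} \\
&+\left\lvert \lambda \widetilde{f}^{\sigma}(\lambda,\xi)\cos(\lambda t)+\widetilde{g}^{\sigma}(\lambda,\xi)\sin(\lambda t)\right\rvert^{2}.
\end{align*}
Expanding both squared moduli, the four cross terms of the form $\pm\lambda \widetilde{f}^{\sigma}\overline{\widetilde{g}^{\sigma}}\sin(\lambda t)\cos(\lambda t)$ and their complex conjugates occur with opposite signs in the two summands and cancel; the surviving diagonal contributions combine via $\sin^{2}(\lambda t)+\cos^{2}(\lambda t)=1$ to give
\begin{align*}
S(\lambda,\xi;t)=\lambda^{2}\lvert \widetilde{f}^{\sigma}(\lambda,\xi)\rvert^{2}+\lvert \widetilde{g}^{\sigma}(\lambda,\xi)\rvert^{2},
\end{align*}
which is manifestly independent of $t$.

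Next I would integrate this identity against the Plancherel measure $C_{0}\lvert \mathbf{c}(\lambda)\rvert^{-2}\,d\mu_{\sigma}(\xi)\,d\lambda$ on $(0,\infty)\times\partial X$. Adding (\ref{eq:energi1}) and (\ref{eq:energi2}) then yields
\begin{align*}
2\mathcal{E}(\varphi)(t) = C_{0}\int_{0}^{\infty}\int_{\partial X}\Bigl(\lambda^{2}\lvert \widetilde{f}^{\sigma}(\lambda,\xi)\rvert^{2}+\lvert \widetilde{g}^{\sigma}(\lambda,\xi)\rvert^{2}\Bigr)\,d\mu_{\sigma}(\xi)\lvert \mathbf{c}(\lambda)\rvert^{-2}\,d\lambda,
\end{align*}
and identifying the two summands on the right as squared norms in $L^{2}((0,\infty)\times\partial X,C_{0}\lvert \mathbf{c}(\lambda)\rvert^{-2}\,d\mu_{\sigma}(\xi)\,d\lambda)$ gives the claimed formula. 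Since $f,g\in C^{\infty}_{c}(X)$, the Plancherel theorem (Theorem \ref{Placherel Theorem}) guarantees that $\widetilde{f}^{\sigma},\widetilde{g}^{\sigma}$ lie in this $L^{2}$ space, so the integrals are finite; to handle the factor $\lambda^{2}$ one notes that $\lambda^{2}\widetilde{f}^{\sigma}$ is, up to sign and the additive constant $\rho^{2}$, the Fourier transform of $(-\Delta-\rho^{2})f\in C^{\infty}_{c}(X)$ by Lemma \ref{lemma:fourierdelta}, so it too lies in the Plancherel $L^{2}$ space.

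There is no genuine obstacle here beyond bookkeeping: once Lemma \ref{lemma:energy} is established the argument is a one-line trigonometric identity followed by integration. The only conceptual subtlety worth highlighting explicitly is the cancellation of the $\sin(\lambda t)\cos(\lambda t)$ cross terms, which is precisely the reason the particular combination $\mathcal{K}+\mathcal{P}$ (rather than any other linear combination) produces a $t$-independent quantity; this in turn reflects the underlying fact that the operator $-\Delta-\rho^{2}$ is nonnegative, so that $\sqrt{-\Delta-\rho^{2}}$ is well defined and the solution operator $\cos(t\sqrt{-\Delta-\rho^{2}})$ from the remark following Theorem \ref{thm:wavefour} is unitary.
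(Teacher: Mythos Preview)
Your proposal is correct and follows essentially the same route as the paper: expand the two integrands from Lemma~\ref{lemma:energy}, observe the cross terms cancel, apply $\sin^{2}+\cos^{2}=1$, and integrate. Your additional remarks on finiteness of the norms and the unitarity interpretation are welcome but not needed for the argument itself.
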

\begin{proof}
If we look at the terms under the integrals in Lemma \ref{lemma:energy} separately we obtain:
\begin{align*}
(\ref{eq:energi1})=&\lvert -\lambda \widetilde{f}^{\sigma}(\lambda,\xi)\sin(\lambda t)+\widetilde{g}^{\sigma}(\lambda,\xi)\cos(\lambda t)\rvert^2\\
=& \lambda^2 \lvert\widetilde{f}^{\sigma}(\lambda,\xi)\rvert^2\sin^2(\lambda t)+\lvert\widetilde{g}^{\sigma}(\lambda,\xi)\rvert^2\cos^2(\lambda t)\\
&-\lambda \widetilde{f}^{\sigma}(\lambda,\xi)\sin(\lambda t)\cdot\overline{\widetilde{g}^{\sigma}}(\lambda,\xi)\cos(\lambda t)\\
&-\lambda \overline{\widetilde{f}^{\sigma}}(\lambda,\xi)\sin(\lambda t)\cdot\widetilde{g}^{\sigma}(\lambda,\xi)\cos(\lambda t).
\end{align*}
and
\begin{align*}
(\ref{eq:energi2})=&\lvert \lambda \widetilde{f}^{\sigma}(\lambda,\xi)\cos(\lambda t)+\widetilde{g}^{\sigma}(\lambda,\xi)\sin(\lambda t)\rvert^2\\
=&\lambda^2 \lvert \widetilde{f}^{\sigma}(\lambda,\xi)\rvert^2 \cos^2(\lambda t)+\lvert \widetilde{g}^{\sigma}(\lambda,\xi)\rvert^2\sin^2(\lambda t)\\
&+\lambda \widetilde{f}^{\sigma}(\lambda,\xi)\cos(\lambda t)\cdot\overline{\widetilde{g}^{\sigma}}(\lambda,\xi)\sin(\lambda t)\\
&+\lambda \overline{\widetilde{f}^{\sigma}}(\lambda,\xi)\cos(\lambda t)\cdot\widetilde{g}^{\sigma}(\lambda,\xi)\sin(\lambda t).
\end{align*}
Hence we obtain:
\begin{align*}
(\ref{eq:energi1})+(\ref{eq:energi2})=&\lambda^2 \lvert\widetilde{f}^{\sigma}(\lambda,\xi)\rvert^2\sin^2(\lambda t)+\lvert\widetilde{g}^{\sigma}(\lambda,\xi)\rvert^2\cos^2(\lambda t)\\
&+\lambda^2\lvert  \widetilde{f}^{\sigma}(\lambda,\xi)\rvert^2 \cos^2(\lambda t)+\lvert \widetilde{g}^{\sigma}(\lambda,\xi)\rvert^2\sin^2(\lambda t)\\
=&\lambda^2\lvert  \widetilde{f}^{\sigma}(\lambda,\xi)\rvert^2+\lvert\widetilde{g}^{\sigma}(\lambda,\xi)\rvert^2.
\end{align*}
Therefore the total energy
is given by 
\begin{align*}
2\mathcal{E}(\varphi)(t)=&\lVert \lambda \tilde{f}^{\sigma}\rVert^2_{L^2((0,\infty)\times \partial X,C_0 \vert \mathbf{c}(\lambda)\vert^{-2} \,d\mu_{\sigma}(\xi)\,d\lambda)}\\
&+\lVert \tilde{g}^{\sigma}\rVert^2_{L^2((0,\infty)\times \partial X,C_0 \vert \mathbf{c}(\lambda)\vert^{-2} \,d\mu_{\sigma}(\xi)\,d\lambda)}
\end{align*}
and is independent of the time. 
\end{proof}
Note that using a different method one can proof the conservation of energy of solutions of the shifted wave equation on an arbitrary oriented Riemannian manifolds (see \cite{helgason1994geometric}{CH.V Lemma 5.12}). But via this proof one does not obtain the explicit expression for the total energy above. 
Using Theorem \ref{thm:energy}, Greens identity and the fact that $f$ has compact support we obtain that:
\begin{align*}
2\mathcal{E}(\varphi)=\lVert g \rVert^2_{L^2(X)}+\lVert \nabla f \rVert^2_{L^2(X)}-\rho^2\lVert f \rVert^2_{L^2(X)}.
\end{align*}
Hence comparing the above with the expression for the energy from Theorem \ref{thm:energy} we obtain using the Plancherel theorem and Lemma \ref{lemma:fourierdelta}
\begin{align}
\lVert \nabla f \rVert^2_{L^2(X)}&-\rho^2\lVert f \rVert^2_{L^2(X)}\label{eq:gradient}\\
&=\lVert \lambda \tilde{f}^{\sigma}\rVert^2_{L^2((0,\infty)\times \partial X,C_0 \vert \mathbf{c}(\lambda)\vert^{-2} \,d\mu_{\sigma}(\xi)\,d\lambda)}.\nonumber
\end{align}
In the next section we are going to investigate the term on the right hand side to obtain bounds on the energy just using the $L^2$ norms of the initial conditions. 

\section{A Paley-Wiener Type Theorem on Harmonic Manifolds of Rank One}
The classical  Paley-Wiener theorem (see for instance \cite[p.161]{Yosida_1974}) gives shape bounds on the decay of the Fourier transform of a compactly supported function on $\R^n$:
\begin{satz} \label{thm:classPW}
A holomorphic function $F:\C^n\to\C$ is the Fourier transform of a smooth function with support in the ball $\{x\in \R^n\mid \lVert x\rVert\leq R\}$ if and only if for every $N\in \N_{>0}$ there exists a constant $C_N>0$ such that 
$$ \lvert F(\lambda)\rvert\leq C_N(1+\lvert \lambda \rvert )^{-N}e^{R\lvert\operatorname{Im} \lambda\rvert}\quad \forall \lambda \in \C.$$
\end{satz}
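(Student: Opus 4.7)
The plan is to prove both directions of the equivalence. The forward direction is elementary, relying on integration by parts together with the trivial bound on $|e^{i\lambda\cdot x}|$ on the support of $f$. The converse is the substantive part and proceeds by defining $f$ via inverse Fourier transform and then establishing compact support by a contour-shifting argument in each complex coordinate.

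For the forward direction, let $f\in C^\infty_c(\R^n)$ with $\supp f\subset\overline{B(0,R)}$ and set $F(\lambda):=\int_{\R^n}f(x)e^{i\lambda\cdot x}\,dx$. Since $|e^{i\lambda\cdot x}|\le e^{R|\operatorname{Im}\lambda|}$ on $\supp f$, the integral converges for every $\lambda\in\C^n$ and defines an entire function, as one sees by differentiating under the integral sign in each variable. Integration by parts gives $\lambda^\alpha F(\lambda)=i^{|\alpha|}\int_{\R^n}(\partial^\alpha f)(x)e^{i\lambda\cdot x}\,dx$ for every multi-index $\alpha$, whence
$$|\lambda^\alpha F(\lambda)|\le \|\partial^\alpha f\|_{L^1}\,e^{R|\operatorname{Im}\lambda|}.$$
Applying this for all $|\alpha|\le N$ and combining with the trivial bound $|F(\lambda)|\le \|f\|_{L^1}e^{R|\operatorname{Im}\lambda|}$ yields the claimed estimate with $C_N$ depending on finitely many $L^1$-norms of derivatives of $f$.

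For the converse, assume $F$ satisfies the hypothesis. Taking $N>n$ renders $F$ integrable, so set
$$f(x):=(2\pi)^{-n}\int_{\R^n}F(\lambda)e^{-i\lambda\cdot x}\,d\lambda.$$
Differentiation under the integral, justified by applying the hypothesis with $N$ arbitrarily large, shows $f\in C^\infty(\R^n)$, and $L^2$-Fourier inversion confirms $\hat{f}=F$. It remains to prove $\supp f\subset\overline{B(0,R)}$. Fix $x$ with $|x|>R$. Since $\lambda\mapsto F(\lambda)e^{-i\lambda\cdot x}$ is entire and decays rapidly in real directions uniformly on bounded strips, Cauchy's theorem applied coordinate-wise permits shifting the contour: for any $\eta\in\R^n$,
$$f(x)=(2\pi)^{-n}\int_{\R^n}F(\lambda+i\eta)e^{-i(\lambda+i\eta)\cdot x}\,d\lambda.$$
Choosing $\eta=t\,x/|x|$ with $t>0$ and invoking the hypothesis,
$$|f(x)|\le C_N\,(2\pi)^{-n}e^{(R-|x|)t}\int_{\R^n}(1+|\lambda+i\eta|)^{-N}\,d\lambda\longrightarrow 0\text{ as }t\to\infty,$$
since $R-|x|<0$. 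Hence $f(x)=0$ and the support condition follows.

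The principal technical hurdle is the rigorous justification of the multidimensional contour shift. I would proceed inductively: in each variable $\lambda_j$ separately, apply Cauchy's theorem to the entire function of $\lambda_j$ on a rectangle with horizontal sides $[-M,M]$ and vertical sides of length $|\eta_j|$, holding the remaining coordinates fixed; the polynomial decay with $N>n$ ensures that the two vertical side contributions vanish as $M\to\infty$ and that Fubini applies at each stage to exchange orders of integration. Iterating through all $n$ coordinates produces the displayed shifted representation uniformly in $\eta$ in compact sets, after which the exponential factor $e^{(R-|x|)t}$ delivers the vanishing of $f(x)$ outside the ball of radius $R$.
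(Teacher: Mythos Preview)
Your proof is correct and is the standard textbook argument for the classical Paley--Wiener theorem. Note, however, that the paper does not actually prove this statement: it is quoted as a classical result with a reference to \cite[p.~161]{Yosida_1974}, and is used only as a black box in the proof of Theorem~\ref{thm:PW2}. So there is no ``paper's own proof'' to compare against here; what you have written is essentially the argument one finds in Yosida or H\"ormander, and it would serve perfectly well if the paper had chosen to include a proof.
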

In this section we want to show a weaker statement (Theorem \ref{thm:PWschwach}) namely that a sufficient decay of the derivatives of a function forces there Fourier transform to have support within a bounded set. 
Using mainly Lemma \ref{lemma:fourierdelta} and the Plancherel theorem this is an extension of a Paley-Wiener type theorem from \cite{AMBP_2010__17_2_327_0} to harmonic manifolds of rank one.
The proof follows the lines in \cite{AMBP_2010__17_2_327_0} closely with the addition of some details, but the statement of the Paley-Wiener type theorem is weaker then the one in \cite{AMBP_2010__17_2_327_0} since it is still not known if the Fourier transform on harmonic manifolds is surjective. 
Furthermore we use this result to show that the total energy of a solution to the shifted wave equation with specific initial conditions is bounded by bounds only depending on the $L^2$ norm of the initial conditions and bounds on the support of the Fourier transform of the initial conditions. 
Let $g:\R^+\times\partial X\to \C$ be a measurable function with respect to the measure $C_0 \vert \mathbf{c}(\lambda)\vert^{-2} \,d\mu_{\sigma}(\xi)\,d\lambda$ then we define 
$$R_g:=\sup_{(\lambda,\xi)\in\operatorname{supp }g}\lvert\lambda\rvert.$$ Note that this might be infinite. 
\begin{lemma}\label{lemma:PW}
Let $g$ be a function on $\R^+\times\partial X$ such that $(\lambda,\xi)\to\lambda^jg(\lambda,\xi)$ belongs to $L^2(\R^+\times\partial X,C_0 \vert \mathbf{c}(\lambda)\vert^{-2} \,d\mu_{\sigma}(\xi)\,d\lambda)$ for all integers $j$. Then 
$$ R_g=\lim_{j\to\infty}\Big (C_0 \int_0^{\infty}\int_{\partial X} \lambda^{2j}\lvert g(\lambda,\xi)\rvert^2  \vert \mathbf{c}(\lambda)\vert^{-2} \,d\mu_{\sigma}(\xi)\,d\lambda\Big)^{1/(2j)} $$
\end{lemma}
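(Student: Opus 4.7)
The plan is to interpret the expression on the right-hand side as a sequence of $L^{2j}$-norms of the coordinate function $\lambda$ with respect to a suitable finite measure, and then invoke the standard fact that $L^p$-norms converge to the $L^\infty$-norm as $p\to\infty$ (for a finite measure). Concretely, introduce the positive Borel measure
\begin{equation*}
d\nu(\lambda,\xi) := C_0\,\lvert g(\lambda,\xi)\rvert^2\,\lvert \mathbf{c}(\lambda)\rvert^{-2}\,d\mu_{\sigma}(\xi)\,d\lambda
\end{equation*}
on $\R^+\times \partial X$, and let $h(\lambda,\xi):=\lambda$. Then
\begin{equation*}
C_0 \int_0^{\infty}\!\!\int_{\partial X} \lambda^{2j}\lvert g(\lambda,\xi)\rvert^2  \lvert \mathbf{c}(\lambda)\rvert^{-2} \,d\mu_{\sigma}(\xi)\,d\lambda \;=\; \lVert h\rVert_{L^{2j}(\nu)}^{2j},
\end{equation*}
so what must be proved is that $\lVert h\rVert_{L^{2j}(\nu)}\to R_g$ as $j\to\infty$.

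The hypothesis with $j=0$ gives $\nu(\R^+\times\partial X)=\lVert g\rVert_{L^2}^2<\infty$, so $\nu$ is a finite measure. I would then invoke (or quickly re-derive) the classical statement that, for a finite measure $\nu$ and a non-negative measurable function $h$, one has
\begin{equation*}
\lim_{j\to\infty} \lVert h\rVert_{L^{2j}(\nu)} = \lVert h\rVert_{L^{\infty}(\nu)} \;=\; \nu\text{-}\mathrm{ess\,sup}\,h,
\end{equation*}
with the understanding that both sides may equal $+\infty$ simultaneously. The upper bound $\limsup \lVert h\rVert_{L^{2j}(\nu)}\le \lVert h\rVert_\infty$ follows by estimating the integrand by $h\cdot h^{2j-1}\le \lVert h\rVert_\infty^{2j-2} h^2$ and using $\nu(\R^+\times\partial X)<\infty$; the lower bound follows by restricting the integral to the set $\{h>M\}$ for any $M<\lVert h\rVert_\infty$ and noting it has positive $\nu$-measure, so $\lVert h\rVert_{L^{2j}(\nu)}\ge M\,\nu(\{h>M\})^{1/(2j)}\to M$.

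The remaining step, which I expect to be the main (though still routine) obstacle, is to identify $\nu\text{-}\mathrm{ess\,sup}\, h$ with the geometric quantity $R_g=\sup_{(\lambda,\xi)\in\operatorname{supp} g}\lvert\lambda\rvert$. Since $\mathbf{c}(\lambda)^{-2}$ is a positive continuous weight on $(0,\infty)$, the measure $\nu$ is mutually absolutely continuous with $\lvert g\rvert^2 \,d\mu_{\sigma}\,d\lambda$, and hence $\nu$ and $g$ have the same (essential) support in $\R^+\times\partial X$. Thus $h\le R_g$ holds $\nu$-a.e., giving $\nu\text{-}\mathrm{ess\,sup}\,h\le R_g$; conversely, for every $\varepsilon>0$ the set $\{\lambda>R_g-\varepsilon\}\cap \operatorname{supp}g$ is, by definition of the support, of positive $\nu$-measure, so $\nu\text{-}\mathrm{ess\,sup}\,h\ge R_g-\varepsilon$. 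Letting $\varepsilon\to 0$ gives equality, and the two cases $R_g<\infty$ and $R_g=\infty$ are handled uniformly by the extended-real statement above.
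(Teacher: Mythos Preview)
Your proof is correct and is essentially the same argument as the paper's, just packaged more abstractly: the paper establishes the $\liminf\ge R_g-\epsilon$ and $\limsup\le R_g$ bounds directly (and handles $R_g=\infty$ separately), which is precisely the standard proof of the $L^{p}\to L^{\infty}$ convergence you invoke for the finite measure $\nu$. One minor phrasing point: in your final step you write that $\{\lambda>R_g-\varepsilon\}\cap\operatorname{supp}g$ has positive $\nu$-measure ``by definition of the support''; strictly, what the definition of essential support gives is that the \emph{open} set $\{\lambda>R_g-\varepsilon\}$ meets $\operatorname{supp}g$ and hence carries positive $\nu$-mass, which is what you then use.
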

\begin{proof}
First we assume $R_g<\infty$ then let $0<\epsilon<R_g$ and we get for some $\delta >0$ that:
\begin{align*}
C_0 \int_{0}^{R_g-\epsilon}\int_{\partial X} \lambda^{2j}\lvert g(\lambda,\xi)\rvert^2\vert \mathbf{c}(\lambda)\vert^{-2} \,d\mu_{\sigma}(\xi)\,d\lambda\geq (R_g-\epsilon)^{2j+1}\delta.
\end{align*}
Hence we have:
\begin{align*}
\lim\inf_{j\to\infty}&\Big (C_0\int_0^{\infty}\int_{\partial X} \lambda^{2j}\lvert g(\lambda,\xi)\rvert^2 \vert \mathbf{c}(\lambda)\vert^{-2} \,d\mu_{\sigma}(\xi)\,d\lambda\Big)^{1/(2j)}\\
&\geq \lim\inf_{j\to\infty}\Big (C_0 \int_{0}^{R_g-\epsilon}\int_{\partial X} \lambda^{2j}\lvert g(\lambda,\xi)\rvert^2\vert \mathbf{c}(\lambda)\vert^{-2} \,d\mu_{\sigma}(\xi)\,d\lambda\Big)^{1/(2j)}\\
&\geq R_g-\epsilon.
\end{align*}
On the other hand: 
\begin{align*}
\lim\sup_{j\to\infty}&\Big (C_0\int_0^{\infty}\int_{\partial X} \lambda^{2j}\lvert g(\lambda,\xi)\rvert^2 \vert \mathbf{c}(\lambda)\vert^{-2} \,d\mu_{\sigma}(\xi)\,d\lambda\Big)^{1/(2j)}\\
&\leq R_g\lim\sup_{j\to\infty}\lVert g\rVert^{1/j}_{L^2(\R^+\times \partial X,C_0 \vert \mathbf{c}(\lambda)\vert^{-2} \,d\mu_{\sigma}(\xi)\,d\lambda)}\\
&=R_g.
\end{align*}
Since $\epsilon>0$ is arbitrary this completes the case $R_g<\infty$. 
Now suppose $R_g=\infty$. Then for every $M>0$ we have:
\begin{align*}
C_0 \int_M^{\infty} \int_{\partial X} \lambda^{2j}\lvert g(\lambda,\xi)\rvert^2\vert \mathbf{c}(\lambda)\vert^{-2} \,d\mu_{\sigma}(\xi)\,d\lambda>0\\
\end{align*}
and 
\begin{align*}
\lim\inf_{j\to\infty}&\Big (C_0\int_0^{\infty}\int_{\partial X} \lambda^{2j}\lvert g(\lambda,\xi)\rvert^2 \vert \mathbf{c}(\lambda)\vert^{-2} \,d\mu_{\sigma}(\xi)\,d\lambda\Big)^{1/(2j)}\\
&\geq \lim\inf_{j\to\infty}\Big(C_0\int_M^{\infty}\int_{\partial X} \lambda^{2j}\lvert g(\lambda,\xi)\rvert^2C_0 \vert \mathbf{c}(\lambda)\vert^{-2} \,d\mu_{\sigma}(\xi)\,d\lambda\Big)^{1/(2j)}\\
&\geq M.
\end{align*}
\end{proof}
\begin{defi}
Let $R>0$. We define:
 \begin{align*}
& L^2_R(\R^+\times\partial X,C_0 \vert \mathbf{c}(\lambda)\vert^{-2} \,d\mu_{\sigma}(\xi)\,d\lambda)\\
 &:=\{g\in   L^2(\R^+\times\partial X,C_0 \vert \mathbf{c}(\lambda)\vert^{-2} \,d\mu_{\sigma}(\xi)\,d\lambda)
 \mid R_g=R\}
\end{align*}
and
\begin{align*}PW^2_R(X):=\{f\in C^{\infty}(X)\mid &\Delta^jf\in L^2(X)\,\forall j\in \N \\
&\text{and} \lim_{j\to\infty}\lVert (\Delta +\rho^2)^jf\rVert_2^{1/(2j)}=R\}.
\end{align*}

\end{defi}
\begin{satz}\label{thm:PWschwach}
Let $R>0$ then, if it exists, the inverse Fourier transform of a function in $L^2_R(\R^+\times\partial X,C_0 \vert \mathbf{c}(\lambda)\vert^{-2} \,d\mu_{\sigma}(\xi)\,d\lambda)$ belongs to $PW^2_R(X)$ and the Fourier transform maps $PW^2_R(X)$to  $$L^2_R(\R^+\times\partial X,C_0 \vert \mathbf{c}(\lambda)\vert^{-2} \,d\mu_{\sigma}(\xi)\,d\lambda).$$ 
\end{satz}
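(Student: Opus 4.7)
The strategy is to express the quantity $\|(\Delta+\rho^2)^j f\|_{L^2(X)}$ that appears in the definition of $PW^2_R(X)$ in terms of a weighted $L^2$ norm on the Fourier side, and then invoke Lemma~\ref{lemma:PW} directly. Both assertions of the theorem will follow from a single asymptotic identity, so I treat the two directions in parallel.

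First I iterate Lemma~\ref{lemma:fourierdelta}: assuming $\Delta^k f\in L^2(X)$ for every $k\in\N$, an induction on $j$ (using the binomial expansion $(\Delta+\rho^2)^j=\sum_{k=0}^{j}\binom{j}{k}\rho^{2(j-k)}\Delta^k$ to guarantee that every intermediate power sits in $L^2(X)$ so that Lemma~\ref{lemma:fourierdelta} may be applied at each stage) yields
\[
\widetilde{(\Delta+\rho^2)^j f}^{\sigma}(\lambda,\xi)=(-\lambda^{2})^{j}\,\tilde{f}^{\sigma}(\lambda,\xi)
\]
almost everywhere on $(0,\infty)\times\partial X$. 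The Plancherel theorem (Theorem~\ref{Placherel Theorem}) then rewrites the $L^2(X)$ norm as
\[
\|(\Delta+\rho^2)^j f\|_{L^2(X)}^{2}=C_0\int_0^{\infty}\int_{\partial X}\lambda^{4j}\,|\tilde{f}^{\sigma}(\lambda,\xi)|^{2}\,|\mathbf{c}(\lambda)|^{-2}\,d\mu_\sigma(\xi)\,d\lambda,
\]
and applying Lemma~\ref{lemma:PW} to $g=\tilde{f}^{\sigma}$ with exponent $2j$ in place of $j$ produces
\[
\lim_{j\to\infty}\|(\Delta+\rho^2)^j f\|_{L^2(X)}^{1/(2j)}=R_{\tilde{f}^{\sigma}}.
\]

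Both directions of the theorem are now immediate. If $f\in PW^2_R(X)$ the left-hand side equals $R$ by definition, hence $R_{\tilde{f}^{\sigma}}=R$ and $\tilde{f}^{\sigma}\in L^2_R(\R^+\times\partial X,C_0|\mathbf{c}(\lambda)|^{-2}\,d\mu_\sigma(\xi)\,d\lambda)$. Conversely, given $g\in L^2_R$ whose inverse Fourier transform $f$ exists, finiteness of $R_g=R$ forces $g$ to vanish essentially outside $[0,R]\times\partial X$, so $\lambda^{2j}g\in L^2$ for every $j\in\N$. Plancherel combined with the same iterated identity (read now in the reverse direction, giving $\widetilde{\Delta^{j}f}^{\sigma}=(-1)^{j}(\lambda^{2}+\rho^{2})^{j}g$) yields $\Delta^{j}f\in L^2(X)$ for every $j$, and the limit identity above then certifies $f\in PW^2_R(X)$.

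The principal obstacle is the bookkeeping behind the iteration: Lemma~\ref{lemma:fourierdelta} only delivers the Fourier identity under the joint hypothesis that $f$ and $\Delta f$ both lie in $L^2(X)$, so each iterate $(\Delta+\rho^{2})^{k}f$ must be verified to be square integrable before the next application. In the forward direction this is directly supplied by the defining hypothesis $\Delta^{j}f\in L^{2}(X)$ for all $j$; in the reverse direction it has to be extracted from the essential boundedness of the support of $g$, which is precisely what $R_{g}<\infty$ encodes. It is this two-sided extraction of the exact value $R$ (not merely an upper bound) that makes the equivalence sharp, and it is the only step in the argument where Lemma~\ref{lemma:PW} is doing real work rather than just packaging.
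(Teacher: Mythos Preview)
Your argument is correct and follows essentially the same route as the paper: iterate Lemma~\ref{lemma:fourierdelta} to turn $(\Delta+\rho^2)^j$ into multiplication by $(-\lambda^2)^j$, invoke Plancherel, and read off the value of the limit via Lemma~\ref{lemma:PW}. The only point the paper makes explicit that you leave implicit is that in the reverse direction the inverse Fourier transform $f$ is actually smooth (needed for membership in $PW^2_R(X)$), which follows from dominated convergence since $g$ is essentially supported in $[0,R]\times\partial X$.
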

\begin{proof}
Let $g\in  L^2_R(\R^+\times\partial X,C_0 \vert \mathbf{c}(\lambda)\vert^{-2} \,d\mu_{\sigma}(\xi)\,d\lambda)$ and denote its inverse Fourier transformed with respect to $\sigma\in X$ by $f$. $f$ is smooth by the Lebesgue's dominant convergence theorem and $f$ satisfies condition $(1)$ since by Lemma \ref{lemma:fourierdelta} we have:
\begin{align*}
\Delta^jf=(-1)^jC_0 \int_0^{\infty}\int_{\partial X} &(\lambda^2+\rho^2)^j \tilde{f}^{\sigma}(\lambda,\xi)\\\cdot &e^{(i\lambda-\rho)B_{\xi,\sigma}(x)}\vert \mathbf{c}(\lambda)\vert^{-2} \,d\mu_{\sigma}(\xi)\,d\lambda
\end{align*}
and $\tilde{f}^{\sigma}\in L^2_R(\R^+\times\partial X,C_0 \vert \mathbf{c}(\lambda)\vert^{-2} \,d\mu_{\sigma}(\xi)\,d\lambda)$.
 Using the Plancherel  theorem, Lemma \ref{lemma:fourierdelta} and Lemma \ref{lemma:PW} we have:
\begin{align*}
\lim_{j\to\infty}\lVert &(\Delta +\rho^2)^jf\rVert_2^{1/(2j)}&\\
&=\lim_{j\to\infty}\Big (C_0 \int_0^{\infty}\int_{\partial X} \lambda^{2j}\lvert \tilde{f}^{\sigma}(\lambda,\xi)\rvert^2\vert \mathbf{c}(\lambda)\vert^{-2} \,d\mu_{\sigma}(\xi)\,d\lambda\Big)^{1/(2j)}\\
&=\lim_{j\to\infty}\Big (C_0 \int_0^{\infty}\int_{\partial X} \lambda^{2j}\lvert g (\lambda,\xi)\rvert^2\vert \mathbf{c}(\lambda)\vert^{-2} \,d\mu_{\sigma}(\xi)\,d\lambda\Big)^{1/(2j)}\\
&=R.
\end{align*}
Now if $f\in PW^2_R(X)$, then by the Plancherel theorem and Lemma \ref{lemma:fourierdelta} we have:
$\Delta^{2j}\tilde{f}^{\sigma}$ is in  $L^2_R(\R^+\times\partial X,C_0 \vert \mathbf{c}(\lambda)\vert^{-2} \,d\mu_{\sigma}(\xi)\,d\lambda)$ and by Lemma \ref{lemma:PW} we have $R_g=R$.
\end{proof}
\begin{folg}
Let $\sigma\in X$ and $R>0$ then for a smooth solution of the shifted wave equation $\varphi:X\times \R\to\C$  with initial conditions
\begin{align*}
\varphi(x,0)&=f(x)\in PW^2_R(X)\\
\left. \frac{\partial}{\partial t}\right|_{t=0} \varphi(x,t)&=g(x)\in C^{\infty}_c(X)
\end{align*}
we have
\begin{align*}
2\mathcal{E}(\varphi)(t)\leq R^2\lVert f\rVert^2_{L^2(X)}+\lVert g\rVert^2_{L^2(X)}.
\end{align*}
Furthermore we obtain:
\begin{align*}
\lVert \nabla f\rVert^2_{L^2(X)}\leq (R^2+\rho^2)\lVert f\rVert^2_{L^2(X)}.
\end{align*}
\end{folg}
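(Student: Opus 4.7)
The plan is to derive both inequalities as essentially direct consequences of Theorem \ref{thm:energy} combined with the Paley--Wiener type result Theorem \ref{thm:PWschwach}. Since $f\in PW^2_R(X)$, Theorem \ref{thm:PWschwach} guarantees that $\tilde f^\sigma$ lies in $L^2_R(\R^+\times\partial X, C_0 \lvert \mathbf{c}(\lambda)\rvert^{-2}\,d\mu_\sigma(\xi)\,d\lambda)$, which by the definition of $R_{\tilde f^\sigma}$ forces $\tilde f^\sigma(\lambda,\xi)$ to vanish (almost everywhere) outside $[0,R]\times\partial X$. On this support one has the elementary pointwise bound $\lambda^2\leq R^2$.

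First I would apply this pointwise bound to the integrand defining $\lVert \lambda\tilde f^\sigma\rVert^2$, obtaining
\begin{align*}
\lVert \lambda\tilde f^\sigma\rVert^2_{L^2(\ldots)} \leq R^2\,\lVert\tilde f^\sigma\rVert^2_{L^2(\ldots)} = R^2\,\lVert f\rVert^2_{L^2(X)},
\end{align*}
where the equality uses the Plancherel isometry of Theorem \ref{Placherel Theorem}. A second application of Plancherel gives $\lVert\tilde g^\sigma\rVert^2_{L^2(\ldots)}=\lVert g\rVert^2_{L^2(X)}$. Substituting both bounds into the explicit formula for the total energy furnished by Theorem \ref{thm:energy} yields the first inequality.

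For the second inequality, I would invoke the identity (\ref{eq:gradient}), which was established just before the statement via Green's identity, namely
\begin{align*}
\lVert\nabla f\rVert^2_{L^2(X)} - \rho^2\lVert f\rVert^2_{L^2(X)} = \lVert\lambda\tilde f^\sigma\rVert^2_{L^2(\ldots)},
\end{align*}
and apply the same support-based bound $\lVert\lambda\tilde f^\sigma\rVert^2\leq R^2\lVert f\rVert^2_{L^2(X)}$ to the right-hand side. Rearranging produces $\lVert\nabla f\rVert^2_{L^2(X)}\leq (R^2+\rho^2)\,\lVert f\rVert^2_{L^2(X)}$.

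I do not anticipate any real obstacle: the entire argument reduces to reading the two Theorems together with the Plancherel identity. The only minor point worth checking is that $R_{\tilde f^\sigma}=R$ actually forces $\tilde f^\sigma$ to be supported in $\{\lambda\leq R\}$, but this is the very definition of $R_g$ as the supremum of $\lvert\lambda\rvert$ over the support of $g$. No integrability issues arise for $\tilde g^\sigma$ since $g\in C^\infty_c(X)$ and Theorem \ref{Placherel Theorem} applies directly.
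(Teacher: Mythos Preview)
Your proposal is correct and follows essentially the same approach as the paper: invoke Theorem \ref{thm:PWschwach} to place $\tilde f^\sigma$ in $L^2_R$, use the resulting support bound $\lambda\leq R$ to get $\lVert\lambda\tilde f^\sigma\rVert^2\leq R^2\lVert\tilde f^\sigma\rVert^2$, and then combine Theorem \ref{thm:energy}, the Plancherel theorem, and identity (\ref{eq:gradient}) exactly as you describe. The paper's proof is slightly more terse (it simply writes ``since $f\in PW^2_R(X)$ we obtain'' the key inequality (\ref{eq:gradient2}) without spelling out the support argument), but the content is identical.
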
 
\begin{proof}
 We have by Theorem \ref{thm:energy} that
\begin{align*}
2\mathcal{E}(\varphi)(t)=&\lVert \lambda \tilde{f}^{\sigma}\rVert^2_{L^2((0,\infty)\times \partial X,C_0 \vert \mathbf{c}(\lambda)\vert^{-2} \,d\mu_{\sigma}(\xi)\,d\lambda)}\\
&+\lVert \tilde{g}^{\sigma}\rVert^2_{L^2((0,\infty)\times \partial X,C_0 \vert \mathbf{c}(\lambda)\vert^{-2} \,d\mu_{\sigma}(\xi)\,d\lambda)}
\end{align*}
and since $f\in PW^2_R(X)$ we obtain:
\begin{align}\label{eq:gradient2}
\lVert \lambda \tilde{f}^{\sigma}\rVert^2&_{L^2((0,\infty)\times \partial X,C_0 \vert \mathbf{c}(\lambda)\vert^{-2} \,d\mu_{\sigma}(\xi)\,d\lambda)}\\ &\leq R^2 \lVert \tilde{f}^{\sigma}\rVert^2_{L^2((0,\infty)\times \partial X,C_0 \vert \mathbf{c}(\lambda)\vert^{-2} \,d\mu_{\sigma}(\xi)\,d\lambda)}.\nonumber
\end{align}
 Therefore applying the Plancherel theorem yields: 
\begin{align*}
2\mathcal{E}(\varphi)(t)\leq R^2\lVert f\rVert^2_{L^2(X)}+\lVert g\rVert^2_{L^2(X)}.
\end{align*}
Now using equation (\ref{eq:gradient}), equation (\ref{eq:gradient2}) and the Plancherel theorem we conclude: 
\begin{align*}
\lVert \nabla f\rVert^2_{L^2(X)}\overset{\text{(\ref{eq:gradient})}}&{=} \lVert \lambda \tilde{f}^{\sigma}\rVert^2_{L^2((0,\infty)\times \partial X,C_0 \vert \mathbf{c}(\lambda)\vert^{-2} \,d\mu_{\sigma}(\xi)\,d\lambda)}+\rho^2\lVert f \rVert^2_{L^2(X)}\\
\overset{\text{(\ref{eq:gradient2})}}&{\leq} R^2 \lVert  \tilde{f}^{\sigma}\rVert^2_{L^2((0,\infty)\times \partial X,C_0 \vert \mathbf{c}(\lambda)\vert^{-2} \,d\mu_{\sigma}(\xi)\,d\lambda)}+\rho^2\lVert f \rVert^2_{L^2(X)}\\
\overset{\text{Plancherel theorem}}&{=} R^2\lVert f\rVert^2_{L^2(X)}+\rho^2\lVert f \rVert^2_{L^2(X)}\\
&=(R^2+\rho^2)\lVert f\rVert^2_{L^2(X)}.
\end{align*}
\end{proof}

\section{The Paley Wiener Theorem for Harmonic Manifolds of Rank One}
\begin{satz}\label{thm:PW2}
Let $f:X\to \C$ be a smooth function with compact support in the ball $B(\sigma,R)$ for some $\sigma\in X$ and $R>0$ then the Fourier transform of $f$ based at $\sigma$
\begin{align*}
\tilde{f}^{\sigma}(\lambda,\xi)=\int_X f(x)e^{(-i\lambda-\rho)B_{\xi,\sigma}(x)}\,dx
\end{align*}
is a holomorphic function in $\lambda$ and we have:
\begin{align*}
\sup_{\lambda\in \C,\,\xi\in\partial X} e^{-R\lvert\operatorname{Im}( \lambda) \rvert} (1+\lvert \lambda\rvert)^{N} \lvert \tilde{f}^{\sigma}(\lambda,\xi)\rvert<\infty \quad \forall N\in \N_{>0}.
\end{align*}
\end{satz}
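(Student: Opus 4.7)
The plan is to reduce the theorem to the classical one-variable Paley--Wiener theorem (Theorem \ref{thm:classPW}) via a horospherical Radon transform, as indicated in the introduction. For each $\xi\in\partial X$, let $v\in S_\sigma X$ be the unique vector with $c_v(\infty)=\xi$ (so that $B_{\xi,\sigma}=b_v$), and define
\begin{equation*}
\mathcal{R}f(\xi,s) := e^{-\rho s}\int_{H_v^s}f(z)\,dz.
\end{equation*}
Integrating in horospherical coordinates via the co-area formula applied to $b_v$, exactly as in the proof of the Abel transform lemma in Section 3, the Fourier integral factors as
\begin{equation*}
\tilde{f}^\sigma(\lambda,\xi) = \int_\R e^{(-i\lambda-\rho)s}\int_{H_v^s}f(z)\,dz\,ds = \int_\R e^{-i\lambda s}\,\mathcal{R}f(\xi,s)\,ds,
\end{equation*}
so the problem reduces to a Euclidean Fourier transform in the single variable $s$.

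I would then establish two properties of $\mathcal{R}f(\xi,\cdot)$. First, since Busemann functions are $1$-Lipschitz and $B_{\xi,\sigma}(\sigma)=0$, the bound $|B_{\xi,\sigma}(y)|\leq d(\sigma,y)\leq R$ on $\operatorname{supp}(f)$ shows $\operatorname{supp}(\mathcal{R}f(\xi,\cdot))\subset[-R,R]$ uniformly in $\xi$. Second, using the equivalent representation
\begin{equation*}
\mathcal{R}f(\xi,s) = e^{\rho s}\int_{H_v^0}f(\Psi_{v,s}(z))\,dz,
\end{equation*}
which follows from (\ref{eq:diffeo2}) together with $h=2\rho$, combined with the smooth dependence of $\Psi_{v,s}$ on $(s,v)$ (inherited from the analyticity of $b_v$ noted in Section 2), I differentiate under the integral and use compactness of $\partial X$ in the cone topology to obtain for each $N\in\N_{\geq 0}$ a uniform bound
\begin{equation*}
M_N := \sup_{\xi\in\partial X,\, s\in[-R,R]}\bigl|\partial_s^N\mathcal{R}f(\xi,s)\bigr|<\infty.
\end{equation*}

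The conclusion then follows from classical one-variable arguments. Holomorphicity of $\lambda\mapsto\tilde{f}^\sigma(\lambda,\xi)$ is immediate by differentiation under the integral over the compact set $\operatorname{supp}(f)$. Integrating by parts $N$ times in $s$ gives
\begin{equation*}
(i\lambda)^N\tilde{f}^\sigma(\lambda,\xi)=\int_{-R}^R e^{-i\lambda s}\,\partial_s^N\mathcal{R}f(\xi,s)\,ds,
\end{equation*}
so $|\lambda|^N|\tilde{f}^\sigma(\lambda,\xi)|\leq 2R\,M_N\,e^{R|\operatorname{Im}\lambda|}$. Combining this with the $N=0$ bound and $(1+|\lambda|)^N\leq 2^N\max(1,|\lambda|^N)$ yields the claimed estimate, uniformly in $\xi$.

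The hard part will be justifying the uniform smoothness bound $M_N<\infty$. Since the horospheres $H_v^s$ themselves move with $\xi$, the required derivative estimates demand that one transports the integral onto the fixed submanifold $H_v^0$ via $\Psi_{v,s}$ and then tracks how $\Psi_{v,s}$ and the induced Riemannian measure on $H_v^0$ depend smoothly on $v$. In the symmetric or Damek--Ricci setting this is handled by the transitive group action; in a general rank-one harmonic manifold one must argue intrinsically, using the analyticity of the Busemann function together with compactness of $\partial X$ in the cone topology.
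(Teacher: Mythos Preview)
Your proposal is correct and follows essentially the same route as the paper: both reduce to the one-variable Paley--Wiener theorem via the horospherical Radon transform and the co-area formula (the paper packages these as Lemmas \ref{lemma:PW1}--\ref{lemma:PW2}), then invoke compactness of $\partial X$ for the uniformity in $\xi$. The only minor difference is that the paper applies the classical theorem as a black box for each fixed $\xi$ to obtain constants $C_{N,\xi}$ and then passes to a uniform $C_N$ via continuity of $\tilde f^\sigma(\lambda,\xi)$ in $\xi$, whereas you first secure uniform derivative bounds $M_N$ on $\mathcal{R}f(\xi,\cdot)$ and carry out the integration by parts explicitly; your variant makes the uniformity step more transparent, since the paper's passage from $C_{N,\xi}$ to $C_N$ tacitly requires exactly the kind of joint regularity in $(\xi,s)$ that you isolate as the ``hard part.''
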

The above is a generalisation of theorem 4.5 in \cite{astengocamporesiblasio1997} but our method differs from theirs which relies on the homogeneity of Damek-Ricci spaces. Furthermore the boundary structure of the Damek-Ricci space $NA$ used consist of the non compact group $N$ wheres we use the geometric boundary which is equivalent to using the one point compactification of $N$, for an explanation of this correspondence see for example \cite[Section 3]{MR2474430}. 
The idea of the proof: We first show that for $f\in C^{\infty}_c(X)$ the Radon transform $\mathcal{R}_{\sigma}(f)(s,\xi)$, a modification of the one introduced in \cite{Rouvire2021}, is smooth in $s$. Then we argue that it vanishes for $s>R$ and all $\xi \in \partial X$. Using the connection of the Radon transform and the Fourier transform via the Euclidean Fourier transform we apply the classical Paley-Wiener theorem to show the claim. This approach is also used by Helgason to show the Paley Wiener theorem for non compact symmetric space (see \cite[p.278]{helgason1994geometric}).
We begin by introducing the Radon transform, a generalisation of the Abel transform to non radial functions. 
\subsection{The Radon transform}

We define the Radon transform  $\mathcal{R}_{\sigma}(f):\R\times \partial X \to \C $ at $\sigma\in X$ for $f\in C^{\infty}_c(X)$
 by:
$$\mathcal{R}_{\sigma}(f)(s,\xi):=e^{-\rho s}\int_{H_{\xi,\sigma}(s)}f(z)\,dz$$
for all $s\in \R$ and $\xi\in \partial X$.
Note that this definition differs from the one given in  \cite{Rouvire2021} by the factor $e^{-\rho s}$, furthermore all signs are swapped compared to his work since he chooses the Busemann function to be defined with the opposite sign to ours. We choose this factor deliberately to have a direct correspondence to the Fourier transform via the Euclidean Fourier transform in Lemma \ref{lemma:PW2} and obtain the Abel transform on radial functions.
\begin{lemma}\label{lemma:PW1}
Let $f\in C^{\infty}_c(X)$ then $\mathcal{R}_{\sigma}(f)(s,\xi)$ is smooth in $s$. 
\end{lemma}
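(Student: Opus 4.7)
The plan is to transfer the integral over the varying horosphere $H_{\xi,\sigma}(s)$ to a fixed horosphere using the diffeomorphism from equation (\ref{eq:diffeo2}), so that the $s$-dependence is pushed inside the integrand where it is manifestly smooth. Fix $v\in S_\sigma X$ with $c_v(\infty)=\xi$, so that $b_v = B_{\xi,\sigma}$ and $H_v^s = H_{\xi,\sigma}(s)$. Then equation (\ref{eq:diffeo2}) (with $h=2\rho$) rewrites
\begin{align*}
\mathcal{R}_{\sigma}(f)(s,\xi) \;=\; e^{-\rho s}\int_{H_v^s} f(z)\,dz \;=\; e^{\rho s}\int_{H_v^0} f(\Psi_{v,s}(z))\,dz.
\end{align*}
The map $(s,z)\mapsto \Psi_{v,s}(z) = \exp_z(-sN(z))$ is jointly smooth in $(s,z)$, so the whole question reduces to justifying differentiation under the integral sign to arbitrary order.

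The key step is a uniform-support estimate: for every $S>0$ there is a compact set $K\subset H_v^0$ such that $\operatorname{supp}\bigl(z\mapsto f(\Psi_{v,s}(z))\bigr)\subset K$ for all $s\in[-S,S]$. Indeed, if $f$ is supported in $B(\sigma,R)$ and $f(\Psi_{v,s}(z))\neq 0$, then $d(\sigma,\Psi_{v,s}(z))\leq R$; since $s\mapsto \Psi_{v,s}(z)$ is a unit-speed geodesic starting at $z$, one has $d(z,\Psi_{v,s}(z))=|s|$, and the triangle inequality gives $d(\sigma,z)\leq R+S$. Thus $K := H_v^0\cap \overline{B(\sigma,R+S)}$ does the job, and it is compact because $H_v^0$ is a closed embedded submanifold.

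With this uniform support control, the integrand and its iterated $s$-derivatives are continuous on the compact set $[-S,S]\times K$ and hence uniformly bounded; the standard theorem on differentiation under the integral sign then shows that $s\mapsto \int_{H_v^0} f(\Psi_{v,s}(z))\,dz$ is smooth, and multiplying by the smooth prefactor $e^{\rho s}$ preserves smoothness. I expect the uniform-support estimate to be the only point that requires any thought; once it is in place, everything else is routine calculus, and no use of rank one or of the $\mathbf{c}$-function is needed at this stage.
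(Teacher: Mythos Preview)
Your argument is correct and follows the same route as the paper: both rewrite the integral over the moving horosphere $H_v^s$ as an integral over the fixed horosphere $H_v^0$ via (\ref{eq:diffeo2}), and then read off smoothness in $s$ from the smoothness of $\Psi_{v,s}$ and of $f$. The paper's proof is terse and simply asserts that the regularity is the minimum of that of $f$ and $\Psi_s$; your uniform-support estimate makes explicit the justification for differentiating under the integral sign that the paper leaves implicit.
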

\begin{proof}
In coordinates given by the diffeomorphisem (\ref{eq:diffeo1}) and by  (\ref{eq:diffeo2}) the regularity of $\mathcal{R}_{\sigma}(f)(s,\xi)$  in $s$ is given by the minimum of the regularity of $f$ and $\Psi_s$. But since the Busemann functions and the metric are analytic $\Psi_s$ is analytic in $s$. Hence $\mathcal{R}_{\sigma}(f)(s,\xi)$ is smooth in $s$. 
\end{proof}
 The lemma is a version of the projection slice theorem for harmonic manifolds.
\begin{lemma}\label{lemma:PW3}
Let $f\in C^{\infty}_c(X)$ have support in the ball $B(\sigma,R)$ for some $\sigma\in X$ and $R>0$ then $\mathcal{R}_{\sigma}(f)(s,\xi)=0$ for $\lvert s\rvert >R$ and all $\xi\in \partial X$. 
\end{lemma}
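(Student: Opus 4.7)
The plan is to exploit the fact that the Busemann function is $1$-Lipschitz, so a horosphere far enough from the basepoint cannot meet the compact support of $f$. Concretely, the Radon transform integrates $f$ over the horosphere $H_{\xi,\sigma}(s)=B_{\xi,\sigma}^{-1}(s)$, so it suffices to show that $H_{\xi,\sigma}(s)\cap B(\sigma,R)=\emptyset$ whenever $|s|>R$.

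First I would recall that for each $\xi\in\partial X$ the Busemann function $B_{\xi,\sigma}:X\to\R$ is defined as the limit of the $1$-Lipschitz functions $y\mapsto d(y,\gamma(t))-d(\sigma,\gamma(t))$, so it is itself $1$-Lipschitz with $B_{\xi,\sigma}(\sigma)=0$. Consequently
\begin{align*}
|B_{\xi,\sigma}(z)|=|B_{\xi,\sigma}(z)-B_{\xi,\sigma}(\sigma)|\leq d(z,\sigma)\quad\forall z\in X.
\end{align*}
In particular, every $z\in B(\sigma,R)$ satisfies $|B_{\xi,\sigma}(z)|\leq R$, so $B(\sigma,R)\subset B_{\xi,\sigma}^{-1}([-R,R])$.

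Now fix $\xi\in\partial X$ and $s\in\R$ with $|s|>R$. Any $z\in H_{\xi,\sigma}(s)$ satisfies $|B_{\xi,\sigma}(z)|=|s|>R$, hence $z\notin B(\sigma,R)\supset\operatorname{supp}f$. Therefore $f$ vanishes identically on $H_{\xi,\sigma}(s)$, and
\begin{align*}
\mathcal{R}_\sigma(f)(s,\xi)=e^{-\rho s}\int_{H_{\xi,\sigma}(s)}f(z)\,dz=0.
\end{align*}

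There is essentially no obstacle here: the only substantive ingredient is the $1$-Lipschitz property of the Busemann function, which is standard (see e.g.\ the reference \cite{rub.181283719860101} already cited in Section 2 for the $C^{1,1}$-regularity of $b_v$). Everything else is the definition of $\mathcal{R}_\sigma$ and of the horosphere as a level set of $B_{\xi,\sigma}$.
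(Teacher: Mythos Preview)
Your proof is correct and follows essentially the same approach as the paper: both use the $1$-Lipschitz property of the Busemann function to conclude that $\lvert B_{\xi,\sigma}(z)\rvert\leq d(\sigma,z)$, hence the horosphere $H_{\xi,\sigma}(s)$ lies outside $B(\sigma,R)$ whenever $\lvert s\rvert>R$, so $f$ vanishes there and the integral is zero. Your version is slightly more explicit in justifying the Lipschitz bound via the definition of $B_{\xi,\sigma}$ as a limit of $1$-Lipschitz functions, but the argument is otherwise identical.
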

\begin{proof}
Let $\lvert s\rvert >R$.
Since the Busemann function is Lipschitz with Lipschitz constant $1$ we have that 
$\lvert B_{\xi,\sigma}(x)\rvert $ is a lower bound of $d(\sigma,x)$. Hence for all $x\in H^{s}_{\xi,\sigma}$ we have that $d(\sigma,x)>R$ hence 
$f=0$ on $H^{s}_{\xi,\sigma}$ and therefore $$\mathcal{R}_{\sigma}(f)(s,\xi)=e^{-\rho s}\int_{H_{\xi,\sigma}(s)}f(z)\,dz=0$$
 for all $\xi\in \partial X.$ 
\end{proof}
\begin{bem}
Since the gradient of the Busemann function $B_{\xi,\sigma}$ in $\sigma\in X$ coincides up to a sing with the initial condition of the unique geodesic emitting from $\sigma$ and ending in $\xi$ the distance from $H^{s}_{\xi,\sigma}$ is given by $\lvert s\rvert$. 
\end{bem}
In the next lemma the choice of the factor $e^{-\rho s}$ will become apparent. A version without the factor can be found in \cite[Proposition 9]{Rouvire2021}.
\begin{lemma}\label{lemma:PW2}
Let $\mathcal{F}$ be the Euclidean Fourier transform  given for a smooth complex valued function $u$ on $\R$  with compact support by
$$\mathcal{F}(u)(\lambda)=\int_{-\infty}^{\infty} e^{-i\lambda t}u(t)\,dt\quad \lambda \in \C,$$
 then for $f\in C^{\infty}_c(X)$ we have:
$$ \tilde{f}^{\sigma}(\lambda,\xi)=\mathcal{F}\big(\mathcal{R}_{\sigma}(f)(\cdot,\xi)\big)(\lambda).$$
\end{lemma}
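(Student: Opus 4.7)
The plan is to carry out a direct computation that mirrors the proof of the analogous identity for the Abel transform on radial functions given earlier in the paper. The key observation is that the factor $e^{-\rho s}$ built into the definition of $\mathcal{R}_{\sigma}$ is precisely what produces the exponential factor $e^{(-i\lambda-\rho)B_{\xi,\sigma}(x)}$ appearing in the Fourier transform of Definition \ref{def:fourier}.

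First I would write out the right hand side using the definitions:
\begin{align*}
\mathcal{F}\bigl(\mathcal{R}_{\sigma}(f)(\cdot,\xi)\bigr)(\lambda)
&= \int_{-\infty}^{\infty} e^{-i\lambda s}\,\mathcal{R}_{\sigma}(f)(s,\xi)\,ds \\
&= \int_{-\infty}^{\infty} e^{-i\lambda s}\,e^{-\rho s}\int_{H^s_{\xi,\sigma}} f(z)\,dz\,ds.
\end{align*}
Since for $z \in H^s_{\xi,\sigma}$ we have $B_{\xi,\sigma}(z) = s$, the factor $e^{(-i\lambda-\rho)s}$ can be pulled inside the inner integral as $e^{(-i\lambda-\rho)B_{\xi,\sigma}(z)}$, giving
\begin{align*}
\mathcal{F}\bigl(\mathcal{R}_{\sigma}(f)(\cdot,\xi)\bigr)(\lambda)
&= \int_{-\infty}^{\infty}\int_{H^s_{\xi,\sigma}} e^{(-i\lambda-\rho)B_{\xi,\sigma}(z)}\,f(z)\,dz\,ds.
\end{align*}

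Next I would invoke the co-area formula applied to the function $B_{\xi,\sigma}: X \to \mathbb{R}$. The horospheres $H^s_{\xi,\sigma}$ are exactly the level sets of $B_{\xi,\sigma}$, and since $B_{\xi,\sigma}$ has unit gradient (where differentiable; by Busemann function regularity on harmonic manifolds the relevant identities hold) the Jacobian factor $\lVert\operatorname{grad} B_{\xi,\sigma}\rVert^{-1}_g$ is $1$. Applying the co-area theorem from Section 3 with $g(z) = f(z)e^{(-i\lambda-\rho)B_{\xi,\sigma}(z)} \in C^0_c(X)$ (compact support since $f$ does) collapses the double integral into a single integral over $X$:
\begin{align*}
\mathcal{F}\bigl(\mathcal{R}_{\sigma}(f)(\cdot,\xi)\bigr)(\lambda)
&= \int_X f(z)\,e^{(-i\lambda-\rho)B_{\xi,\sigma}(z)}\,dz
 = \tilde{f}^{\sigma}(\lambda,\xi),
\end{align*}
which is the claim.

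There is no real obstacle here; the only point requiring a line of justification is the application of the co-area formula, which is legitimate because $f$ has compact support and the Busemann function $B_{\xi,\sigma}$ has unit norm gradient almost everywhere on $X$. Equivalently, one could argue via the diffeomorphism $\Psi_{v,s}$ of (\ref{eq:diffeo1}) and the identity (\ref{eq:diffeo2}), exactly as was done in the proof of the Abel transform formula, to express the iterated integral as an integral over $\mathbb{R}\times H^0_{\xi,\sigma}$ and then back as an integral over $X$. Either route gives the result immediately.
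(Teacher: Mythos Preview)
Your argument is correct and is essentially the paper's own proof: both apply the co-area formula to the Busemann function $B_{\xi,\sigma}$ (unit gradient, level sets the horospheres $H^s_{\xi,\sigma}$) to pass between the integral over $X$ defining $\tilde{f}^{\sigma}$ and the iterated integral defining $\mathcal{F}\bigl(\mathcal{R}_{\sigma}(f)(\cdot,\xi)\bigr)$. The only cosmetic difference is that the paper starts from $\tilde{f}^{\sigma}(\lambda,\xi)$ and works toward the Euclidean Fourier transform of the Radon transform, whereas you run the chain in the opposite direction.
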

\begin{proof}
We have for $f\in C^{\infty}_c(X)$ using the Co-area formula:
\begin{align*}
\tilde{f}^{\sigma}(\lambda,\xi)&=\int_{X}f(x)e^{-(i\lambda+p)B_{\xi,\sigma}(x)}\,dx\\
&=\int_{-\infty}^{\infty}\int_{H_{s,\xi}}f(z)e^{-(i\lambda+p)s}\,dz\,ds\\
&=\int_{-\infty}^{\infty}e^{-i\lambda s}e^{-ps}\int_{H_{s,\xi}}f(z)\,dz,ds\\
&=\int_{-\infty}^{\infty}e^{-i\lambda s} \mathcal{R}_{\sigma}(f)(s,\xi)\,ds\\
&=\mathcal{F}(\mathcal{R}_{\sigma}(f)(s,\xi))(\lambda).
\end{align*}
Where we get the existence of the Euclidean Fourier transform above from Lemma \ref{lemma:PW3}.
\end{proof}
\begin{bem}
In \cite[Theorem 11]{Rouvire2021} Rouvi\`ere uses Lemma \ref{lemma:PW2} to prove a inversion formula for the Radon transform. The idea is to apply the inverse Fourier transform on $X$ to the the result of the lemma. 
\end{bem}

\begin{proof}[Proof of Theorem \ref{thm:PW2}]
First we note that $e^{(-i\lambda-\rho)B_{\xi,\sigma}(x)}$ is for all $x\in X$ holomorphic in $\lambda \in \C$ and since
\begin{align*}
\tilde{f}^{\sigma}(\lambda,\xi)=\int_X f(x)e^{(-i\lambda-\rho)B_{\xi,\sigma}(x)}\,dx,
\end{align*}
it is sufficient to show that 
\begin{align*}
\int_X \left\lvert f(x)e^{(-i\lambda-\rho)B_{\xi,\sigma}(x)}\right\rvert\,dx<\infty\quad\forall \lambda\in \C.
\end{align*}
But this is given by the fact that $f$ has compact support. Hence $\tilde{f}^{\sigma}(\lambda,\xi)$
is holomorphic in $\lambda\in \C$ for all $\xi\in \partial X$ by Morera's theorem. Now by Lemma \ref{lemma:PW1} $\mathcal{R}_{\sigma}(f)(s,\xi)$ is smooth in $s$ and by Lemma \ref{lemma:PW3} $\mathcal{R}_{\sigma}(f)(s,\xi)$ has support in $[-R,R]$. Furthermore by Lemma \ref{lemma:PW2} $$\tilde{f}^{\sigma}(\lambda,\xi)=\mathcal{F}\big(\mathcal{R}_{\sigma}(f)(s,\xi)\big)(\lambda).$$
 Hence by the classical Paley-Wieder theorem (see Theorem \ref{thm:classPW}) we have that for every $\xi\in \partial X$ and $N\in \N_{>0}$ there exists a constant $C_{N,\xi}>0$ such that
$$ \lvert  \tilde{f}^{\sigma}(\lambda,\xi)\rvert \leq C_{N,\xi} (1+\lvert \lambda \rvert)^{-N}e^{R\lvert \operatorname{Im} \lambda\rvert}\quad\forall \lambda\in \C.$$
Now $\partial X$ is compact and $\tilde{f}^{\sigma}(\lambda,\xi)$ is continuous in $\xi$, since the Busemann boundary and the geometric boundary coincide, hence there exists a $C_N>0$ such that for all $\xi\in \partial X$:
$$ \lvert  \tilde{f}^{\sigma}(\lambda,\xi)\rvert \leq C_{N} (1+\lvert \lambda \rvert)^{-N}e^{R\lvert \operatorname{Im} \lambda\rvert}\quad \forall\lambda\in\C.$$
This yields the claim.
\end{proof}
\begin{prop}\label{lemma:even}
Let $f\in C_c^{\infty}(X)$ then we have: 
\begin{align*}
\int_{\partial X} \tilde{f}^{\sigma}(-\lambda,\xi) e^{(-i\lambda -\rho)B_{\xi,\sigma}(x)}\,d\mu_{\sigma}(\xi)=\int_{\partial X} \tilde{f}^{\sigma}(\lambda,\xi) e^{(i\lambda -\rho)B_{\xi,\sigma}(x)}\,d\mu_{\sigma}(\xi).
\end{align*}
\end{prop}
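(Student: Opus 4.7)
The plan is to expand both sides using the definition of $\tilde f^\sigma$, swap the order of integration by Fubini, and show that both integrals coincide with $\int_X f(y)\varphi_{\lambda,x}(y)\,dy$, where the point $x\in X$ now plays the role of the base point for the radial eigenfunction.

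More concretely, I would start from the right hand side and write
\begin{align*}
\int_{\partial X}\tilde f^\sigma(\lambda,\xi)e^{(i\lambda-\rho)B_{\xi,\sigma}(x)}\,d\mu_\sigma(\xi)
=\int_X f(y)\int_{\partial X}e^{(-i\lambda-\rho)B_{\xi,\sigma}(y)}e^{(i\lambda-\rho)B_{\xi,\sigma}(x)}\,d\mu_\sigma(\xi)\,dy.
\end{align*}
The cocycle relation (\ref{coBuse}) gives $B_{\xi,\sigma}(y)=B_{\xi,x}(y)+B_{\xi,\sigma}(x)$, so the $\xi$-integrand factors as $e^{(-i\lambda-\rho)B_{\xi,x}(y)}\cdot e^{-2\rho B_{\xi,\sigma}(x)}$. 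The Radon--Nikodym formula (\ref{eq:RNdiv}), applied with $B_{\xi,x}(\sigma)=-B_{\xi,\sigma}(x)$, yields $e^{-2\rho B_{\xi,\sigma}(x)}\,d\mu_\sigma(\xi)=d\mu_x(\xi)$. Then the representation (\ref{radialeigen}) of $\varphi_{\lambda,x}$ as a Poisson-type integral based at $x$ identifies the inner integral as $\varphi_{-\lambda,x}(y)$.

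Performing the same manipulations on the left hand side (now with $\lambda$ replaced by $-\lambda$ in the Fourier transform and the exponential carrying the opposite sign) produces $\varphi_{\lambda,x}(y)$ instead. The identity then reduces to the statement $\varphi_{\lambda,x}(y)=\varphi_{-\lambda,x}(y)$, which is immediate from the defining ODE $L_A\varphi_\lambda=-(\lambda^2+\rho^2)\varphi_\lambda$ together with $\varphi_\lambda(0)=1$: both sides solve the same Cauchy problem for the radial part of the Laplacian.

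There is no real obstacle here; the only thing to be a bit careful about is the bookkeeping of signs in the cocycle relation and the corresponding change of measure $d\mu_\sigma\leftrightarrow d\mu_x$, and the justification of Fubini, which is immediate since $f\in C_c^\infty(X)$ and $\partial X$ is compact so the integrand is bounded on the support of $f$.
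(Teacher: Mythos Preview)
Your proof is correct and follows essentially the same route as the paper: both arguments reduce each side to $\int_X f(y)\,\varphi_{\pm\lambda,x}(y)\,dy$ via the cocycle relation (\ref{coBuse}), the Radon--Nikodym identity (\ref{eq:RNdiv}), and the integral representation (\ref{radialeigen}), and then conclude from $\varphi_{\lambda,x}=\varphi_{-\lambda,x}$. The only cosmetic difference is that the paper first packages the computation as a lemma identifying the left-hand side with the convolution $f*\varphi_{\lambda,\sigma}(x)$ by passing through $\tilde f^{x}$ and (\ref{eq:fourpoint}), whereas you expand $\tilde f^{\sigma}$ directly and apply the base-change identities to the integrand.
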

The proof follows from the following lemma with the relation $$\varphi_{-\lambda,\sigma}=\varphi_{\lambda,\sigma}.$$

\begin{lemma}
Let $f\in C_c^{\infty}(X)$ then we have: 
\begin{align*}
f*\varphi_{\lambda,\sigma}(x):&=\int_X f(y)\cdot\varphi_{\lambda,x}(y)\,dy\\
&=\int_{\partial X} \tilde{f}^{\sigma}(-\lambda,\xi)\cdot e^{(-i\lambda -\rho)B_{\xi,\sigma}(x)}\,d\mu_{\sigma}(\xi).
\end{align*}
\end{lemma}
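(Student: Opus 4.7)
The plan is to substitute the boundary integral representation of the shifted spherical function given in (\ref{radialeigen}), interchange the order of integration by Fubini, and simplify using the cocycle property (\ref{coBuse}) together with the Radon--Nikodym identity (\ref{eq:RNdiv}) for the family of measures $\{\mu_x\}_{x\in X}$. No deep analytic tools are needed; the entire content is the correct bookkeeping of the Poisson-kernel-type weight that relates $\mu_x$ and $\mu_\sigma$.

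First I would apply (\ref{radialeigen}) with base point $x$ in place of $\sigma$ to write
\begin{align*}
\varphi_{\lambda,x}(y)=\int_{\partial X}e^{(i\lambda-\rho)B_{\xi,x}(y)}\,d\mu_x(\xi),
\end{align*}
insert this into $\int_X f(y)\varphi_{\lambda,x}(y)\,dy$, and swap the order of integration. This is immediately justified by Fubini's theorem since $f\in C^\infty_c(X)$ and $\partial X$ is compact, so the integrand is bounded and compactly supported in $y$.

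Next I would use the cocycle identity (\ref{coBuse}), which gives $B_{\xi,x}(y)=B_{\xi,\sigma}(y)-B_{\xi,\sigma}(x)$, to factor
\begin{align*}
e^{(i\lambda-\rho)B_{\xi,x}(y)}=e^{(i\lambda-\rho)B_{\xi,\sigma}(y)}\cdot e^{-(i\lambda-\rho)B_{\xi,\sigma}(x)}.
\end{align*}
Setting $y=\sigma$ in the cocycle (and using $B_{\xi,x}(x)=0$) yields $B_{\xi,x}(\sigma)=-B_{\xi,\sigma}(x)$, so the Radon--Nikodym relation (\ref{eq:RNdiv}) becomes $d\mu_x(\xi)=e^{-2\rho B_{\xi,\sigma}(x)}d\mu_\sigma(\xi)$. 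Combining this with the factor above, the total exponent in $B_{\xi,\sigma}(x)$ collapses to $-(i\lambda-\rho)-2\rho=-(i\lambda+\rho)$, leaving exactly $e^{(-i\lambda-\rho)B_{\xi,\sigma}(x)}$ outside the $y$-integral.

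Finally I would identify the remaining integral in $y$ as the Fourier transform evaluated at $-\lambda$: by Definition~\ref{def:fourier},
\begin{align*}
\int_X f(y)\,e^{(i\lambda-\rho)B_{\xi,\sigma}(y)}\,dy=\tilde f^{\sigma}(-\lambda,\xi),
\end{align*}
which produces the stated formula. The only real obstacle is keeping the signs straight in the cocycle and the Poisson weight simultaneously; once the identity $B_{\xi,x}(\sigma)=-B_{\xi,\sigma}(x)$ is recorded, all exponents combine cleanly and the computation is mechanical.
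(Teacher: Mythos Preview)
Your proposal is correct and follows essentially the same route as the paper: insert the integral representation (\ref{radialeigen}) of $\varphi_{\lambda,x}$, swap the integrals by Fubini, and rebase from $\mu_x$ to $\mu_\sigma$ via the cocycle (\ref{coBuse}) together with the Radon--Nikodym relation (\ref{eq:RNdiv}). The only cosmetic difference is that the paper first recognises the inner $y$-integral as $\tilde f^{x}(-\lambda,\xi)$ and then invokes the basepoint-change identity (\ref{eq:fourpoint}), whereas you apply the cocycle directly to the exponent before reading off $\tilde f^{\sigma}(-\lambda,\xi)$; these are the same computation in a different order.
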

\begin{proof}
Recall the relations (\ref{coBuse}), (\ref{eq:RNdiv}), (\ref{eq:fourpoint}) and (\ref{radialeigen}). Then we obtain for $x,\sigma \in X$:

\begin{align*}
f*\varphi_{\lambda,\sigma}(x)&=\int_X f(y)\cdot\varphi_{\lambda,x}(y)\,dy\\
&\overset{\text{(\ref{radialeigen})}}{=}\int_X f(y)\cdot \int_{\partial X} e^{(i\lambda -\rho)B_{\xi,x}(y)}\,d\mu_x(\xi)\,dy\\
&=\int_X \int_{\partial X} f(y)e^{(i\lambda -\rho)B_{\xi,x}(y)}\,d\mu_x(\xi)\,dy\\
&= \int_{\partial X}\int_X f(y)e^{(i\lambda -\rho)B_{\xi,x}(y)}\,dy\,d\mu_x(\xi)\\
&=\int_{\partial X}\tilde{f}^{x}(-\lambda,\xi)\,d\mu_{x}(\xi)\\
&\overset{\text{(\ref{eq:fourpoint})}}{=}\int_{\partial X}\tilde{f}^{\sigma}(-\lambda,\xi)\cdot e^{(-i\lambda+\rho)B_{\xi,\sigma}(x)}   \,d\mu_{x}(\xi)\\
&\overset{\text{(\ref{eq:RNdiv})}}{=}\int_{\partial X}\tilde{f}^{\sigma}(-\lambda,\xi)\cdot e^{(-i\lambda+\rho)B_{\xi,\sigma}(x)} e^{-2\rho B_{\xi,\sigma}(x)}  \,d\mu_{\sigma}(\xi)\\
&\overset{\text{(\ref{coBuse})}}{=}\int_{\partial X} \tilde{f}^{\sigma}(-\lambda,\xi)\cdot e^{(-i\lambda -\rho)B_{\xi,\sigma}(x)}\,d\mu_{\sigma}(\xi).
\end{align*}
The interchange of integrals is justified by the Fubini-Tonelli  theorem and the facts that $f$ has compact support and $\partial X$ has finite measure ($d\mu_{\sigma}(\xi)$ is a probability measure). 
\end{proof}
\begin{folg}\label{folg:PW}
Let $R>0$ and denote by $PW^0_R$ all functions $F:\C\times \partial X\to \C$ holomorphic on $\C$ which satisfy 
\begin{align*}
\sup_{\lambda\in \C,\,\xi\in\partial X} e^{-R\lvert\operatorname{Im}( \lambda) \rvert} (1+\lvert \lambda\rvert)^{N} \lvert F(\lambda,\xi)\rvert<\infty \quad \forall N\in \N_{>0}.
\end{align*}
and for $\sigma\in X$:
$$\int_{\partial X} F(-\lambda,\xi)\cdot e^{(-i\lambda -\rho)B_{\xi,\sigma}(x)}\,d\mu_{\sigma}(\xi)=\int_{\partial X} F(\lambda,\xi)\cdot e^{(i\lambda -\rho)B_{\xi,\sigma}(x)}\,d\mu_{\sigma}(\xi),$$
then the image of $C^{\infty}_c(X)$ under the Fourier transform based at $\sigma$ is contained in 
$$\bigcup_{R\geq 0} PW^0_R.$$
\end{folg}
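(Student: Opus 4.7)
The plan is to observe that the corollary is essentially a repackaging of Theorem \ref{thm:PW2} and Proposition \ref{lemma:even}, so no new analytic work is required. Given $f\in C_c^{\infty}(X)$, I would first fix the basepoint $\sigma\in X$ and choose $R>0$ large enough so that the support of $f$ is contained in the closed geodesic ball $B(\sigma,R)$; such an $R$ exists by compactness of the support.

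Next I would invoke Theorem \ref{thm:PW2} directly: this gives both the holomorphy of $\lambda\mapsto \tilde f^{\sigma}(\lambda,\xi)$ on $\C$ (uniformly in $\xi\in\partial X$) and the Paley--Wiener type estimate
\[
\sup_{\lambda\in\C,\,\xi\in\partial X} e^{-R\lvert\operatorname{Im}\lambda\rvert}(1+\lvert\lambda\rvert)^N\lvert \tilde f^{\sigma}(\lambda,\xi)\rvert <\infty\quad\forall N\in\N_{>0}.
\]
These are precisely the growth and holomorphy requirements in the definition of $PW^0_R$.

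It then remains to verify the integral symmetry condition in the definition of $PW^0_R$, and this is exactly the content of Proposition \ref{lemma:even} applied to $F=\tilde f^{\sigma}$: for every $x\in X$ we have
\[
\int_{\partial X}\tilde f^{\sigma}(-\lambda,\xi)\,e^{(-i\lambda-\rho)B_{\xi,\sigma}(x)}\,d\mu_{\sigma}(\xi)
=\int_{\partial X}\tilde f^{\sigma}(\lambda,\xi)\,e^{(i\lambda-\rho)B_{\xi,\sigma}(x)}\,d\mu_{\sigma}(\xi).
\]
Combining these three facts shows $\tilde f^{\sigma}\in PW^0_R\subset \bigcup_{R'\geq 0} PW^0_{R'}$, which is the claim.

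Since all the substantive work has already been done in the preceding theorem and proposition, there is no real obstacle here; the only item worth being careful about is making the choice of $R$ depend on $f$ (so the union over $R\geq 0$ genuinely absorbs every compactly supported smooth function), and checking that the symmetry identity in Proposition \ref{lemma:even}, which was stated for a fixed $\sigma$, is indeed the same condition as the one appearing in the definition of $PW^0_R$ for that same basepoint.
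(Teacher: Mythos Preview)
Your proposal is correct and is exactly the approach the paper intends: the corollary is stated without proof precisely because it follows immediately by combining Theorem~\ref{thm:PW2} (holomorphy and the Paley--Wiener estimate) with Proposition~\ref{lemma:even} (the symmetry identity), after choosing $R$ so that $\operatorname{supp} f\subset B(\sigma,R)$.
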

\section{Huyghens' principle}
In this section we want to prove an asymptotic Huyghens' principle along the lines of the proof of \cite{Branson1995}. For this we need to make assumptions on the $\mathbf{c}$-function, namely we need that the function $\eta$ defined by $\eta(\lambda)^{-1}:=\mathbf{c}(\lambda)\overline{\mathbf{c}(\lambda)}$ on the lower have plane of $\C$ has a holomorphic extension up to $\operatorname{Im}(\lambda)=\epsilon_{max}>0$ where it has a singular pole and is a polynomial with real coefficients up to this point such that $\eta(\lambda)=\lambda^{n-1}\eta_0(\lambda)$ where all poles of $\eta$ are also poles of $\eta_0$ with the same multiplicity. This condition is satisfied in the case of symmetric spaces of rank one and Damek-Ricci spaces whose nilpotent part has a centre of even dimension as well as on the hyperbolic spaces of odd dimension. For this see \cite{AMBP_2005__12_1_147_0}. For more detail on the $\mathbf{c}$-function of Damek-Ricci space see \cite{varadarajan2006harmonic}, especially proposition 4.7.13-4.7.15 and theorem 6.3.4.
\begin{bem}\label{bem:ceven}
Note that $\eta(\lambda)=\lvert \mathbf{c}(\lambda)\rvert^{-2}$ and that by \cite[Lemma 3.4 and Proposition 3.17]{Bloom1995TheHM} (alternatively one can observe this from (\ref{eq:eigenlaplace}) combined with (\ref{eq:symeigen}) and  (\ref{eigende})) we have:
\begin{align*}
\mathbf{c}(\lambda)=\overline{\mathbf{c}(-\lambda)}\quad\forall \lambda\in \R.
\end{align*}
From this we get that for all $\lambda\in \R$
\begin{align*}
\eta(-\lambda)=(\mathbf{c}(-\lambda)\overline{\mathbf{c}(-\lambda)})^{-1}=(\overline{\mathbf{c}(\lambda)}\mathbf{c}(\lambda))^{-1}=\eta(\lambda)
\end{align*}
hence $\eta$ is even in $\lambda$.
\end{bem}
\begin{satz}\label{thm:Huy}
Let $(X,g)$ be a non compact simply connected harmonic manifold of rank one of dimension bigger then one, such that the $\mathbf{c}$-function satisfies the condition above. And let $\varphi$ be a solution of the shifted wave equation with initial conditions $f,g$ supported in a ball of radius $R$ around $\sigma\in X$. Let $\epsilon_{max}$ be as above and $0<\epsilon<\epsilon_{max}<\infty$ then there is a constant $C>0$ such that
$$\lvert \varphi(x,t)\rvert \leq C(\epsilon_{max}-\epsilon)^{-1}\cdot e^{-\epsilon(\lvert t\rvert-d(x,\sigma)-R)}\quad \forall (x,t)\in X\times\R$$
and if $\epsilon_{max}=\infty$ we get
$$\lvert \varphi(x,t)\rvert \leq C\cdot e^{-\epsilon(\lvert t\rvert-d(x,\sigma)-R)}\quad\forall \epsilon>0, \forall (x,t)\in X\times\R$$
therefore we get:
$$\varphi(x,t)=0\quad\text{for } \lvert t \rvert -d(x,\sigma)\geq R.$$
\end{satz}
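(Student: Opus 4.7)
The plan is to start from the Fourier representation of $\varphi$ given in Theorem \ref{thm:wavefour} and to shift the $\lambda$-contour of integration into the complex plane, where the holomorphy of $\eta$ together with the Paley--Wiener decay of $\tilde{f}^\sigma,\tilde{g}^\sigma$ (Theorem \ref{thm:PW2}) lets one trade the oscillatory factor $e^{i\lambda t}$ for exponential decay in $t$. Combining the evenness of $\eta$ (Remark \ref{bem:ceven}), the evenness in $\lambda$ of $\cos(\lambda t)$ and $\sin(\lambda t)/\lambda$, and Proposition \ref{lemma:even}, one first extends the $\lambda$-integration in Theorem \ref{thm:wavefour} from $(0,\infty)$ to $\R$ and replaces $\cos(\lambda t)$ by $e^{i\lambda t}$ via the identity $\int_{\R}H(\lambda)\cos(\lambda t)\,d\lambda=\int_{\R}H(\lambda)e^{i\lambda t}\,d\lambda$ valid for even $H$. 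The hypothesis $\eta(\lambda)=\lambda^{n-1}\eta_0(\lambda)$ with $n=\dim X\geq 2$ lets one rewrite $\eta(\lambda)\sin(\lambda t)/\lambda$ as $\lambda^{n-2}\eta_0(\lambda)\sin(\lambda t)$, which is regular at $\lambda=0$; splitting $\sin(\lambda t)=(e^{i\lambda t}-e^{-i\lambda t})/(2i)$ then presents $\varphi(x,t)$ as a sum of integrals whose oscillatory factors are pure $e^{\pm i\lambda t}$.

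Assume now $t>0$ (the case $t<0$ is handled by reversing the directions of all contour shifts). By Cauchy's theorem applied in the strip $\{|\operatorname{Im}\lambda|<\epsilon_{max}\}$, the $e^{i\lambda t}$-contours are moved upward to $\operatorname{Im}\lambda=\epsilon$ and the $e^{-i\lambda t}$-contours downward to $\operatorname{Im}\lambda=-\epsilon$ for any $0<\epsilon<\epsilon_{max}$; the contributions from the vertical segments at $\pm\infty$ vanish because the super-polynomial decay in $|\operatorname{Re}\lambda|$ granted by Theorem \ref{thm:PW2} dominates the polynomial growth of $\eta$. On the shifted line $\lambda=\mu+i\epsilon$, applying Fubini to integrate over $\partial X$ first converts the inner integral into a spherical function via (\ref{radialeigen}):
$$\int_{\partial X}e^{(i\lambda-\rho)B_{\xi,\sigma}(x)}\,d\mu_\sigma(\xi)=\varphi_{\lambda,\sigma}(x),$$
and the Harish-Chandra-type expansion (\ref{eigende})--(\ref{eq:symeigen}) gives $|\varphi_{\mu+i\epsilon,\sigma}(x)|\leq C e^{(\epsilon-\rho)d(\sigma,x)}\leq Ce^{\epsilon d(\sigma,x)}$. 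Combined with $|e^{i\lambda t}|=e^{-\epsilon t}$, $|\tilde{h}^\sigma(\mu+i\epsilon,\xi)|\leq C_N(1+|\mu|)^{-N}e^{R\epsilon}$ uniformly in $\xi\in\partial X$ (Theorem \ref{thm:PW2}, for $h\in\{f,g\}$), and the bound $|\eta(\mu+i\epsilon)|\leq C(1+|\mu|)^{n-1}(\epsilon_{max}-\epsilon)^{-1}$ arising from the simple pole at $i\epsilon_{max}$, choosing $N>n$ makes the $\mu$-integral converge and collecting factors yields
$$|\varphi(x,t)|\leq C(\epsilon_{max}-\epsilon)^{-1}e^{-\epsilon t}e^{\epsilon d(\sigma,x)}e^{R\epsilon}=C(\epsilon_{max}-\epsilon)^{-1}e^{-\epsilon(t-d(\sigma,x)-R)}.$$
When $\epsilon_{max}=\infty$ no pole is present, the constant is uniform in $\epsilon$, and in the regime $t>d(\sigma,x)+R$ letting $\epsilon\to\infty$ forces $\varphi(x,t)=0$.

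The main technical point is the rigorous treatment of the $\sin(\lambda t)/\lambda$-term: a naive splitting $\sin(\lambda t)/\lambda=(e^{i\lambda t}-e^{-i\lambda t})/(2i\lambda)$ introduces a simple pole at $\lambda=0$ in each half, and only the combined integrand is pole-free. The factorization $\eta=\lambda^{n-1}\eta_0$ with $\eta_0$ sharing the poles of $\eta$ absorbs the offending $1/\lambda$ factor; this is precisely where the hypothesis $\dim X\geq 2$ is used. A subsidiary delicate step is verifying that the spherical-function bound on $\varphi_{\mu+i\epsilon,\sigma}(x)$ is uniform in $\mu\in\R$ and locally uniform in $\epsilon\in(0,\epsilon_{max})$, so that the $(\epsilon_{max}-\epsilon)^{-1}$ blow-up of the final constant stems purely from the pole of $\eta$ and not from the spherical-function estimate.
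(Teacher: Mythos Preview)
Your proposal is correct and follows essentially the same route as the paper: both start from Theorem~\ref{thm:wavefour}, use the evenness of $\eta$ together with Proposition~\ref{lemma:even} to pass to an integral over $\R$ with pure exponentials $e^{i\lambda t}$, absorb the $1/\lambda$ from the $\sin$-term via $\eta(\lambda)=\lambda^{n-1}\eta_0(\lambda)$ (which is where $\dim X\ge 2$ enters), shift the contour into the strip, and combine the Paley--Wiener bound (Theorem~\ref{thm:PW2}) with a spherical-function estimate. The paper organises this through the auxiliary functions $F(\lambda,x)$, $G(\lambda,x)$ of Lemma~\ref{lemma:Huy1} and the bounds of Lemma~\ref{lemma:Huy2}, and uses evenness to reduce to a single $e^{i\lambda t}$ integral rather than shifting $e^{\pm i\lambda t}$ in opposite directions, but this is only a packaging difference.

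One point to tighten: the uniform bound $|\varphi_{\mu+i\epsilon,\sigma}(x)|\le C e^{\epsilon d(\sigma,x)}$ does not follow from the asymptotic expansion (\ref{eigende})--(\ref{eq:symeigen}) alone, since those control $\Phi_{\pm\lambda}$ only for large $r$ and say nothing about uniformity in $\mu$. The paper invokes instead Lemma~\ref{lemma:bounds}(1), which gives $|\varphi_{\lambda,\sigma}(x)|\le\varphi_{0,\sigma}(x)\,e^{|\operatorname{Im}\lambda|\,d(\sigma,x)}\le e^{|\operatorname{Im}\lambda|\,d(\sigma,x)}$ globally and uniformly; you correctly flagged this as a delicate step, and Lemma~\ref{lemma:bounds} is the ingredient that closes it.
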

The proof of this statement will be conducted via a series of lemma occupying the remainder of the section. We will always require the  assumptions of the theorem. 
\begin{lemma}\label{lemma:contur}
Let $h:\C\to\C$ be a function holomorphic on the stripe $P=\{z\in \C\mid 0\leq\operatorname{Im} z \leq \epsilon\}$ such that there is a $C>0$ with $\lvert h(z)\rvert \leq C(1+\lvert z\rvert)^{-N}$ for some $N>0$ on $P$. Then:
\begin{align*}
\int_{-\infty}^{\infty}h(z)\,dz=\int_{-\infty}^{\infty} h(a+i\epsilon)\,da.
\end{align*}
\end{lemma}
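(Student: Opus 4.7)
The statement is the standard contour-shifting lemma, so the plan is to apply Cauchy's theorem to a rectangle and let its horizontal length go to infinity. Concretely, for $R>0$ I would consider the closed rectangular contour $\Gamma_R$ with vertices $-R,\ R,\ R+i\epsilon,\ -R+i\epsilon$, oriented counterclockwise, lying entirely in the strip $P$. Since $h$ is holomorphic on an open neighbourhood of $P$ (or at least on $P$; one can either assume holomorphy on a slightly larger open set or invoke a version of Cauchy's theorem on a closed strip where $h$ is continuous up to the boundary and holomorphic in the interior), Cauchy's theorem gives $\oint_{\Gamma_R} h(z)\,dz = 0$.

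Next I would split $\oint_{\Gamma_R}$ into its four edges: the bottom $B_R$ parametrised by $t\mapsto t$ for $t\in[-R,R]$, the right vertical $V_R^+$ parametrised by $s\mapsto R+is$ for $s\in[0,\epsilon]$, the top $T_R$ parametrised by $t\mapsto -t+i\epsilon$ for $t\in[-R,R]$, and the left vertical $V_R^-$ parametrised by $s\mapsto -R+i(\epsilon-s)$ for $s\in[0,\epsilon]$. The main observation is that on $V_R^\pm$ we have $|z|\geq R$, hence by hypothesis
\begin{align*}
\left|\int_{V_R^\pm} h(z)\,dz\right|\leq \epsilon\cdot C(1+R)^{-N}\xrightarrow{R\to\infty} 0,
\end{align*}
since $N>0$. (Note that for the integrals in the conclusion to be absolutely convergent as improper integrals over $\mathbb{R}$, one implicitly needs $N>1$; this is clearly the intended regime and is what will actually be used when applying the lemma.)

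Letting $R\to\infty$ in $\oint_{\Gamma_R}h = 0$ and using the orientation of $T_R$ (which traverses the horizontal line $\operatorname{Im}z=\epsilon$ from right to left) yields
\begin{align*}
\int_{-\infty}^{\infty}h(t)\,dt\ -\ \int_{-\infty}^{\infty}h(a+i\epsilon)\,da\ =\ 0,
\end{align*}
which is the claimed identity. The only thing that requires a small amount of care is checking the decay hypothesis on the vertical segments, but since the length of each vertical segment is the fixed constant $\epsilon$, the decay of $h$ in the real direction of $z$ handles this immediately; there is no genuine obstacle here. The lemma is purely a housekeeping step preparing for the contour-shift argument that drives the proof of Theorem \ref{thm:Huy}, where $h$ will be built from the Fourier-side representation of $\varphi$ and the hypothesised holomorphic extension of $\eta=|\mathbf{c}|^{-2}$ into the strip $0\leq\operatorname{Im}\lambda\leq\epsilon_{\max}$.
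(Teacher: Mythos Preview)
Your proof is correct and follows essentially the same approach as the paper: apply Cauchy's theorem to the rectangle with vertices $\pm R$, $\pm R+i\epsilon$, bound the integrals over the vertical segments by $\epsilon\cdot C(1+R)^{-N}\to 0$, and let $R\to\infty$. Your added remark that absolute convergence of the horizontal integrals really requires $N>1$ is a valid observation the paper leaves implicit.
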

\begin{proof}
Consider the contour in Figure \ref{fig2}.
Let $\gamma_1:[0,1]\to\C$ be given by $\gamma_1(s)=r+is\epsilon$ and $\gamma_2:[0,1]\to\C$ be given by $\gamma_2(s)=-r+i(1-s)\epsilon$ then by the bounds on $h$ on the stripe $P$ there are constants $C_1,C_2>0$ such that:
\begin{align*}
\Big\lvert \int_{\gamma_1}h\,ds\Big\rvert=\Big\lvert \int_0^1h(r+is\epsilon)\cdot i\theta\,ds\Big\rvert\leq C_1(1+\lvert r\rvert)^{-N}\\
\Big\lvert \int_{\gamma_2}h\,ds\Big\rvert=\Big\lvert \int_0^1h(-r+(1-is)\epsilon)\cdot -i\theta\,ds\Big\rvert\leq C_2(1+\lvert r\rvert)^{-N}.\\
\end{align*}
Therefore since both integrals tend to zero for $r\to \pm \infty$ and we get the assertion. 
\end{proof}
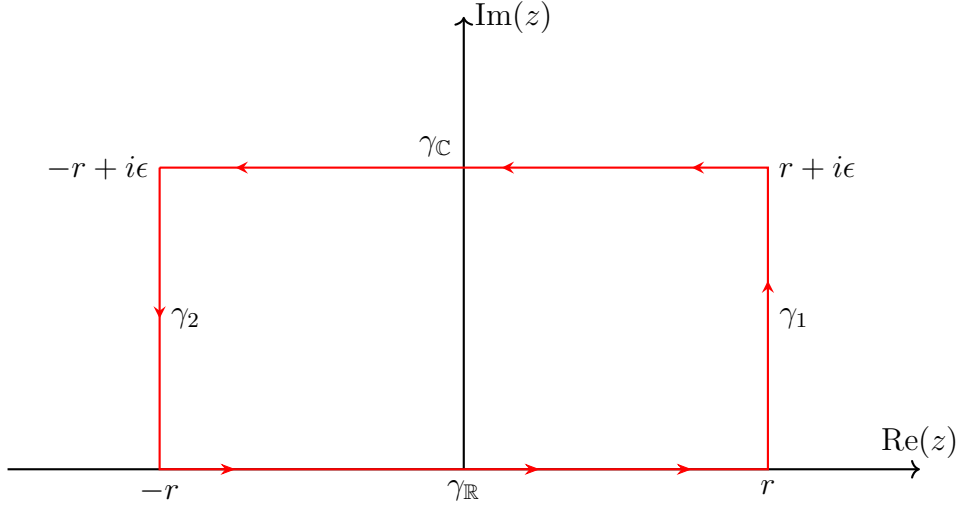
\begin{figure}[ht]
\caption{Contour of Lemma \ref{lemma:contur}, for $r\to\infty$ the integral along $\gamma_1$ and $\gamma_2$ vanishes because of the bounds on $h$.}\label{fig2}
\begin{tikzpicture}[decoration={markings,
    mark=at position 1cm   with {\arrowreversed[line width=1pt]{stealth}},
    mark=at position 4.5cm with {\arrowreversed[line width=1pt]{stealth}},
    mark=at position 7cm   with {\arrowreversed[line width=1pt]{stealth}},
    mark=at position 9.5cm with {\arrowreversed[line width=1pt]{stealth}},
    mark=at position 22cm   with {\arrowreversed[line width=1pt]{stealth}},
  }]
  \draw[thick, ->] (-6,0) -- (6,0) coordinate (xaxis);

  \draw[thick, ->] (0,0) -- (0,6) coordinate (yaxis);

  \node[above] at (xaxis) {$\mathrm{Re}(z)$};

  \node[right]  at (yaxis) {$\mathrm{Im}(z)$};
   \node [below] at (-4,0) {$-r$};
   \node [below] at (4,0) {$r$};
    \node [right] at (4,4) {$r+i\epsilon$};
    \node [left] at (-4,4) {$-r+i\epsilon$};
    \draw [red,-{Stealth[length=2mm, width=1.5mm]}](0,0) -- (1,0);
    \draw [red,-{Stealth[length=2mm, width=1.5mm]}](2,0) -- (3,0);
    \draw [red,-{Stealth[length=2mm, width=1.5mm]}](-4,0) -- (-3,0);
  \path[draw,red, line width=0.8pt, postaction=decorate] (-4,4)
    -- node[midway, above left, black] {$\gamma_{\C}$} (4,4)
    -- node[midway, right, black] {$\gamma_1$}(4,0)
    -- node[midway, below, black] {$\gamma_{\R}$} (-4,0)
    -- node[midway, right, black] {$\gamma_2$}(-4,4);
\end{tikzpicture}
\end{figure}


\begin{lemma}\label{lemma:Huy1}
Let $f,g\in C^{\infty}_c(X)$ then the functions
\begin{align*}
F(\lambda,x)&:=\int_{\partial X} \tilde{f}^{\sigma}(\lambda,\xi)e^{(i\lambda -\rho)B_{\xi,\sigma}(x)}\eta(\lambda)\,d\mu_{\sigma}(\xi)\\
&\text{ and}\\
G(\lambda,x)&:=\int_{\partial X} \tilde{g}^{\sigma}(\lambda,\xi)e^{(i\lambda -\rho)B_{\xi,\sigma}(x)}\eta(\lambda)\,d\mu_{\sigma}(\xi)
\end{align*}
are even in $\lambda$ and 
\begin{align*}
\int_0^{\infty}&F(\lambda,x)\cos(\lambda t)+G(\lambda,\xi)\frac{\sin(\lambda t)}{\lambda}\,d\lambda\\
&=\frac{1}{2}\int_{-\infty}^{\infty}\big(F(\lambda,x)+\frac{G(\lambda,x)}{i\lambda}\big )e^{i\lambda t}\,d\lambda.
\end{align*}
\end{lemma}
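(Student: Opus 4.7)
The plan is to deduce the evenness of $F$ and $G$ directly from Proposition \ref{lemma:even} together with Remark \ref{bem:ceven}, and then to obtain the integral identity by expressing $\cos(\lambda t)$ and $\sin(\lambda t)/\lambda$ as sums of complex exponentials and using the evenness to fold the integral over $[0,\infty)$ into an integral over $\R$.

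\textbf{Evenness.} First I would factor $F(\lambda,x)=\eta(\lambda)\,\tilde F(\lambda,x)$ with
$$\tilde F(\lambda,x):=\int_{\partial X}\tilde f^{\sigma}(\lambda,\xi)\,e^{(i\lambda-\rho)B_{\xi,\sigma}(x)}\,d\mu_{\sigma}(\xi),$$
so that the left- and right-hand sides of the identity in Proposition \ref{lemma:even} are precisely $\tilde F(-\lambda,x)$ and $\tilde F(\lambda,x)$. Hence $\tilde F$ is even in $\lambda$. Since $\eta$ is even on $\R$ by Remark \ref{bem:ceven} and holomorphic in a neighbourhood of $\R$, its evenness extends to that neighbourhood by analytic continuation. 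Consequently $F$ is even in $\lambda$, and the analogous argument (with $g$ in place of $f$) delivers the evenness of $G$.

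\textbf{The integral identity.} Evenness of $F(\cdot,x)$ yields
$$\int_0^{\infty}F(\lambda,x)\cos(\lambda t)\,d\lambda=\tfrac12\int_{-\infty}^{\infty}F(\lambda,x)\cos(\lambda t)\,d\lambda;$$
writing $\cos(\lambda t)=(e^{i\lambda t}+e^{-i\lambda t})/2$ and folding the $e^{-i\lambda t}$ term via the substitution $\lambda\mapsto -\lambda$ (permitted by the evenness of $F$) shows this equals $\tfrac12\int_{-\infty}^{\infty}F(\lambda,x)e^{i\lambda t}\,d\lambda$. Similarly $\sin(\lambda t)/\lambda$ is even in $\lambda$, so $G(\lambda,x)\sin(\lambda t)/\lambda$ is even and
$$\int_0^{\infty}G(\lambda,x)\frac{\sin(\lambda t)}{\lambda}\,d\lambda=\tfrac12\int_{-\infty}^{\infty}G(\lambda,x)\frac{\sin(\lambda t)}{\lambda}\,d\lambda;$$
expanding $\sin(\lambda t)=(e^{i\lambda t}-e^{-i\lambda t})/(2i)$ and again folding one of the two terms (now picking up a sign change from $1/\lambda$, which cancels against the one coming from the substitution) produces $\tfrac12\int_{-\infty}^{\infty}\frac{G(\lambda,x)}{i\lambda}e^{i\lambda t}\,d\lambda$. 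Adding the two contributions delivers the asserted formula.

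\textbf{The main subtlety.} The only delicate point is the factor $1/\lambda$ appearing in $G(\lambda,x)/(i\lambda)$, which is a priori singular at $\lambda=0$. Under the standing assumption of Section 9 that $\eta(\lambda)=\lambda^{n-1}\eta_0(\lambda)$ with $n=\dim X\geq 2$, however, the function $G(\cdot,x)$ vanishes at $\lambda=0$ to order at least $n-1\geq 1$, so $G(\lambda,x)/\lambda$ extends smoothly across $\lambda=0$ and the integral on the right-hand side is an ordinary Lebesgue integral (no principal value is needed). Convergence at infinity on $\R$ is supplied by the rapid decay of $\tilde f^{\sigma}$ and $\tilde g^{\sigma}$ from Theorem \ref{thm:PW2} together with the polynomial growth of $\eta$. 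I expect this vanishing-order bookkeeping to be the only spot where the assumptions of Section 9 genuinely enter; everything else reduces to routine symmetry manipulations with complex exponentials.
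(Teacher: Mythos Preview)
Your proposal is correct and follows essentially the same route as the paper: evenness of $F$ and $G$ via Proposition \ref{lemma:even} and Remark \ref{bem:ceven}, then the standard exponential decomposition of $\cos$ and $\sin$ combined with the substitution $\lambda\mapsto-\lambda$. Your additional remarks on the removability of the $1/\lambda$ singularity at $0$ and on convergence at infinity go slightly beyond what the paper spells out but are in the same spirit.
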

\begin{proof}
Since $\eta$ ,by Remark \ref{bem:ceven}, is even in $\lambda$ and by Proposition \ref{lemma:even} $F(\lambda,x)$ and $G(\lambda,x)$ are even in $\lambda$. 
Now using this and $$2\cos(\lambda t)=e^{i\lambda t}+e^{-i \lambda t}$$ we get:
\begin{align*}
\int_0^{\infty} F(\lambda ,x)\cos(\lambda t)\,d\lambda&=\frac{1}{2}\Big(\int_0^{\infty}F(\lambda,x)e^{i\lambda t}\,d\lambda +\int_0^{\infty}F(\lambda,x)e^{-i\lambda t}\,d\lambda\Big)\\
&=\frac{1}{2}\Big(\int_0^{\infty}F(\lambda,x)e^{i\lambda t}\,d\lambda +\int_{-\infty}^{0}F(\lambda,x)e^{i\lambda t}\,d\lambda\Big)\\
&=\frac{1}{2}\int_{-\infty}^{\infty}F(\lambda,x)e^{i\lambda t}\,d\lambda.
\end{align*}
Since $2i\sin(\lambda t)=e^{i\lambda t}-e^{-i\lambda t}$ and $G(\lambda,x)$ is even in $\lambda$ we obtain:
\begin{align*}
\int_0^{\infty}G(\lambda,x)\frac{\sin(\lambda t)}{\lambda}\,d\lambda=&\frac{1}{2i}\Big(\int_0^{\infty} G(\lambda,x)\frac{e^{i\lambda t}}{\lambda}d\lambda-\int_0^{\infty} G(\lambda,x)\frac{e^{-i\lambda t}}{\lambda}d\lambda\Big)\\
=&\frac{1}{2i}\Big(\int_0^{\infty} G(\lambda,x)\frac{e^{i\lambda t}}{\lambda}\,d\lambda+\int_{-\infty}^{0} G(\lambda,x)\frac{e^{i\lambda t}}{\lambda}\,d\lambda\Big)\\
=&\frac{1}{2}\int_{-\infty}^{\infty} G(\lambda,x)\frac{e^{i\lambda t}}{i\lambda}d\lambda.
\end{align*}
\end{proof}
By  \cite[Prop.6.1.1 and Prop. 6.1.4]{trimeche2018generalized} and (\ref{defi:ef}) we have the following bounds for the radial eigenfunctions of the Laplacian: 
\begin{lemma}\label{lemma:bounds}
For all $x,\sigma\in X$ and $\lambda\in \C$ we have:
\begin{enumerate}
\item$ \lvert \varphi_{\lambda,\sigma}(x)\rvert\leq\varphi_{i\operatorname{Im}(\lambda),\sigma}(x)\leq \varphi_{0,\sigma}(x)\cdot e^{\lvert \operatorname{Im}(\lambda)\rvert d(\sigma,x)}$,
\item $\lvert \operatorname{Im}(\lambda)\rvert \leq\rho \Rightarrow e^{(\lvert \operatorname{Im}(\lambda)\rvert -\rho)d(\sigma,x)}\leq \varphi_{i\operatorname{Im}(\lambda),\sigma}(x)\leq 1$,
\item $\lvert \operatorname{Im}(\lambda)\rvert \geq\rho \Rightarrow 1\leq \varphi_{i\operatorname{Im}(\lambda),\sigma}(x)\leq e^{(\lvert \operatorname{Im}(\lambda)\rvert -\rho)d(\sigma,x)}$.
\end{enumerate}
Furthermore, we have:
\begin{align*}
\varphi_{i\operatorname{Im}(\lambda),\sigma}(x)\leq k(1+d(\sigma,x))e^{(\lvert \operatorname{Im}(\lambda)\rvert -\rho)d(\sigma,x)}
\end{align*}
 for  some positive constant $k>0$. 
 \end{lemma}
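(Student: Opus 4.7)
The plan is to reduce the statement to bounds on the one-dimensional hypergroup eigenfunction $\varphi_\lambda$ via the identification (\ref{defi:ef}), to derive part (1) directly from the integral representation (\ref{radialeigen}), and to invoke the Ch\'ebli--Trim\`eche Sturm--Liouville theory of \cite{trimeche2018generalized} for parts (2), (3), and the refined bound.

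First, by (\ref{defi:ef}), $\varphi_{\lambda,\sigma}(x)$ depends on $x$ only through $r:=d(\sigma,x)$, so every inequality in the statement reduces to a scalar inequality for $\varphi_\lambda(r)$; and (\ref{eigende}) together with the linear independence of $\Phi_{\pm\lambda}$ and the normalisation $\varphi_\lambda(0)=1$ forces $\varphi_\lambda=\varphi_{-\lambda}$, which lets me freely replace $\operatorname{Im}(\lambda)$ by $|\operatorname{Im}(\lambda)|$ wherever that is convenient.

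For part (1), I would start from (\ref{radialeigen}) and estimate
\begin{align*}
|\varphi_{\lambda,\sigma}(x)| \leq \int_{\partial X} e^{(-\operatorname{Im}(\lambda)-\rho)\,B_{\xi,\sigma}(x)}\,d\mu_\sigma(\xi) = \varphi_{i\operatorname{Im}(\lambda),\sigma}(x),
\end{align*}
using $|e^{i\lambda t}|=e^{-\operatorname{Im}(\lambda)t}$ and recognising the right-hand side as the defining integral of the spherical function at imaginary spectral parameter. For the second inequality in (1), I would pull the factor $e^{-\operatorname{Im}(\lambda)\,B_{\xi,\sigma}(x)}$ out of the integral via the Lipschitz bound $|B_{\xi,\sigma}(x)|\leq d(\sigma,x)$, bounding it uniformly by $e^{|\operatorname{Im}(\lambda)|\,d(\sigma,x)}$; what remains is precisely $\varphi_{0,\sigma}(x)$.

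For (2), (3) and the concluding bound I would interpret $r\mapsto \varphi_{i\eta}(r)$ (with $\eta=|\operatorname{Im}(\lambda)|$) as the unique solution of the Sturm--Liouville initial-value problem $L_A u=(\eta^2-\rho^2)u$, $u(0)=1$, $u'(0)=0$, with $L_A$ as in (\ref{eq:A}). Under (C1)--(C4) this is exactly the Ch\'ebli--Trim\`eche setup, and the two-sided exponential estimates required are precisely the content of \cite[Prop.~6.1.1 and Prop.~6.1.4]{trimeche2018generalized}: the sign of $\eta^2-\rho^2$ dictates whether $\varphi_{i\eta}$ is bounded above by $1$ and below by $e^{(\eta-\rho)r}$ (the subcritical case $\eta\leq \rho$) or the reverse (the supercritical case $\eta\geq \rho$). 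I expect the main obstacle to be the refined bound $\varphi_{i\eta}(r)\leq k(1+r)e^{(\eta-\rho)r}$: at the threshold $\eta=\rho$ the pure exponential asymptotic (\ref{eq:symeigen}) degenerates, and a polynomial factor $1+r$ is forced by the behavior of the second independent solution of $L_A$; making this uniform in $\eta\geq 0$ is where the Harnack-type estimates of Trim\`eche do the real work.
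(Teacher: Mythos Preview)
Your proposal is correct and matches the paper's approach: the paper simply cites \cite[Prop.~6.1.1 and Prop.~6.1.4]{trimeche2018generalized} together with (\ref{defi:ef}) and states the lemma without further argument. Your direct derivation of part~(1) from the integral representation (\ref{radialeigen}) is a nice elaboration, but it is entirely consistent with---and essentially a special case of---the Trim\`eche estimates the paper invokes.
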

\begin{lemma}\label{lemma:Huy2}
Assume the assumptions of the Theorem \ref{thm:Huy}.
Let $f,g\in C^{\infty}_c(X)$ with support in the ball of radius $R>0$ around $\sigma\in X$ then $F$ and $G$ admit holomorphic extensions in $\lambda$ up to $\epsilon_{\max}$ and for every $N\in\N$
 we can find a constant $C_N$ such that for all $\lambda\in \C$ with $0\leq\operatorname{Im}\lambda \leq \epsilon<\epsilon_{\max}$ and $x\in X$
\begin{align*}
\lvert F(\lambda,x)\rvert &\leq C_N (\epsilon_{max}-\epsilon)^{-1}(1+\lvert \lambda\rvert)^{-N} e^{\epsilon d(x,\sigma) +R \epsilon} \\
\end{align*}
and
\begin{align*}
\lvert G(\lambda,x)\rvert &\leq C_N (\epsilon_{max}-\epsilon)^{-1}(1+\lvert \lambda\rvert)^{-N} e^{\epsilon d(x,\sigma) +R\epsilon}.
\end{align*}
Furthermore if $\operatorname{dim} X > 1$ we have that for every $N\in\N$ there is a constant $D_N$ such that 
$$\lvert \lambda^{-1} G(\lambda,x)\rvert \leq D_N (\epsilon_{max}-\epsilon)^{-1}(1+\lvert \lambda\rvert)^{-N} e^{\epsilon d(x,\sigma) +R \epsilon }.$$
\end{lemma}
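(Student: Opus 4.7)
The plan is to combine three ingredients: the Paley--Wiener decay on $\tilde f^\sigma$ and $\tilde g^\sigma$ from Theorem \ref{thm:PW2}, the eigenfunction bound in Lemma \ref{lemma:bounds}, and the hypothesised meromorphic structure of $\eta$ on the strip. Since $\eta(\lambda)$ is independent of $\xi$ it pulls out of the $\partial X$-integral in $F$. By Theorem \ref{thm:PW2}, $\tilde f^\sigma(\lambda,\xi)$ is entire in $\lambda$ with rapid polynomial decay uniform in $\xi\in\partial X$, and by hypothesis $\eta$ is holomorphic on $\{0\le\operatorname{Im}\lambda<\epsilon_{\max}\}$; together with compactness of $\partial X$ this gives absolute convergence of the integral defining $F$ uniformly on compact subsets of the strip, and Morera's theorem yields the holomorphic extension. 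The same reasoning applies to $G$.

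For the size estimates, fix $\lambda$ with $\operatorname{Im}\lambda=\epsilon'\in[0,\epsilon]$. Theorem \ref{thm:PW2} gives, for each $M\in\N$, a constant $C_M>0$ with $|\tilde f^\sigma(\lambda,\xi)|\le C_M(1+|\lambda|)^{-M}e^{R\epsilon}$ uniformly in $\xi$. Pulling this bound out of the integral and using (\ref{radialeigen}) at parameter $i\epsilon'$,
\begin{align*}
\int_{\partial X}|e^{(i\lambda-\rho)B_{\xi,\sigma}(x)}|\,d\mu_\sigma(\xi)=\int_{\partial X}e^{-(\epsilon'+\rho)B_{\xi,\sigma}(x)}\,d\mu_\sigma(\xi)=\varphi_{i\epsilon',\sigma}(x).
\end{align*}
By Lemma \ref{lemma:bounds}, $\varphi_{i\epsilon',\sigma}(x)\le k(1+d(\sigma,x))e^{(\epsilon'-\rho)d(\sigma,x)}\le kK_\rho\,e^{\epsilon d(\sigma,x)}$, where $K_\rho:=\sup_{r\ge0}(1+r)e^{-\rho r}<\infty$. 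The structural hypothesis on the $\mathbf{c}$-function supplies a constant $C_\eta$ and an integer $m$ with
\begin{align*}
|\eta(\lambda)|\le\frac{C_\eta(1+|\lambda|)^m}{\epsilon_{\max}-\epsilon}
\end{align*}
on the substrip $\{0\le\operatorname{Im}\lambda\le\epsilon\}$; the numerator reflects the polynomial growth of $\eta$ and the denominator comes from $|\lambda-i\epsilon_{\max}|\ge\epsilon_{\max}-\epsilon$ near the simple pole at $i\epsilon_{\max}$. Choosing $M=N+m$ absorbs the polynomial factor into $(1+|\lambda|)^{-N}$, yielding the claimed bound on $F$. The identical argument with $\tilde g^\sigma$ in place of $\tilde f^\sigma$ gives the bound on $G$.

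For $\lambda^{-1}G(\lambda,x)$ the factorisation $\eta(\lambda)=\lambda^{n-1}\eta_0(\lambda)$ with $n=\dim X\ge 2$ shows that $\eta(\lambda)/\lambda=\lambda^{n-2}\eta_0(\lambda)$ is holomorphic on the whole strip: no new singularity appears at $\lambda=0$, and the polynomial bound gains a factor $(1+|\lambda|)^{-1}$. Replacing $\eta$ by $\eta/\lambda$ in the integrand defining $G$ and repeating the previous estimate delivers the asserted bound on $\lambda^{-1}G(\lambda,x)$. The main technical point I expect to have to nail down is the explicit polynomial-plus-simple-pole bound on $\eta$ on the closed substrip: extracting the precise dependence on $(\epsilon_{\max}-\epsilon)^{-1}$ from the somewhat informal hypothesis on the $\mathbf{c}$-function (and ensuring the constant is independent of $\epsilon$) is the one place where care is needed; once this is in hand, the rest is a clean balancing of Paley--Wiener decay against spherical function growth.
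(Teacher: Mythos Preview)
Your proposal is correct and follows essentially the same route as the paper: Paley--Wiener decay from Theorem~\ref{thm:PW2}, the spherical-function bound from Lemma~\ref{lemma:bounds}, and the polynomial-plus-simple-pole control on $\eta$ combine exactly as you describe. The only cosmetic difference is that the paper bounds $\big|\int_{\partial X}e^{(i\lambda-\rho)B_{\xi,\sigma}(x)}\,d\mu_\sigma\big|=|\varphi_{\lambda,\sigma}(x)|$ directly via part~(1) of Lemma~\ref{lemma:bounds} (giving $\le e^{|\operatorname{Im}\lambda|\,d(x,\sigma)}$), whereas you first move the modulus inside to obtain $\varphi_{i\epsilon',\sigma}(x)$ and then invoke the last estimate of that lemma; both routes yield the same exponential factor, and your explicit absorption of the polynomial part of $\eta$ via $M=N+m$ is in fact more transparent than the paper's treatment.
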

\begin{proof}
That $F,G$ are holomorphic up to $\epsilon_{max}$ in $\lambda$ follows from the fact that all functions making up those are holomorphic up to this point. 
Let us begin with the estimate on $F$ the one on $G$ follows in the same manner. 
\begin{align*}
\left\vert F(\lambda,x)\right\vert&\leq \Big\lvert\int_{\partial X} \tilde{f}^{\sigma}(\lambda,\xi)e^{(i\lambda -\rho)B_{\xi,\sigma}(x)}\eta(\lambda)\,d\mu_{\sigma}(\xi)\Big\rvert\\
&\leq \sup_{\operatorname{Im}\lambda <\epsilon_{\max},\,\xi\in\partial X}\lvert \tilde{f}^{\sigma}(\lambda,\xi)\eta(\lambda)\rvert \Big\lvert \int_{\partial X} e^{(i\lambda -\rho)B_{\xi,\sigma}(x)}\,d\mu_{\sigma}(\xi)\Big\rvert.\\
\end{align*}
By Lemma \ref{lemma:bounds} (1) and the integral representation of the radial eigenfunctions (\ref{radialeigen}):
\begin{align*}
 \Big\lvert \int_{\partial X} e^{(i\lambda -\rho)B_{\xi,\sigma}(x)}\,d\mu_{\sigma}(\xi)\Big\rvert&=\lvert\varphi_{\lambda,\sigma}(x)\rvert\\
& \leq \lvert\varphi_{i\operatorname{Im}\lambda}(x)\rvert\\
& \leq \lvert \varphi_{0,\sigma}(x)\rvert e^{\lvert \operatorname{Im}\lambda\rvert d(x,\sigma)}\\
& \leq e^{\lvert \operatorname{Im}\lambda\rvert d(x,\sigma)}.
\end{align*}
Now using Theorem \ref{thm:PW2}, the assumption that $\eta$ has a singular pole at $\epsilon_{max}$ and is a polynomial and since $\partial X$ is compact we can conclude that for every $N\in \N$ there is a constant $C_N$ such that for all $0\leq\operatorname{Im}\lambda \leq \epsilon<\epsilon_{\max}$
\begin{align*}
\lvert F(\lambda,x)\rvert &\leq C_N (\epsilon_{max}-\epsilon)^{-1}(1+\lvert \lambda\rvert)^{-N} e^{\epsilon d(x,\sigma) +R\lvert \operatorname{Im} \lambda\rvert}\\
&\leq C_N (\epsilon_{max}-\epsilon)^{-1}(1+\lvert \lambda\rvert)^{-N} e^{\epsilon d(x,\sigma) +R\epsilon}.
\end{align*}
For the last estimate on $\lvert \lambda^{-1} G(\lambda,x)\rvert$ one only need to consider  that $\eta(\lambda)=\lambda^{n-1}\eta_0(\lambda)$ where all poles of $\eta$ are also poles of $\eta_0$ with the same multiplicity. Hence one only need to exclude the case where $\operatorname{dim}X=1$. Then we get using the same lines as above:
\begin{align*}
\lvert  \lambda^{-1} G(\lambda,x)\rvert&\leq \Big\lvert\int_{\partial X}\lambda^{-1}  \tilde{g}^{\sigma}(\lambda,\xi)e^{(i\lambda -\rho)B_{\xi,\sigma}(x)}\eta(\lambda)\,d\mu_{\sigma}(\xi)\Big\rvert\\
&\leq \sup_{\operatorname{Im}\lambda <\epsilon_{\max},\,\xi\in\partial X}\lvert \lambda^{-1} \tilde{g}^{\sigma}(\lambda,\xi)\eta(\lambda)\rvert \Big\lvert \int_{\partial X} e^{(i\lambda -\rho)B_{\xi,\sigma}(x)}\,d\mu_{\sigma}(\xi)\Big\rvert\\
&\leq \sup_{\operatorname{Im}\lambda <\epsilon_{\max},\,\xi\in\partial X}\Big(\lvert \lambda^{n-2} \tilde{g}^{\sigma}(\lambda,\xi)\eta_0(\lambda)\rvert \\
&\quad\quad\quad\cdot\Big\lvert \int_{\partial X} e^{(i\lambda -\rho)B_{\xi,\sigma}(x)}\,d\mu_{\sigma}(\xi)\Big\rvert \Big)
\end{align*}
and then again use the estimate 
$$  \Big\lvert \int_{\partial X} e^{(i\lambda -\rho)B_{\xi,\sigma}(x)}\,d\mu_{\sigma}(\xi)\Big\rvert\leq e^{\lvert \operatorname{Im}\lambda\rvert d(x,\sigma)}.$$
Hence we obtain using the same arguments as above that for every $N\in\N$ there is a constant $D_N$ such that for $0\leq\operatorname{Im}\lambda \leq \epsilon<\epsilon_{\max}$
$$\lvert \lambda^{-1} G(\lambda,x)\rvert \leq D_N (\epsilon_{max}-\epsilon)^{-1}(1+\lvert \lambda\rvert)^{-N} e^{\epsilon d(x,\sigma) +R \epsilon }.$$
\end{proof}
\begin{proof}[Proof Theorem \ref{thm:Huy}]
First we note that $u(x,-t)$ solves the shifted wave equation with initial conditions $f,-g$ hence we only need to consider the case $t\geq 0$.
Let $0<\epsilon<\epsilon_{max}$ then using Lemma \ref{lemma:contur} we can move the integral defining $u$ from $\R$ to $\R+i\epsilon$, hence:
\begin{align*}
2\lvert \varphi(x,t)\rvert &=\Big\lvert C_0\int_{-\infty}^{\infty}\big(F(\lambda,x)+\frac{G(\lambda,x}{i\lambda}\big) e^{i\lambda t}\,d\lambda\Big\rvert\\
&=\Big\lvert C_0 e^{-\epsilon t} \int_{-\infty}^{\infty}\big(F(a+i\epsilon,x)+\frac{G(a+i\epsilon)}{i(a+i\lambda)}\big ) e^{iat}\,d\lambda\Big\rvert,
\end{align*}
now using Lemma \ref{lemma:Huy2} we obtain for $N\in \N$ a constant $C_N>0$ such that:
$$2\lvert \varphi(x,t)\rvert \leq C_N(\epsilon_{max}-\epsilon)^{-1}e^{-\epsilon(t-d(x,\sigma))}e^{R\epsilon} \int_{-\infty}^{\infty}(1+\lvert \lambda \rvert )^{-N}\,d\lambda.$$
Since the last integral is bounded we obtain the claim. For the case that the $\mathbf{c}$-function is an entire function and a polynomial one notice that we can ignore the therm $(\epsilon_{max}-\epsilon)^{-1}$ in all the estimates which yields the assertion in this case. 
\end{proof}
\section{Equidistribution of Energy} 
Under the same assumptions on the $\mathbf{c}$-function as in the last section we now want to proof an asymptotic equidistribution of the energy between the kinetic and potential energy of a wave on $X$. 
\begin{satz}\label{thm:energy2}
Let $(X,g)$ be a non compact simply connected harmonic manifold of rank one, such that the $\mathbf{c}$-function satisfies the mentioned in the beginning of section 9. And let $\varphi$ be a solution of the shifted wave equation with smooth initial conditions $f,g$ compactly supported within a ball of radius $R$ around $\sigma\in X$. Let $\epsilon_{max}$ be as before and $0<\epsilon<\epsilon_{max}<\infty$ then there is a constant $C>0$ such that we have for the potential and kinetic energy $\mathcal{P}$ and $\mathcal{K}$
$$\lvert \mathcal{K}(\varphi)(t)-\mathcal{P}(\varphi)(t)\rvert\leq C (\epsilon_{max}-\epsilon)^{-1}(e^{-2\epsilon(\lvert t\rvert -R)})\quad\forall t\in \R$$
and if $\epsilon_{max}=\infty$ we have
$$\mathcal{K}(\varphi)(t)=\mathcal{P}(\varphi)(t)\quad \forall \lvert t\rvert \geq R.$$
\end{satz}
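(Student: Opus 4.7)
The plan is to imitate the proof of Theorem~\ref{thm:Huy}: I shall rewrite $2(\mathcal{K}-\mathcal{P})(t)$ as a single exponential integral over $\R$ whose contour can be pushed upward into the strip of holomorphy of $\eta$, producing a factor $e^{-2\epsilon t}$. Starting from Lemma~\ref{lemma:energy}, expanding the two squared moduli and using $\cos^{2}-\sin^{2}=\cos(2\,\cdot\,)$ and $2\sin\cos=\sin(2\,\cdot\,)$ gives
\[
2(\mathcal{K}-\mathcal{P})(t)=-C_{0}\!\int_{0}^{\infty}\!\!\int_{\partial X}\!\!\bigl[(\lambda^{2}|\tilde f^{\sigma}|^{2}-|\tilde g^{\sigma}|^{2})\cos(2\lambda t)+2\lambda\operatorname{Re}(\tilde f^{\sigma}\overline{\tilde g^{\sigma}})\sin(2\lambda t)\bigr]\,d\mu_{\sigma}\,\eta(\lambda)d\lambda.
\]
Using $\overline{\tilde f^{\sigma}(\lambda,\xi)}=\widetilde{\overline f}^{\sigma}(-\lambda,\xi)$ for real $\lambda$ (and the analogue for $g$), the non-holomorphic factors $|\tilde f|^{2}$, $|\tilde g|^{2}$, $\operatorname{Re}(\tilde f\overline{\tilde g})$ admit natural holomorphic extensions to all $\lambda\in\C$ given by
\begin{align*}
A(\lambda)&:=\int_{\partial X}\!\bigl[\lambda^{2}\tilde f^{\sigma}(\lambda,\xi)\widetilde{\overline f}^{\sigma}(-\lambda,\xi)-\tilde g^{\sigma}(\lambda,\xi)\widetilde{\overline g}^{\sigma}(-\lambda,\xi)\bigr]d\mu_{\sigma}(\xi),\\
B(\lambda)&:=\lambda\!\int_{\partial X}\!\bigl[\tilde f^{\sigma}(\lambda,\xi)\widetilde{\overline g}^{\sigma}(-\lambda,\xi)+\widetilde{\overline f}^{\sigma}(-\lambda,\xi)\tilde g^{\sigma}(\lambda,\xi)\bigr]d\mu_{\sigma}(\xi).
\end{align*}

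The crucial, and I expect technically most delicate, step is the parity claim: $A$ must be even and $B$ odd in $\lambda$. For any $u,v\in C_{c}^{\infty}(X)$, Fubini yields
\[
\int_{\partial X}\tilde u(\lambda,\xi)\tilde v(-\lambda,\xi)\,d\mu_{\sigma}(\xi)=\eta(\lambda)^{-1}\!\int_{X}v(y)F_{u}(\lambda,y)\,dy,
\]
with $F_{u}(\lambda,y):=\eta(\lambda)\!\int_{\partial X}\!\tilde u(\lambda,\xi)e^{(i\lambda-\rho)B_{\xi,\sigma}(y)}d\mu_{\sigma}(\xi)$. The argument of Lemma~\ref{lemma:Huy1} applied to arbitrary $u\in C_{c}^{\infty}(X)$ (its proof invokes only Proposition~\ref{lemma:even} and the evenness of $\eta$ from Remark~\ref{bem:ceven}) shows that $F_{u}$ is even in $\lambda$; since $\eta$ is also even, the right-hand side above is even in $\lambda$. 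Applying this to the four pairings $(u,v)\in\{(f,\overline f),(g,\overline g),(f,\overline g),(g,\overline f)\}$ delivers the asserted parities of $A$ and $B$. The full integrand in $\lambda$ is then even, and the exponential-repacking manipulation of Lemma~\ref{lemma:Huy1} collapses the $\cos$/$\sin$ pair into a single exponential,
\[
2(\mathcal{K}-\mathcal{P})(t)=-\tfrac{C_{0}}{2}\int_{-\infty}^{\infty}\!\!(A(\lambda)-iB(\lambda))\,e^{2i\lambda t}\,\eta(\lambda)\,d\lambda.
\]

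With this representation the estimate is a near-verbatim repeat of the proof of Theorem~\ref{thm:Huy}. I reduce to $t\geq 0$ via the substitution $\varphi(x,t)\mapsto\varphi(x,-t)$, which solves the shifted wave equation with data $(f,-g)$ and has identical $|\mathcal{K}-\mathcal{P}|$. Theorem~\ref{thm:PW2} applied to $f,\overline f,g,\overline g\in C_{c}^{\infty}(X)$, combined with the assumed polynomial growth and simple pole of $\eta$ at $i\epsilon_{max}$, yields for every $N\in\N$ and every $\lambda$ with $0\leq\operatorname{Im}\lambda\leq\epsilon<\epsilon_{max}$
\[
|(A(\lambda)-iB(\lambda))\eta(\lambda)|\leq C_{N}(\epsilon_{max}-\epsilon)^{-1}(1+|\lambda|)^{-N}e^{2R\epsilon}.
\]
Lemma~\ref{lemma:contur} then legitimises the shift of contour from $\R$ to $\R+i\epsilon$; the ensuing factor $|e^{2i\lambda t}|=e^{-2\epsilon t}$ together with the integrable bound above gives
\[
|\mathcal{K}(\varphi)(t)-\mathcal{P}(\varphi)(t)|\leq C(\epsilon_{max}-\epsilon)^{-1}e^{-2\epsilon(|t|-R)}.
\]
When $\epsilon_{max}=\infty$ the prefactor $(\epsilon_{max}-\epsilon)^{-1}$ is absent, so for $|t|>R$ I may let $\epsilon\to\infty$ and conclude $\mathcal{K}(\varphi)(t)=\mathcal{P}(\varphi)(t)$.
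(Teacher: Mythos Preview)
Your proof is correct and follows essentially the same route as the paper: the paper likewise expands $\mathcal{K}-\mathcal{P}$ via double-angle identities (its Lemma~\ref{lemma:energy1}), defines analogous functions $A,B$ (with $\eta$ absorbed and without the factor $\lambda$ in $B$), proves their evenness via the identity $\overline{\tilde h^{\sigma}}(\lambda,\xi)=\widetilde{\bar h}^{\sigma}(-\lambda,\xi)$ together with Proposition~\ref{lemma:even} (packaged as Lemma~\ref{lemma:lemma}), and then shifts the contour exactly as you do. Your parity argument via $F_u$ is just a repackaging of the paper's Lemma~\ref{lemma:lemma}(ii), and the remaining estimates and contour shift are identical.
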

The proof is similar to the proof of Theorem \ref{thm:Huy}. Let us begin with calculating the different between the kinetic and potential energy.

\begin{lemma}\label{lemma:energy1}
Let $\varphi$ be a solution of the shifted wave equation with initial conditions $f,g\in C^{\infty}_c(X)$ then:
\begin{align*}
\frac{2}{C_0}\Big( \mathcal{K}(\varphi)(t)-\mathcal{P}(\varphi)(t)\Big )=&\int_0^{\infty}\int_{\partial X}\Big (\big(-\lambda^2 \tilde{f}^{\sigma}(\lambda,\xi)\overline{\tilde{f}^{\sigma}}(\lambda,\xi)\\
&+\tilde{g}^{\sigma}(\lambda,\xi)\overline{\tilde{g}^{\sigma}}(\lambda,\xi)\big)\cos(2\lambda t)\\
&-\big(\tilde{f}^{\sigma}(\lambda,\xi)\overline{\tilde{g}^{\sigma}}(\lambda,\xi)\\
&+\tilde{g}^{\sigma}(\lambda,\xi)\overline{\tilde{f}^{\sigma}}(\lambda,\xi)\big )\\&\cdot\lambda\sin(2\lambda t)\Big)
 d\mu_{\sigma}\lvert c(\lambda)\rvert^{-2}d\lambda.
\end{align*}
\end{lemma}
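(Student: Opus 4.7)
The plan is to start from the explicit Fourier-side formulas for $2\mathcal{K}(\varphi)(t)$ and $2\mathcal{P}(\varphi)(t)$ provided by Lemma \ref{lemma:energy}, and simply subtract them. Since the measure $C_0 \lvert \mathbf{c}(\lambda)\rvert^{-2}\,d\mu_\sigma(\xi)\,d\lambda$ is common, the problem reduces to simplifying the pointwise integrand
\[
\bigl\lvert -\lambda \widetilde{f}^\sigma \sin(\lambda t)+\widetilde{g}^\sigma\cos(\lambda t)\bigr\rvert^2 - \bigl\lvert \lambda \widetilde{f}^\sigma \cos(\lambda t)+\widetilde{g}^\sigma\sin(\lambda t)\bigr\rvert^2.
\]

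First I would expand each modulus squared using $|a+b|^2 = \lvert a\rvert^2 + \lvert b\rvert^2 + a\overline{b}+\overline{a}b$, remembering that $\lambda, t$ are real so $\sin(\lambda t)$ and $\cos(\lambda t)$ factor out of the conjugation. The first term yields
\[
\lambda^2\lvert \widetilde{f}^\sigma\rvert^2\sin^2(\lambda t)+\lvert \widetilde{g}^\sigma\rvert^2\cos^2(\lambda t)-\lambda\bigl(\widetilde{f}^\sigma\overline{\widetilde{g}^\sigma}+\overline{\widetilde{f}^\sigma}\widetilde{g}^\sigma\bigr)\sin(\lambda t)\cos(\lambda t),
\]
while the second yields
\[
\lambda^2\lvert \widetilde{f}^\sigma\rvert^2\cos^2(\lambda t)+\lvert \widetilde{g}^\sigma\rvert^2\sin^2(\lambda t)+\lambda\bigl(\widetilde{f}^\sigma\overline{\widetilde{g}^\sigma}+\overline{\widetilde{f}^\sigma}\widetilde{g}^\sigma\bigr)\sin(\lambda t)\cos(\lambda t).
\]

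Subtracting and collecting, the cross terms double, and the squared-trigonometric terms combine through the identities $\sin^2(\lambda t)-\cos^2(\lambda t)=-\cos(2\lambda t)$ and $2\sin(\lambda t)\cos(\lambda t)=\sin(2\lambda t)$. The result is exactly
\[
\bigl(-\lambda^2\widetilde{f}^\sigma\overline{\widetilde{f}^\sigma}+\widetilde{g}^\sigma\overline{\widetilde{g}^\sigma}\bigr)\cos(2\lambda t) - \lambda\bigl(\widetilde{f}^\sigma\overline{\widetilde{g}^\sigma}+\widetilde{g}^\sigma\overline{\widetilde{f}^\sigma}\bigr)\sin(2\lambda t),
\]
which after reinserting into the integral and dividing by $C_0$ gives the claimed formula.

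There is no real obstacle here: the statement is purely algebraic, following directly from Lemma \ref{lemma:energy} together with elementary double-angle identities. The only point requiring mild care is bookkeeping of complex conjugates to ensure the cross terms in the two expansions come with opposite signs so that their difference becomes $-2\lambda(\widetilde{f}^\sigma\overline{\widetilde{g}^\sigma}+\widetilde{g}^\sigma\overline{\widetilde{f}^\sigma})\sin(\lambda t)\cos(\lambda t)$ rather than canceling, and then to check that $\sin^2-\cos^2 = -\cos(2\cdot)$ carries the correct sign for both the $\lambda^2\lvert\widetilde{f}^\sigma\rvert^2$ and $\lvert\widetilde{g}^\sigma\rvert^2$ contributions. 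Once this is verified, the interchange of subtraction with the (absolutely convergent, by the Plancherel theorem) integrals is automatic and the proof is complete.
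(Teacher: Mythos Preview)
Your proposal is correct and takes essentially the same approach as the paper: both start from the two integrands in Lemma~\ref{lemma:energy}, expand the squared moduli, subtract, and apply the double-angle identities $\cos^2-\sin^2=\cos(2\cdot)$ and $2\sin\cos=\sin(2\cdot)$. Your write-up is in fact slightly more careful about the conjugate bookkeeping than the paper's own proof.
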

\begin{proof}
From Lemma \ref{lemma:energy} keeping the same notation:
\begin{align*}
(1)-(2)=&\lambda^2 \tilde{f}^{\sigma}(\lambda,\xi)\overline{\tilde{f}^{\sigma}}(\lambda,\xi)\sin^2(\lambda t)\\
&+\tilde{g}^{\sigma}(\lambda,\xi)\overline{\tilde{g}^{\sigma}}(\lambda,\xi)\cos^2(\lambda t)\\
&-2\lambda  \tilde{f}^{\sigma}(\lambda,\xi)\overline{\tilde{g}^{\sigma}}(\lambda,\xi)\sin(\lambda t)\cos(\lambda t)\\
&-2\lambda  \tilde{g}^{\sigma}(\lambda,\xi)\overline{\tilde{f}^{\sigma}}(\lambda,\xi)\sin(\lambda t)\cos(\lambda t)\\
&-\lambda^2 \tilde{f}^{\sigma}(\lambda,\xi)\overline{\tilde{f}^{\sigma}}(\lambda,\xi)\cos^2(\lambda t)\\
&-\tilde{g}^{\sigma}(\lambda,\xi)\overline{\tilde{g}^{\sigma}}(\lambda,\xi)\sin^2(\lambda t).
\end{align*}
Now using $\sin(x)\cos(x)=\frac{1}{2}\sin(2x)$ we obtain:
\begin{align*}
=&-\lambda^2 \tilde{f}^{\sigma}(\lambda,\xi)\overline{\tilde{f}^{\sigma}}(\lambda,\xi)\big(\cos^2(\lambda t)-\sin^2(\lambda t)\big)\\
&+\tilde{g}^{\sigma}(\lambda,\xi)\overline{\tilde{g}^{\sigma}}(\lambda,\xi)\big(\cos^2(\lambda t)-\sin^2(\lambda t)\big)\\
&-\lambda \big(\tilde{f}^{\sigma}(\lambda,\xi)\overline{\tilde{g}^{\sigma}}(\lambda,\xi)
+\tilde{g}^{\sigma}(\lambda,\xi)\overline{\tilde{f}^{\sigma}}(\lambda,\xi)\big )\sin(2\lambda t).
\end{align*}
Finally the claim follows from $\cos^2(x)-\sin^2(x)=\cos(2x)$.
\end{proof}
For us to be able to use the same arguments as in section 9 the following lemma is essential.  
\begin{lemma}\label{lemma:lemma}
Let $h_1,h_2\in C_c^{\infty}(X)$ and $\sigma\in X$ then for all $\lambda\in \R$ and $\xi \in \partial X$: 
\begin{enumerate}
\item $ \overline{\widetilde{h}^{\sigma}_1}(\lambda,\xi)=\widetilde{\bar{h}}_1^{\sigma}(-\lambda,\xi)$.
\item We have\\ 
$\int_{\partial X} \widetilde{h}^{\sigma}_1(\lambda,\xi) \overline{\widetilde{h}^{\sigma}_2}(\lambda,\xi)\,d\mu_{\sigma}(\xi)=\int_{\partial X} \widetilde{h}^{\sigma}_1(-\lambda,\xi) \overline{\widetilde{h}^{\sigma}_2}(-\lambda,\xi)\,d\mu_{\sigma}(\xi).$

\end{enumerate}
\end{lemma}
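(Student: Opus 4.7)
For part (1), I would compute directly from the definition. Since $\lambda\in\R$, $\rho\in\R$ and $B_{\xi,\sigma}(y)\in\R$, taking the complex conjugate of
\[
\widetilde{h}^{\sigma}_1(\lambda,\xi)=\int_X h_1(y)\,e^{(-i\lambda-\rho)B_{\xi,\sigma}(y)}\,dy
\]
immediately yields
\[
\overline{\widetilde{h}^{\sigma}_1}(\lambda,\xi)=\int_X \overline{h_1(y)}\,e^{(i\lambda-\rho)B_{\xi,\sigma}(y)}\,dy=\int_X \overline{h_1(y)}\,e^{(-i(-\lambda)-\rho)B_{\xi,\sigma}(y)}\,dy,
\]
which is exactly $\widetilde{\bar h_1}^{\sigma}(-\lambda,\xi)$. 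This is routine and presents no difficulty.

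For part (2) the plan is to exploit the evenness identity in Proposition~\ref{lemma:even} (applied to $h_1$) together with (1). Specifically, Proposition~\ref{lemma:even} gives, for every $z\in X$,
\[
\int_{\partial X}\widetilde{h}_1^{\sigma}(-\lambda,\xi)\,e^{(-i\lambda-\rho)B_{\xi,\sigma}(z)}\,d\mu_{\sigma}(\xi)=\int_{\partial X}\widetilde{h}_1^{\sigma}(\lambda,\xi)\,e^{(i\lambda-\rho)B_{\xi,\sigma}(z)}\,d\mu_{\sigma}(\xi).
\]
I would multiply this identity by $\overline{h_2(z)}$, integrate over $z\in X$, and interchange the order of integration (justified by Fubini since $h_2$ has compact support and $\partial X$ has finite measure). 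On the left, the inner $z$-integral becomes $\int_X\overline{h_2(z)}\,e^{(-i\lambda-\rho)B_{\xi,\sigma}(z)}\,dz=\widetilde{\bar h_2}^{\sigma}(\lambda,\xi)$, which by part (1) equals $\overline{\widetilde{h}_2^{\sigma}}(-\lambda,\xi)$. On the right, the inner $z$-integral becomes $\widetilde{\bar h_2}^{\sigma}(-\lambda,\xi)=\overline{\widetilde{h}_2^{\sigma}}(\lambda,\xi)$. Matching both sides produces exactly
\[
\int_{\partial X}\widetilde{h}^{\sigma}_1(-\lambda,\xi)\,\overline{\widetilde{h}^{\sigma}_2}(-\lambda,\xi)\,d\mu_{\sigma}(\xi)=\int_{\partial X}\widetilde{h}^{\sigma}_1(\lambda,\xi)\,\overline{\widetilde{h}^{\sigma}_2}(\lambda,\xi)\,d\mu_{\sigma}(\xi).
\]

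No step is really an obstacle; the only care required is verifying the applicability of Fubini. The key conceptual point is recognising that Proposition~\ref{lemma:even} is the \emph{right} symmetry to invoke: it converts an $(-\lambda,\xi)$-dependence on one factor into a $(\lambda,\xi)$-dependence on that factor (at the expense of swapping $e^{-i\lambda\cdot}$ and $e^{i\lambda\cdot}$), and (1) then absorbs the second factor into the complex conjugate of $\widetilde{h}_2^{\sigma}$ at the opposite sign of $\lambda$. Together these two steps deliver the claim without any new estimates.
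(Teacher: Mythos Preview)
Your proof is correct and uses the same ingredients as the paper's: part~(1), Proposition~\ref{lemma:even}, and Fubini. The only cosmetic difference is that the paper expands $\widetilde{h}_1^{\sigma}$ and applies Proposition~\ref{lemma:even} to $\bar h_2$, whereas you apply Proposition~\ref{lemma:even} directly to $h_1$ and integrate against $\overline{h_2}$; your organisation is in fact slightly cleaner.
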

\begin{proof}
For the first assertion we only need to look at the definition of the Fourier transform:
\begin{align*}
 \overline{\widetilde{h}^{\sigma}_1}(\lambda,\xi)&=\overline{\int_X h_1(x)e^{(-i\lambda-\rho)B_{\xi,\sigma}(x)}\,dx}\\
 &=\int_X \overline{h}_1(x)e^{(i\lambda-\rho)B_{\xi,\sigma}(x)}\,dx\\
 &=\widetilde{\bar{h}}_1^{\sigma}(-\lambda,\xi).
\end{align*}
The second assertion follows now from the first together with Proposition \ref{lemma:even}:
\begin{align*}
\int_{\partial X} \widetilde{h}^{\sigma}_1(\lambda,\xi) \overline{\widetilde{h}^{\sigma}_2}&(\lambda,\xi)\,d\mu_{\sigma}(\xi)\\
\overset{\text{Def.\ref{def:fourier}}}&{=}\int_{\partial X}\Big(\int_X h_1(x)e^{(-i\lambda-\rho)B_{\xi,\sigma}(x)}\,dx\Big) \overline{\widetilde{h}^{\sigma}_2}(\lambda,\xi)\,d\mu_{\sigma}(\xi)\\
&=\int_{\partial X}\int_X h_1(x) \overline{\widetilde{h}^{\sigma}_2}(\lambda,\xi)e^{(-i\lambda-\rho)B_{\xi,\sigma}(x)}\,dx\,d\mu_{\sigma}(\xi)\\
&=\int_X \int_{\partial X}h_1(x) \overline{\widetilde{h}^{\sigma}_2}(\lambda,\xi)e^{(-i\lambda-\rho)B_{\xi,\sigma}(x)}\,d\mu_{\sigma}(\xi)\,dx\\
&=\int_Xh_1(x) \int_{\partial X} \overline{\widetilde{h}^{\sigma}_2}(\lambda,\xi)e^{(-i\lambda-\rho)B_{\xi,\sigma}(x)}\,d\mu_{\sigma}(\xi)\,dx\\
\overset{\text{(Lemma \ref{lemma:lemma}(i)}}&{=}\int_X h_1(x) \int_{\partial X} \widetilde{\bar{h}}_2^{\sigma}(-\lambda,\xi)e^{(-i\lambda-\rho)B_{\xi,\sigma}(x)}\,d\mu_{\sigma}(\xi)\,dx\\
\overset{\text{Lemma \ref{lemma:even}}}&{=}\int_X h_1(x) \int_{\partial X} \widetilde{\bar{h}}_2^{\sigma}(\lambda,\xi)e^{(i\lambda-\rho)B_{\xi,\sigma}(x)}\,d\mu_{\sigma}(\xi)\,dx\\
&=\int_X \int_{\partial X}h_1(x) \widetilde{\bar{h}}_2^{\sigma}(\lambda,\xi)e^{(i\lambda-\rho)B_{\xi,\sigma}(x)}\,d\mu_{\sigma}(\xi)\,dx\\
&=\int_{\partial X}\int_X h_1(x) \widetilde{\bar{h}}_2^{\sigma}(\lambda,\xi)e^{(i\lambda-\rho)B_{\xi,\sigma}(x)}\,dx\,d\mu_{\sigma}(\xi)\\
&=\int_{\partial X} \widetilde{\bar{h}}_2^{\sigma}(\lambda,\xi)\int_X h_1(x)e^{(i\lambda-\rho)B_{\xi,\sigma}(x)}\,dx\,d\mu_{\sigma}(\xi)\\
\overset{\text{Def.\ref{def:fourier}}}&{=}\int_{\partial X} \widetilde{h}^{\sigma}_1(-\lambda,\xi)\widetilde{\bar{h}}_2^{\sigma}(\lambda,\xi)(\lambda,\xi)\,d\mu_{\sigma}(\xi)\\
\overset{\text{\ref{lemma:lemma}(i)}}&{=}\int_{\partial X} \widetilde{h}^{\sigma}_1(-\lambda,\xi) \overline{\widetilde{h}^{\sigma}_2}(-\lambda,\xi)\,d\mu_{\sigma}(\xi).
\end{align*}
Here the interchange of integrals is justified by the Fubini-Tonelli  theorem and the facts that $h_1$ and $h_2$ have compact support and $\partial X$ has finite measure ($d\mu_{\sigma}(\xi)$ is a probability measure). 
\end{proof}
\begin{lemma}\label{lemma:energy2}
Under the conditions of Theorem \ref{thm:energy2} define
\begin{align*}
A(\lambda)&:=\int_{\partial X}\big(-\lambda^2 \tilde{f}^{\sigma}(\lambda,\xi)\overline{\tilde{f}^{\sigma}}(\lambda,\xi)\\
&+\tilde{g}^{\sigma}(\lambda,\xi)\overline{\tilde{g}^{\sigma}}(\lambda,\xi)\big)\eta(\lambda)\,d\mu_{\sigma}(\xi)\\
\text{and}\\
B(\lambda)&:=\int_{\partial X} \big(\tilde{f}^{\sigma}(\lambda,\xi)\overline{\tilde{g}^{\sigma}}(\lambda,\xi)\\
&+\tilde{g}^{\sigma}(\lambda,\xi)\overline{\tilde{f}^{\sigma}}(\lambda,\xi)\big )\eta(\lambda)\,d\mu_{\sigma}(\xi).
\end{align*}
Then for $ \epsilon<\epsilon_{\max}$  we have
\begin{enumerate}
\item $A(\lambda)$ and $B(\lambda)$  admit a holomorphic extension up to $\operatorname{Im}\lambda=\epsilon$.
\item $A(\lambda)$ and $B(\lambda)$ are even.
\item For every $N\in\N$ there are constants $A_N$ and $B_N$ such that for every $\lambda\in \C$ with $ \lvert\operatorname{Im}\lambda\rvert \leq \epsilon<\epsilon_{\max}$ we have:
\begin{align*}
(i)\,\lvert A(\lambda)&\rvert \leq A_N(\epsilon_{max}-\epsilon)^{-1}(1+\lvert \lambda\rvert)^{-N}e^{2R\epsilon},\\
(ii)\,\lvert \lambda B(\lambda)&\rvert \leq B_N(\epsilon_{max}-\epsilon)^{-1}(1+\lvert \lambda\rvert)^{-N}e^{2R\epsilon}.
\end{align*}
\item We have for $\lvert \operatorname{Im}\lambda \rvert \leq \epsilon$:
\begin{align*}
\frac{4}{C_0}\Big( \mathcal{K}(\varphi)(t)-\mathcal{P}(\varphi)(t)\Big )=\int_{-\infty}^{\infty}\Big(A(\lambda)+i\lambda B(\lambda)\Big)e^{2i\lambda t}\,d\lambda.
\end{align*}
\end{enumerate}
\end{lemma}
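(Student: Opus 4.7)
The plan is to handle the four assertions in the order \emph{(2)--(1)--(3)--(4)}, the first three being preparatory and \emph{(4)} being the core identity needed for Theorem \ref{thm:energy2}. The central technical observation is that $\overline{\tilde f^\sigma}(\lambda,\xi)$ is not holomorphic in $\lambda$, so before anything I would invoke Lemma \ref{lemma:lemma}(i) to reinterpret the products appearing in the definitions of $A$ and $B$ as, e.g.,
\begin{align*}
\tilde f^\sigma(\lambda,\xi)\,\overline{\tilde f^\sigma}(\lambda,\xi)\ \leadsto\ \tilde f^\sigma(\lambda,\xi)\,\widetilde{\bar f}^\sigma(-\lambda,\xi),
\end{align*}
and similarly for the mixed and the $g$ terms; these reinterpreted expressions agree with the original ones for real $\lambda$ but are now manifestly holomorphic in $\lambda\in\C$ by Theorem \ref{thm:PW2}.

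With this setup, part \emph{(1)} is immediate: each factor is holomorphic on $\C$, $\eta(\lambda)$ is by hypothesis holomorphic on the open strip $0<\operatorname{Im}\lambda<\epsilon_{\max}$, and $\mu_\sigma$ has finite total mass so the $\partial X$-integration preserves holomorphicity. For \emph{(2)} I would pull the $\xi$-independent factor $\eta(\lambda)$ outside the boundary integral, apply Lemma \ref{lemma:lemma}(ii) to see that the remaining $\partial X$-integral is invariant under $\lambda\mapsto-\lambda$, and then combine with the evenness of $\eta$ from Remark \ref{bem:ceven} to obtain $A(-\lambda)=A(\lambda)$ and $B(-\lambda)=B(\lambda)$.

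For \emph{(3)(i)} I would apply Theorem \ref{thm:PW2} to each of the four Fourier factors $\tilde f^\sigma(\lambda,\xi)$, $\widetilde{\bar f}^\sigma(-\lambda,\xi)$, $\tilde g^\sigma(\lambda,\xi)$, $\widetilde{\bar g}^\sigma(-\lambda,\xi)$, bounding each by $C_N(1+|\lambda|)^{-N}e^{R|\operatorname{Im}\lambda|}$. Multiplying two such bounds produces the $e^{2R|\operatorname{Im}\lambda|}\leq e^{2R\epsilon}$ term; compactness of $\partial X$ absorbs the $\mu_\sigma$-integration; and the singular pole of $\eta$ at $\operatorname{Im}\lambda=\epsilon_{\max}$ supplies the $(\epsilon_{\max}-\epsilon)^{-1}$ factor, while the polynomial growth of $\eta$ is harmlessly absorbed by taking $N$ large. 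For \emph{(3)(ii)} I would use the factorisation $\eta(\lambda)=\lambda^{n-1}\eta_0(\lambda)$ (requiring $\dim X>1$) to absorb the explicit factor $\lambda$ in $\lambda B(\lambda)$ into $\eta$, after which the same estimate applies.

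Finally for \emph{(4)} I would substitute Lemma \ref{lemma:energy1} and convert cosines and sines to complex exponentials exactly as in Lemma \ref{lemma:Huy1}: since $A$ is even and $\lambda B(\lambda)$ is odd, the half-line integrals
\begin{align*}
\int_0^\infty A(\lambda)\cos(2\lambda t)\,d\lambda&=\tfrac{1}{2}\int_{-\infty}^\infty A(\lambda)e^{2i\lambda t}\,d\lambda,\\
\int_0^\infty \lambda B(\lambda)\sin(2\lambda t)\,d\lambda&=\tfrac{1}{2i}\int_{-\infty}^\infty \lambda B(\lambda)e^{2i\lambda t}\,d\lambda
\end{align*}
double up to full-line integrals, and combining them with the sign pattern of Lemma \ref{lemma:energy1} produces the stated formula; the bounds from \emph{(3)} together with Lemma \ref{lemma:contur} then make the contour available on any line $\R+i\epsilon'$ with $0\leq\epsilon'\leq\epsilon$, which is what Theorem \ref{thm:energy2} will exploit. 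The main obstacle is really the bookkeeping in the very first step: once $\overline{\tilde f^\sigma}(\lambda,\xi)$ has been correctly reinterpreted via Lemma \ref{lemma:lemma}(i) so that holomorphic extension and contour shifting are legal, everything else reduces to standard Paley-Wiener bookkeeping and the structural hypothesis on $\mathbf{c}$.
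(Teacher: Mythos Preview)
Your approach is essentially the same as the paper's: reinterpret the conjugates via Lemma \ref{lemma:lemma}(i) so that Corollary \ref{folg:PW} gives holomorphicity and the Paley--Wiener bounds, obtain evenness from Lemma \ref{lemma:lemma}(ii) together with Remark \ref{bem:ceven}, and deduce (4) from Lemma \ref{lemma:energy1} by the same symmetrisation as in Lemma \ref{lemma:Huy1}. One small remark: in (3)(ii) you are \emph{multiplying} by $\lambda$, not dividing, so the factorisation $\eta(\lambda)=\lambda^{n-1}\eta_0(\lambda)$ and the restriction $\dim X>1$ are unnecessary here---the extra polynomial factor is simply absorbed into the Paley--Wiener decay $(1+|\lambda|)^{-N}$.
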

\begin{proof}
 (1) is a direct consequent of the the first assertion from Lemma \ref{lemma:lemma} and Corollary \ref{folg:PW}. (3) also follows form  Corollary \ref{folg:PW} by the assumption on the $\mathbf{c}$-function. And if we have that $A$ and $B$ are even then also (4) follows with the same arguments as in Lemma \ref{lemma:Huy1}. Therefore all that remains to show is (2) but this follows immediately from Lemma \ref{lemma:lemma}.
\end{proof}

\begin{proof}[Proof Theorem \ref{thm:energy2}]
With the same argument as in Theorem \ref{thm:Huy} we can restrict ourselves to the case $t\geq 0$.
Let $0<\epsilon<\epsilon_{max}$ then we have by using Lemma \ref{lemma:contur} and shifting the integral to $\R+i\epsilon$:
\begin{align*}
&\Big\lvert \frac{4}{C_0}\Big(\mathcal{K}(\varphi)(t)-\mathcal{P}(\varphi)(t)\Big)\Big\rvert=\Big\lvert \int_{-\infty}^{\infty}\Big (A(\lambda)+i\lambda B(\lambda)\Big)e^{2i\lambda t}\,d\lambda\Big\rvert\\
&=\Big\lvert e^{-2\epsilon t}\int_{-\infty}^{\infty}\Big (A(a+i\epsilon)+i(a+i\epsilon)B(a+i\epsilon)\Big)e^{2iat}\,da\Big\rvert.
\end{align*}
Hence we obtain using the bounds form Lemma \ref{lemma:energy2} that for every $N\in \N$ there is a constant $C_N$ such that for all $\lambda\in \C$ with $\lvert\operatorname{Im} \lambda \rvert \leq\epsilon<\epsilon_{max}$ we have that the above is bounded by 
\begin{align*}
C_N(\epsilon_{max}-\epsilon)^{-1}e^{2R\epsilon}e^{-2\epsilon t}\int_{-\infty}^{\infty}(1+\lvert \lambda \rvert)^{-N}\,d\lambda \quad \forall t\geq 0. 
\end{align*}
And since the integral is bounded we get that there is a constant $C>0$ such that the above is bounded by:
\begin{align*}
C(\epsilon_{max}-\epsilon)^{-1}e^{-2\epsilon(\lvert t\rvert-R)}\quad \forall t\geq 0. 
\end{align*}
For the case that the $\mathbf{c}$-function is an entire function and a polynomial one notice that we can ignore the therm $(\epsilon_{max}-\epsilon)^{-1}$ in all the estimates and then we can let $\epsilon\to \infty$ which yields the the assertion. 
\end{proof}
\begin{bem}
Note that the assumption on the pole of $\eta$ to be of multiplicity one only effects the therm $(\epsilon_{max}-\epsilon)^{-1}$  so one could restate  Theorem \ref{thm:Huy} and Theorem \ref{thm:energy2} for $\eta$ to have a pole of multiplicity $n\in \N$ by raising the power to $-n$. But there are no known examples for this case, even for $\mathbf{c}$-functions on hypergroups. Hence we state our theorems in the realistic setting.
\end{bem}

\footnotesize
\bibliography{literature}
\bibliographystyle{alpha}

\end{document}